\title[Relative Proportionality]{Relative Proportionality for subvarieties of moduli spaces
of K3 and abelian surfaces} 
\author[S. M\"uller-Stach]{Stefan M\"uller-Stach}
\address{Universit\"at Mainz,
Fachbereich 17, Mathematik,
55099 Mainz, Germany}
\email{mueller-stach@uni-mainz.de}
\email{kzuo@mathematik.uni-mainz.de}
\author[E. Viehweg]{Eckart Viehweg}
\address{Universit\"at Duisburg-Essen, Mathematik, 45117 Essen, Germany}
\email{viehweg@uni-essen.de}
\author[K. Zuo]{Kang Zuo}
\thanks{This work has been supported by the DFG-Leibniz program and by the SFB/TR 45
``Periods, moduli spaces and arithmetic of algebraic varieties''.}
\dedicatory{To Friedrich Hirzebruch}
\begin{document}
%%%%%%%%%%%%%%%%%%%% Text italic %%%%%%%%%%%%%%%%%%%%%%%%%%%%
\theoremstyle{plain}
\newtheorem{thm}{Theorem}[section]
\newtheorem{theorem}[thm]{Theorem}
\newtheorem{addendum}[thm]{Addendum}
\newtheorem{lemma}[thm]{Lemma}
\newtheorem{lemnot}[thm]{Lemma and Notations}
\newtheorem{corollary}[thm]{Corollary}
\newtheorem{proposition}[thm]{Proposition}
%%%%%%%%%%%%%%%%%%%% Text roman %%%%%%%%%%%%%%%%%%%%%%%%%%%%%
\theoremstyle{definition}
\newtheorem{remark}[thm]{Remark}
\newtheorem{notations}[thm]{Notations}
\newtheorem{assnot}[thm]{Assumptions and Notations}
\newtheorem{remarks}[thm]{Remarks}
\newtheorem{definition}[thm]{Definition}
\newtheorem{claim}[thm]{Claim}
\newtheorem{assumption}[thm]{Assumption}
\newtheorem{assumptions}[thm]{Assumptions}
\newtheorem{properties}[thm]{Properties}
\newtheorem{property}[thm]{Property}
\newtheorem{example}[thm]{Example}
\newtheorem{basic_ex}[thm]{Basic Example}
\newtheorem{conjecture}[thm]{Conjecture}
\newtheorem{constr}[thm]{Construction}
\newtheorem{questions}[thm]{Questions}
\newtheorem{question}[thm]{Question}
\numberwithin{equation}{section}
%%%%%%%% Special symbols %%%%%%%%%%%%%%%%%%%%%%%%%%%%%%%
\newcommand{\sM}{{\mathcal M}}
\newcommand{\sF}{{\mathcal F}}
\newcommand{\sG}{{\mathcal G}}
\newcommand{\sL}{{\mathcal L}}
\newcommand{\bsM}{{\bar{\mathcal M}}}
\newcommand{\sN}{{\mathcal N}}
\newcommand{\sO}{{\mathcal O}}
\newcommand{\C}{{\mathbb C}}
\newcommand{\Q}{{\mathbb Q}}
\newcommand{\N}{{\mathbb N}}
\newcommand{\V}{{\mathbb V}}
\newcommand{\Z}{{\mathbb Z}}
\newcommand{\W}{{\mathbb W}}
\newcommand{\U}{{\mathbb U}}
\newcommand{\SO}{{\rm SO}}
\newcommand{\OO}{{\rm O}}
\newcommand{\SP}{{\rm SP}}
\newcommand{\UU}{{\rm U}}
\newcommand{\SU}{{\rm SU}}
\newcommand{\SL}{{\rm SL}}
\newcommand{\rk}{{\rm rk}}
\newcommand{\ch}{{\rm c}}
\begin{abstract}
The relative proportionality principle of Hirzebruch and H\"ofer was discovered in the case of compactified ball quotient surfaces $X$ when studying 
curves $C \subset X$. It can be expressed as an inequality which attains equality precisely when $C$ is an induced quotient of a subball.
A similar inequality holds for curves on Hilbert modular surfaces. In this paper we prove a generalization of this result to subvarieties of 
Shimura varieties of orthogonal type, i.e. locally symmetric spaces of type ${\mathcal M}=\Gamma \backslash \SO(n,2) \slash K$. 
Furthermore we study the ''inverse problem'' of deciding when an arbitrary subvariety $Z$ of ${\mathcal M}$ 
is of Hodge type, provided it contains sufficiently many divisors $W_i$ which are of Hodge type and satisfy relative proportionality.
\end{abstract}
\maketitle
Let $\sM$ denote a connected Shimura variety of Hodge type associated to a reductive Lie group $G \subset Sp_{2g}$ of Hermitian type defined over $\Q$. A subvariety of $\sM$ is called \textit{special} or a \textit{subvariety of Hodge type}, if it is 
induced by an algebraic subgroup $G_1 \hookrightarrow G$ of Hermitian type.
In particular zero dimensional special subvarieties are just the CM-points. 

As it is well known, a subvariety of Hodge type contains a dense set of CM-points.
The Andr\'e-Oort conjecture states the converse, hence that an irreducible variety $Z$ of $\sM$ is a  subvariety of Hodge type, if the CM points in $Z$ are Zariski dense. Recently Klingler and Yafaev~\cite{ky06} have given a proof of this conjecture, assuming the generalized Riemann Hypothesis. The Andr\'e-Oort conjecture implies immediately also that a subvariety $Z$ of $\sM$ which contains a Zariski dense set of subvarieties of Hodge type must itself be special. We will restrict ourselves to the moduli space $\sM$ of polarized K3 or abelian surfaces, more generally subvarieties of 
Shimura varieties of orthogonal type, i.e. locally symmetric spaces of type ${\mathcal M}=\Gamma \backslash \SO(n,2) \slash K$ with $\Gamma$ a neat arithmetic group. 
For $Z \subset \sM$ we will show that a ``big finite subset'' $\{W_i\}_{i\in I}$ of subvarieties of Hodge type of codimension one is sufficient to force $Z$ to be special. One way to formulate the bigness of the set of subvarieties would be to require the natural map
$$
\pi_1(\bigcup_{i\in I} W_i) \longrightarrow \pi_1(Z)
$$
to be surjective, for a suitable choice of base points. Instead we will consider certain compactifications $\bar{Z}$ of $Z$, and require $\# I$ to be large compared with its Picard number $\rho(\bar{Z})$ and with the
number $\delta(S_{\bar Z})$ of different two by two intersections $S_i\cap S_j$ of irreducible components
$S_i$ and $S_j$ of $S_{\bar Z}$.

A second aspect is the understanding of the ``relative proportionality'', a numerical condition satisfied by subvarieties of Hodge type $W$ of $Z$, provided the universal covering $\tilde{Z}$ of $Z$ is a bounded
symmetric domain. The proportionality principle has been established by Hirzebruch in \cite{hi58} 
for projective manifolds $Z$, and it has been generalized by Mumford in \cite{m77} to the quasi-projective case. To this aim, Mumford used a particularly nice toroidal compactification $\bar{Z}$ of $Z$, constructed in \cite{AMRT75} and extensions of the Hodge bundles to $\bar{Z}$. 
  
In the mid 1980's Hirzebruch and H\"ofer have obtained the relative proportionality inequality for an algebraic curve $\bar{C}$ on an algebraic surface $\bar{Y}$ with universal covering $\tilde{Y}$ a complex ball (see \cite[page 259 and 265]{bhh87}, for example).
A similar inequality holds for curves on Hilbert modular surfaces,
and for special curves the equality was already verified in \cite[\S 4]{hi73}.
\begin{theorem}[Hirzebruch \cite{hi73}; Hirzebruch, H\"ofer \cite{bhh87}]\label{HHP0}
Assume that $S_{\bar Y}=\bar{Y}\setminus Y$ is a strict normal crossing divisor. 
Then for a non-singular curve $\bar{C}\subset Y$ and for the reduced boundary divisor $S_{\bar C}=(\bar{C}\cap S_{\bar Y})_{\rm red}$ one has the relative proportionality inequality saying that
\begin{equation}\label{eqp1}
2\cdot \bar{C}.\bar{C}+2\cdot \deg(S_{\bar C}) \geq -K_{\bar{Y}}.\bar{C}+S_{\bar Y}.\bar{C},
\end{equation}
if $Y$ is a Hilbert modular surface, and
\begin{equation}\label{eqp2}
3\cdot \bar{C}.\bar{C}+3\cdot \deg(S_{\bar C}) \geq -K_{\bar{Y}}.\bar{C}+ 2 \cdot S_{\bar Y}.\bar{C},
\end{equation}
if $Y$ is a ball quotient.\\[.2cm]
If the compactification $\bar{Y}$ is a Mumford compactification, or more generally if
$\Omega^1_{\bar Y}(\log S_{\bar Y})$ is numerically effective (nef) and if
$\omega_{\bar Y}(S_{\bar Y})$ is ample with respect to $Y$, then
the equality in \eqref{eqp1} or in \eqref{eqp2} implies that $\tilde{C}$ is a complex subball of $\tilde{Y}$. 
\end{theorem}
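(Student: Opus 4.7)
The plan is to work throughout with the logarithmic cotangent bundle $\Omega^1_{\bar Y}(\log S_{\bar Y})$ and to reduce both inequalities to slope bounds along $\bar C$. After blowing up $\bar Y$ and replacing $\bar C$ by its strict transform (a move that preserves both sides of \eqref{eqp1} and \eqref{eqp2} after the standard bookkeeping of self-intersections and log-degrees), one may assume that $\bar C$ meets $S_{\bar Y}$ transversally. In this case the restricted conormal sequence
\[
0 \to N^*_{\bar C/\bar Y} \to \Omega^1_{\bar Y}(\log S_{\bar Y})|_{\bar C} \to \Omega^1_{\bar C}(\log S_{\bar C}) \to 0,
\]
together with adjunction $K_{\bar Y} \cdot \bar C + \bar C^2 = 2g(\bar C) - 2$, rewrites \eqref{eqp1} as $\deg N^*_{\bar C/\bar Y} \leq \tfrac{1}{2}(K_{\bar Y}+S_{\bar Y}) \cdot \bar C$ and \eqref{eqp2} as $\deg N^*_{\bar C/\bar Y} \leq \tfrac{1}{3}(K_{\bar Y}+S_{\bar Y}) \cdot \bar C$, i.e.\ slope bounds on the sub--line bundle $N^*_{\bar C/\bar Y}$ inside the rank-two bundle $\Omega^1_{\bar Y}(\log S_{\bar Y})|_{\bar C}$ of total degree $(K_{\bar Y}+S_{\bar Y}) \cdot \bar C$.

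For the Hilbert modular case, I would use the global decomposition $\Omega^1_{\bar Y}(\log S_{\bar Y}) = L_1 \oplus L_2$ on the toroidal compactification, coming from the two $\mathbb H$-factors, with $L_1, L_2$ nef of equal first Chern class. Each projection $N^*_{\bar C/\bar Y} \to L_i|_{\bar C}$ is either zero or an injection of line bundles on $\bar C$, and nefness then yields $\deg N^*_{\bar C/\bar Y} \leq \min_i \deg L_i|_{\bar C} \leq \tfrac{1}{2}(K_{\bar Y}+S_{\bar Y}) \cdot \bar C$, which is \eqref{eqp1}. For the ball quotient case, $\Omega^1_{\bar Y}(\log S_{\bar Y})$ does not split but is nef and satisfies Mumford's proportionality $c_1^2 = 3\,c_2$, coming from the $\SU(2,1)$-uniformization. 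I would pass to the orbifold $(\bar Y, S_{\bar Y} + (1-\tfrac{1}{n}) \bar C)$, or equivalently to a cyclic cover along $\bar C$ of ramification $n$, apply the logarithmic Miyaoka--Yau inequality on it, and let $n \to \infty$; the resulting limit of orbifold Chern numbers, combined with the proportionality constant $3$, produces precisely \eqref{eqp2}.

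For the equality statement, saturation of the slope bound forces $N^*_{\bar C/\bar Y}$ to reach extremal degree in $\Omega^1_{\bar Y}(\log S_{\bar Y})|_{\bar C}$. The hypotheses that $\Omega^1_{\bar Y}(\log S_{\bar Y})$ is nef and $\omega_{\bar Y}(S_{\bar Y})$ is ample modulo $Y$ then upgrade this to a holomorphic statement: in the Hilbert case the two projections $N^*_{\bar C/\bar Y} \to L_i|_{\bar C}$ are simultaneously isomorphisms, identifying $\tilde C$ with a diagonally embedded $\mathbb H$, while in the ball case the logarithmic Miyaoka--Yau equality, applied to the limiting orbifold data, forces the second fundamental form of $\bar C \subset \bar Y$ to vanish, so that $\tilde C$ is totally geodesic for the Bergman metric, hence a complex subball. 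The main obstacle I anticipate is this last step in the ball case: translating the numerical equality into the vanishing of a differential-geometric defect requires the Arakelov--Simpson style rigidity of Higgs subsheaves saturating a slope bound, together with the ampleness-modulo-$Y$ hypothesis, in order to exclude the possibility that the slack in the original inequality is concentrated along the boundary divisor $S_{\bar Y}$.
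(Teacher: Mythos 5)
Your Hilbert modular branch is essentially sound (min $\le$ average makes the asserted equality of $\ch_1(L_1)$ and $\ch_1(L_2)$ unnecessary), up to one unaddressed degenerate case: if one of the two projections $N^*_{\bar C/\bar Y}\to L_i|_{\bar C}$ vanishes, $\bar C$ is an algebraic leaf of one of the two tautological foliations and your bound $\deg N^*_{\bar C/\bar Y}\le \min_i\deg L_i|_{\bar C}$ no longer follows; you must exclude such leaves (true for an irreducible lattice, false on a product of curves, which is exactly why the paper separates off \eqref{eqp3}). The serious gap is the ball quotient case. Run your orbifold limit in the compact, transversal model: with branch datum $(1-\tfrac1n)\bar C$ the log Miyaoka--Yau inequality reads $(K_{\bar Y}+(1-\tfrac1n)\bar C)^2\le 3\bigl(e(\bar Y)-(1-\tfrac1n)e(\bar C)\bigr)$, and subtracting the proportionality $K_{\bar Y}^2=3e(\bar Y)$ and letting $n\to\infty$ gives $K_{\bar Y}.\bar C+2\bar C.\bar C\ge 0$, i.e. $-\bar C.\bar C\le \deg\Omega^1_{\bar C}(\log S_{\bar C})$. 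That is the \eqref{eqp1}-type bound, whereas \eqref{eqp2} is equivalent to $-\bar C.\bar C\le \tfrac12\deg\Omega^1_{\bar C}(\log S_{\bar C})$: the branched-cover Chern-number computation loses exactly the factor coming from the curvature ratio $1/2$ between normal and tangent directions in the ball (equivalently, from the rank of the piece of the Higgs bundle that $T_{\bar C}$ hits), so the proposed argument does not produce \eqref{eqp2}.

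The equality statements are also not established by what you write. In the Hilbert case, "both projections are isomorphisms" is a pointwise open condition satisfied by any local graph with nonvanishing derivative; it does not identify $\tilde C$ with a diagonal $\mathbb{H}$ without an additional rigidity input (a Schwarz/Gauss--Bonnet curvature comparison, or the splitting of a degree-zero Higgs subsheaf and its orthogonality for the Hodge metric). In the ball case you concede the step and point to "Arakelov--Simpson style rigidity" --- which is precisely the content you would still have to prove. For comparison, the paper obtains Theorem \ref{HHP0} as the case $n=2$, $d=1$ of Theorem \ref{HHP} and Addendum \ref{HHPAD}: the uniformizing weight-two (resp.\ weight-one) variation of Hodge structures is restricted to $\bar C$, the saturated Higgs subsheaf generated by $E^{2,0}$ has nonpositive degree by Simpson polystability (valid for the non-ample polarization $\omega_{\bar Y}(S_{\bar Y})$ by the nef/ample-with-respect-to-$Y$ hypothesis), and equality forces this subsheaf to split off, whence total geodesy via Lemma \ref{decomposition}; non-transversal boundary intersections are absorbed once and for all by the logarithmic normal sheaf and the Chern identity \eqref{tautchern}, so no blow-up reduction is needed. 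That machinery is exactly what closes both of the gaps above, and without it (or a genuine differential-geometric substitute) your proposal proves neither \eqref{eqp2} nor the equality assertions.
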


In \cite{bhh87} these inequalities are stated only in the case where $\bar{C}\cap S_{\bar Y}$ intersect transversally. Then they simplify to $2\cdot \bar{C}.\bar{C} \ge -(K_{\bar{Y}}+S_{\bar Y}).\bar{C}$ on Hilbert modular surfaces and $3\cdot \bar{C}.\bar{C} \ge -(K_{\bar{Y}}+S_{\bar Y}).\bar{C}$ on ball quotients.

\medskip
In Section \ref{HHPR} we will prove and generalize those inequalities to certain higher dimensional Shimura varieties $\sM$ which are uniformized by a variation of Hodge structures $\V$ of weight two.
Assuming that the local monodromies at infinity are unipotent, we consider to this aim the Higgs bundle $(E,\theta)$ induced by the Deligne extension of $\V$ to $\bsM$, as explained in
the Notations \ref{del}, and the corresponding Griffiths-Yukawa coupling. For the generalizations of Theorem \ref{HHP0}, stated in Theorem \ref{HHP} we will allow $\sM$ to be a Shimura variety of complex ball type, i.e. $\tilde{\sM}= \SU(n,1)/\UU(n)$, or of type $\SO(n,2)$, i.e. $\tilde{\sM}= \SO(n,2)/\OO(n) \times \UU(1)$, and we replace the curve $C$ by a submanifold $Z$. We will distinguish the different cases corresponding to \eqref{eqp1} and \eqref{eqp2} in Theorem \ref{HHP0}
by posing conditions on the Griffiths-Yukawa coupling. 

For example, a curve $C$ on a Hilbert modular surface $Y$ has 
Griffiths-Yukawa coupling $\theta_{\bar{C}}^{(2)} \neq 0$, whereas for curves in a ball quotient $Y$ it will vanish, since 
already $\theta_{\bar{Y}}^{(2)}=0$.
The remaining case, where the Griffiths-Yukawa coupling satisfies $\theta_{\bar{C}}^{(2)}=0$ but 
$\theta_{\bar{Y}}^{(2)}\neq 0$, only occurs on 
\textit{fake Hilbert modular surfaces}, i.e. on products $X=C_1 \times C_2$ of two curves of genus $g\geq 2$, and for
$C$ a fiber of one of the projections. So to handle this case we should add in Theorem \ref{HHP}:
\begin{equation}\label{eqp3}\hspace{.2cm}
\bar{C}.\bar{C}+\deg(S_{\bar C}) \geq S_{\bar{Y}}.\bar{C},\hspace{.2cm}
\mbox{ if } Y \mbox{ is a product } C_1 \times C_2.
\end{equation}
However the condition $\theta_{\bar C}^{(2)}=0$ on the product of two curves only occurs if
$\bar{C}$ is the fiber of one of the projections, and hence $\bar{C}.\bar{C}=0$ and
$\deg(S_{\bar C}) = S_{\bar{Y}}.\bar{C}$.

\medskip
Let us return to the problem of characterizing subvarieties of Hodge type in $\sM$ by the existence of a big set of special subvarieties. Starting with Section \ref{SSV} we will restrict ourselves to the case where $\sM$ is of type $\SO(n,2)$, and we will consider
$W\subset Z \subset \sM$, with $W$ a Shimura variety and $Z$ unknown. Doing so for Shimura curves $C$ on surfaces $Y\subset \sM$, we get similar expressions as \eqref{eqp1}, \eqref{eqp2} or \eqref{eqp3} with the sign reversed:
\begin{theorem}\label{HHP0R}
Assume that $\bar Y$ is a projective surface, $S_{\bar Y}$ a strict normal crossing divisor and
$Y= \bar{Y}\setminus S_{\bar Y}$, with $\Omega^1_{\bar Y}(\log S_{\bar Y})$ nef and $\omega_{\bar Y}(\log S_{\bar Y})$ ample with respect to $Y$.

Consider a Shimura curve $C\subset Y$ and a closed embedding $Y\subset \sM$. Assume (for simplicity) that 
the second embedding extends to $\bar{Y}\subset \bsM$ for a Mumford compactification $\bsM$ of $\sM$. If $C$ is a Shimura curve, 
then one has:
\begin{gather}\label{eqp4}
\hspace{1.3cm} 2\cdot \bar{C}.\bar{C}+2\cdot\deg(S_{\bar C}) \leq -K_{\bar{Y}}.\bar{C}+S_{\bar Y}.\bar{C} \hspace{1.8cm}
\mbox{ if }\theta_{\bar{C}}^{(2)} \neq 0.\\
\label{eqp5}
\hspace{1.3cm}
3\cdot \bar{C}.\bar{C}+3\cdot \deg(S_{\bar C}) \leq -K_{\bar{Y}}.\bar{C}+ 2 \cdot S_{\bar Y}.\bar{C}
\hspace{1.8cm}
\mbox{ if }\theta_{\bar Y}^{(2)} = 0. \\
\label{eqp6}
\hspace{2.3cm} \bar{C}.\bar{C}+\deg(S_{\bar C}) \leq S_{\bar{Y}}.\bar{C}
\hspace{1.6cm}
\mbox{ if } \theta_{\bar C}^{(2)}=0 \mbox{ but }\theta_{\bar{Y}}^{(2)} \neq 0.
\end{gather}
\end{theorem}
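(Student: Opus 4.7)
My plan is to apply the Higgs-theoretic argument underlying Theorem \ref{HHP} in two stages, obtaining the reversal of sign by replacing an Arakelov-Simpson inequality with the corresponding Shimura-curve \emph{equality}. The forward Hirzebruch-H\"ofer inequality is proved by combining the Arakelov bound for the Hodge line bundle $E^{2,0}|_{\bar C}$ on $\bar C$ with a positivity estimate for the Higgs field on $\bar Y$ derived from the nef-ness of $\Omega^1_{\bar Y}(\log S_{\bar Y})$. For a Shimura curve $C$, the first of these two bounds becomes an equality, and combining it with the second one produces the reversed inequalities \eqref{eqp4}--\eqref{eqp6}.

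First, I restrict the weight-two Higgs bundle $(E,\theta)$ on $\bar{\mathcal M}$ to $\bar Y$ and then to $\bar C$. The hypothesis that the embedding $Y\subset\mathcal M$ extends to $\bar Y\subset\bar{\mathcal M}$ for a Mumford compactification guarantees that the Deligne extensions are compatible, so the restricted Higgs datum on $\bar Y$ is the one produced by the induced VHS, and likewise on $\bar C$. Since $\mathcal M$ is of orthogonal type, the Hodge decomposition has shape $(1,n,1)$, so $E^{2,0}$ and $E^{0,2}$ are line bundles dual to each other under the polarization.

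Because $C\subset\mathcal M$ is itself a Shimura curve, the restricted VHS on $\bar C$ decomposes as a direct sum of irreducible sub-VHS. On the irreducible summand encoding the Shimura structure of $C$, the iterated Higgs field becomes an isomorphism up to an appropriate twist by $\omega_{\bar C}(S_{\bar C})$, giving an exact linear relation between $\deg E^{2,0}|_{\bar C}$ and $\deg\omega_{\bar C}(S_{\bar C})$. The proportionality coefficient depends on the case: the Hilbert-modular embedding (where $\theta^{(2)}_{\bar C}\neq 0$) and the ball embedding (where already $\theta^{(2)}_{\bar Y}=0$) produce different coefficients matching the left-hand sides of \eqref{eqp4} and \eqref{eqp5} respectively, while in the intermediate case $\theta^{(2)}_{\bar C}=0$ but $\theta^{(2)}_{\bar Y}\neq 0$ the Hodge line bundle $E^{2,0}|_{\bar C}$ becomes trivial, i.e.\ $\deg E^{2,0}|_{\bar C}=0$.

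Next, I combine this equality with Higgs-theoretic positivity on $\bar Y$. Under the relevant non-vanishing hypothesis on $\theta^{(2)}_{\bar Y}$ or on $\theta^{(1)}_{\bar Y}$, and using nef-ness of $\Omega^1_{\bar Y}(\log S_{\bar Y})$ together with ampleness of $\omega_{\bar Y}(S_{\bar Y})$ with respect to $Y$, the same arguments that prove the forward direction of Theorem \ref{HHP} produce an inequality bounding $\deg E^{2,0}|_{\bar C}$ by a rational multiple of $\deg\omega_{\bar Y}(S_{\bar Y})|_{\bar C}$, in the opposite direction to the Shimura-curve equality of the previous step. Substituting the equality into this inequality, and using adjunction
$$\deg\omega_{\bar C}(S_{\bar C})=K_{\bar Y}.\bar C+\bar C.\bar C+\deg S_{\bar C}$$
together with $\deg\omega_{\bar Y}(S_{\bar Y})|_{\bar C}=(K_{\bar Y}+S_{\bar Y}).\bar C$, yields \eqref{eqp4}, \eqref{eqp5} and \eqref{eqp6} after rearrangement in the three respective cases.

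The main obstacle is the precise identification, in each of the three cases, of the irreducible sub-VHS of $E|_{\bar C}$ that carries the Hodge line bundle $E^{2,0}|_{\bar C}$, together with the verification that this same sub-bundle is the one detected by the Higgs field on $\bar Y$. Only then can the two estimates -- the Shimura-curve equality and the $\bar Y$-positivity inequality -- be chained so that they refer to the same quantity. A related subtlety is the inequality $\deg S_{\bar C}\le S_{\bar Y}.\bar C$, which is strict when $\bar C$ meets $S_{\bar Y}$ non-transversally and which has to be propagated carefully in order to obtain the stated form of the right-hand sides.
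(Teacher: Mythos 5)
There is a genuine gap in your second step. Your Step A is fine and is also what the paper uses: for a Shimura curve the Arakelov relation of Lemma \ref{decomposition} gives $\deg E^{2,0}|_{\bar C}=\deg\omega_{\bar C}(S_{\bar C})$ when $\theta^{(2)}_{\bar C}\neq 0$, and $\deg E^{2,0}|_{\bar C}=\tfrac12\deg\omega_{\bar C}(S_{\bar C})$ when $\theta^{(2)}_{\bar C}=0$ (in particular your assertion that $E^{2,0}|_{\bar C}$ is trivial in the case of \eqref{eqp6} is false; it has positive degree there as well). The problem is Step B: you need an upper bound for $\deg E^{2,0}|_{\bar C}$, i.e.\ for the intersection number $\ch_1(E^{2,0}_{\bar Y}).\bar C$ against the \emph{particular curve class} $\bar C$, by a multiple of $(K_{\bar Y}+S_{\bar Y}).\bar C$. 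The arguments proving Theorem \ref{HHP} cannot produce this: they are Simpson slope/polystability estimates taken with respect to the fixed polarization $\omega_{\bar Y}(S_{\bar Y})$, so they only control $\ch_1(E^{2,0}_{\bar Y}).\ch_1(\omega_{\bar Y}(S_{\bar Y}))^{d-1}$ and say nothing about degrees along an arbitrary curve in $\bar Y$ (semistability for one polarization does not restrict to curves not in that polarization class). In fact, once Step A is granted, the inequality you postulate in Step B is, after adjunction, \emph{equivalent} to \eqref{eqp4} (resp.\ \eqref{eqp5}); so your argument reduces the theorem to an unproved claim of the same strength.

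The missing idea, which is how the paper proves Theorem \ref{HHP0R} (via Theorem \ref{1.6}, Addendum \ref{3.1} and the Chern class identity \eqref{tautchern}), is to exploit the Shimura structure of $C$ not only through the Arakelov equality but through the resulting splitting of $\sigma^*T_{\bsM}(-\log S_{\bsM})|_{\bar C}$: since $C$ is of Hodge type, $T_{\bar C}(-\log S_{\bar C})$ is a direct factor, and the complementary factor is built from a unitary local system tensored with $E^{0,2}|_{\bar C}$ (plus, in the $\theta^{(2)}_{\bar C}=0$ cases, the dual uniformizing piece). The inclusion $T_{\bar C}(-\log S_{\bar C})\subset T_{\bar Y}(-\log S_{\bar Y})|_{\bar C}\subset \sigma^*T_{\bsM}(-\log S_{\bsM})$ then embeds the saturated logarithmic normal sheaf $\check{N}_{\bar C/\bar Y}$ into that complement, whose slope is computable ($\leq\deg E^{0,2}|_{\bar C}=-\deg E^{2,0}|_{\bar C}$ in case \eqref{eqp4}, with the analogous bounds in the other two cases, the Griffiths--Yukawa hypothesis on $\bar Y$ being used to cut the image down in case \eqref{eqp5}). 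This is precisely the degree estimate \emph{along} $\bar C$ that your plan lacks; combined with Step A and with $\deg\check{N}_{\bar C/\bar Y}=\bar C.\bar C+\deg S_{\bar C}-S_{\bar Y}.\bar C$ from \eqref{tautchern}, it yields \eqref{eqp4}--\eqref{eqp6}. Without some argument of this kind on the curve itself, chaining the Shimura-curve equality with polarization-averaged positivity on $\bar Y$ cannot close the proof.
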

Again the first two inequalities generalize to the higher dimensional case (see Theorem \ref{1.6}). 
Now we can formulate a criterion for $Y$ to be a Shimura surface of Hodge type: 

\begin{theorem}\label{CharShimura}
Consider in Theorem \ref{HHP0R} a finite set of curves $\{C_i\}_{i\in I}$, with     
$$
\# I \geq (\rho({\bar Y})+\delta(S_{\bar Y}))^2 + \rho({\bar Y})+\delta(S_{\bar Y}) + 1,
$$
where $\delta(S_{\bar Y})$ is the number of double points on the boundary and where
$\rho({\bar Y})$ is the Picard number of $\bar{Y}$.
\begin{enumerate}
\item[i)] If for all $i\in I$
$$2\cdot \bar{C}_i.\bar{C}_i+2\cdot\deg(S_{\bar{C}_i}) = -K_{\bar{Y}}.\bar{C}_i+S_{\bar Y}.\bar{C}_i
$$ 
and $\theta_{\bar{C}_i}^{(2)}\neq 0$, then $Y$ is a Hilbert modular surface.
\item[ii)] If for all $i\in I$
$$3\cdot \bar{C}_i.\bar{C}_i+3\cdot \deg(S_{\bar{C}_i}) = -K_{\bar{Y}}.\bar{C}_i+ 2 \cdot S_{\bar Y}.\bar{C}_i
$$
and $\theta_{\bar Y}^{(2)}= 0$, then $Y$ is a ball quotient.
\end{enumerate}
\end{theorem}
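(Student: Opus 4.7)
The plan is to convert the equality assumption on each Shimura curve $\bar C_i$ into a sub-Higgs structure of the logarithmic Higgs bundle $(E,\theta)$ of $\bar Y$ along $\bar C_i$, and then to use the bound $\#I\ge (\rho(\bar Y)+\delta(S_{\bar Y}))^2 + \rho(\bar Y)+\delta(S_{\bar Y}) + 1$ combined with a pigeonhole argument to globalise this structure. A global splitting of the desired shape will then identify $\tilde Y$ with the expected bounded symmetric domain via Simpson--Deligne type uniformisation.

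The first step is the analysis of the equality cases in Theorem \ref{HHP0R} and its higher-dimensional version Theorem \ref{1.6}. By the same Higgs bundle computation that produces the reversed inequalities (\ref{eqp4}) and (\ref{eqp5}), equality is equivalent to the restriction $(E,\theta)|_{\bar C_i}$ splitting as a direct sum of sub-Higgs bundles matching the Shimura datum of $C_i$: in case (i) two rank one weight one factors, in case (ii) an $\SU(1,1)$-factor together with an (effectively) constant complement. This produces along each $\bar C_i$ a rank one subsheaf $L_i\hookrightarrow \Omega^1_{\bar Y}(\log S_{\bar Y})|_{\bar C_i}$, together with the information of how the Higgs field of $\bar Y$ acts on it.

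The second step is the globalisation. Each $L_i$ is determined by its class in $\mathrm{Pic}(\bar C_i)$ modulo the restriction of $\mathrm{NS}(\bar Y)$ and by the combinatorics of how $\bar C_i$ meets the components of $S_{\bar Y}$; together these data live in a finite set of cardinality of order $\rho(\bar Y)+\delta(S_{\bar Y})$. A pigeonhole on $\#I$, with the stated quadratic bound tuned so that after classifying the curves by pairs of such invariants one still retains at least $\rho(\bar Y)+\delta(S_{\bar Y})+1$ curves of a common type, produces a subset of the $C_i$ whose local splittings agree on overlaps. A linear algebra interpolation in $\mathrm{NS}(\bar Y)_\Q$ enlarged by the boundary strata then forces the $L_i$ to be the restrictions of a single global sub-Higgs bundle
\begin{equation*}
L\;\hookrightarrow\; \Omega^1_{\bar Y}(\log S_{\bar Y}) \qquad \text{(case (i))},
\end{equation*}
or the global vanishing $\theta^{(2)}_{\bar Y}=0$ (case (ii)).

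Finally, given the global sub-Higgs structure, I would invoke the Simpson correspondence together with the nefness of $\Omega^1_{\bar Y}(\log S_{\bar Y})$ and the log-ampleness of $\omega_{\bar Y}(S_{\bar Y})$ with respect to $Y$, to conclude that the universal cover $\tilde Y$ is the expected bounded symmetric domain: $\mathbb{H}\times \mathbb{H}$ in case (i), giving a Hilbert modular surface, and the complex $2$-ball $\SU(2,1)/\UU(2)$ in case (ii). The principal technical obstacle will be step two: making the pigeonhole-plus-interpolation precise so that the quantitative bound $(\rho+\delta)^2+(\rho+\delta)+1$ is sharp, and in particular verifying that compatible rank one subbundles along sufficiently many curves actually patch to a global subbundle of $\Omega^1_{\bar Y}(\log S_{\bar Y})$ without local obstructions at the nodes of $S_{\bar Y}$.
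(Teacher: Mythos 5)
Your first step is in the spirit of the paper's Addendum \ref{3.1}, but your second step --- the actual globalisation --- is a genuine gap, and it is not how the numerical bound $\# I\ge(\rho+\delta)^2+\rho+\delta+1$ functions. There is no mechanism by which line subbundles $L_i\subset\Omega^1_{\bar Y}(\log S_{\bar Y})|_{\bar C_i}$, given only along finitely many curves, patch to a global subbundle: the curves need not cover $\bar Y$ nor meet one another, a subsheaf prescribed on a divisor does not extend across the surface, and the data you propose to pigeonhole (the class of $L_i$ in ${\rm Pic}(\bar C_i)$ modulo restrictions from ${\rm NS}(\bar Y)$, plus boundary combinatorics) does not range over a finite set whose size is controlled by $\rho(\bar Y)+\delta(S_{\bar Y})$. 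Moreover, in case ii) you propose to \emph{derive} $\theta^{(2)}_{\bar Y}=0$, which is already a hypothesis of the theorem; what has to be produced there is the splitting of the weight-two Higgs bundle that makes $Y$ a ball quotient (Lemma \ref{decomposition}, ii)), not the vanishing of the Yukawa coupling.

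The paper's route is different and supplies exactly the missing mechanism. The equality cases (Addendum \ref{3.1}) show that the saturated Higgs subsheaf $(F,\theta)\subset(E_{\bar Y},\theta_{\bar Y})$ generated by $E^{2,0}$ restricts to each $\bar C_i$ with degree zero, while Theorem \ref{1.6} gives the strict negativity $\deg\check N^\natural_{\bar C_i/\bar Y}<0$, hence --- after blowing up the double points of $S_{\bar Y}$ (Claim \ref{modification}, justified by Corollary \ref{good2}; this is precisely where $\delta(S_{\bar Y})$ enters the count) --- the curves have negative self-intersection. Lemma \ref{2.1} then uses the bound $\rho^2+\rho+1$ on the blown-up surface to manufacture an \emph{effective} combination $D=\sum a_i\bar C_i$ with $D^2>0$. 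Since $\ch_1(F).D=0$ and $\ch_1(F)$ is negative semi-definite (curvature of the Hodge metric), the Hodge index theorem forces $\ch_1(F)=0$; Simpson polystability then splits $(F,\theta)$ off as a direct factor with unitary complement, and Lemma \ref{decomposition} identifies $Y$ as a Hilbert modular surface in case i) and a ball quotient in case ii). None of these ingredients --- negativity of the self-intersections, the N\'eron--Severi counting lemma, the Hodge index argument against the semi-negative class $\ch_1(F)$, and the final Simpson splitting --- appears in your outline, so the passage from curve-wise equalities to a global statement about $Y$ remains unsupported.
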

In both Theorems, \ref{HHP0R} and \ref{CharShimura}, one can allow the curves $C$ or $C_i$ to be
deformations of Shimura curves in $\sM$, as defined in \ref{def}. 
However such a deformation can only be non-trivial if $\theta_{\bar C}^{(2)}$ or $\theta_{{\bar C}_i}^{(2)}$ are zero. 

As we will see in the proofs, a quasi-projective surface $Y \subset \sM$ containing a
Shimura curve $C$ of Hodge type (or its deformation) and satisfying relative
proportionality  in Theorem \ref{CharShimura} i) or ii), looks in an infinitesimal neighborhood of $C$ like a Shimura surface of the corresponding type.

The corresponding statement in Theorem \ref{3.2} will be formulated for submanifolds $Z\subset \sM$ of arbitrary dimension, but the codimension of the Shimura subvarieties $W\subset Z$, replacing the curves $C$, still has to be one.\vspace{.1cm}

\begin{notations}\label{del}
We consider a projective manifold $\bar Z$, a reduced strict normal crossing divisor
$S_{\bar Z}$ and a variation of Hodge structures $\V$ on $Z=\bar{Z} \setminus S_{\bar Z}$ of weight $k$. 
Even if not stated we will always assume that the local monodromies around the components of $S_{\bar Z}$ are unipotent, and that $\V$ is polarized.
Let $\mathcal V$ be the Deligne extension of $\V\otimes_\C \sO_Z$ to $\bar{Z}$. The $\sF$-filtration
on $Z$ extends to a filtration of $\mathcal V$ by subbundles, and the Gau{\ss}-Manin connection extends to a connection $\nabla$ with logarithmic poles on $\mathcal V$. Griffiths Transversality implies that $\nabla$ induces an $\sO_{\bar Z}$-linear map 
$$
\theta: E:=\bigoplus_{p=0}^k E^{p,k-p}={\mathfrak G}{\mathfrak r}_{\sF}(\mathcal V) \longrightarrow E\otimes \Omega^1_{\bar Z}(\log S_{\bar Z}) =
\big(\bigoplus_{p=0}^k E^{p,k-p}\big)\otimes \Omega^1_{\bar Z}(\log S_{\bar Z}),
$$
with $\theta(E^{p,k-p})\subset E^{p-1,k-p+1}\otimes \Omega^1_{\bar Z}(\log S_{\bar Z})$.
We will call $(E,\theta)$ the \textit{Higgs bundle induced by the Deligne extension of $\V$, and 
$\theta$ the Higgs field}. If we want to indicate the base space, we will write
$(E_{\bar Z},\theta_{\bar Z})$ instead of $(E,\theta)$.

The Higgs field is the direct sum of maps,
$$
\theta_{p,k-p}:E^{p,k-p}\longrightarrow E^{p-1,k-p+1}\otimes \Omega^1_{\bar Z}(\log S_{\bar Z})
$$
Their composite 
$$
\theta^{(k)}_{\bar Z}:=(\theta_{1,k-1} \otimes {\rm id}_{\Omega^1_{\bar Z}(\log S_{\bar Z})}^{\otimes k-1})
\circ \cdots\circ (\theta_{k-1,1}\otimes {\rm id}_{\Omega^1_{\bar Z}(\log S_{\bar Z})})\circ
\theta_{k,0},
$$
called the \textit{Griffiths-Yukawa coupling}, has image in 
$E^{0,k}\otimes S^k(\Omega^1_{\bar Z}(\log S_{\bar Z}))$.\vspace{.2cm} 

Let $\bsM$ be a second projective manifold and let $\sM$ be the complement of a reduced strict normal crossing divisor $S_\bsM$. We will consider a morphism $\varphi: Z \to \sM$, generically finite over its image. We will denote the induced rational map ${\bar Z} \to \bsM$ again by $\varphi$.

The rational map $\varphi:\bar{Z} \to \bsM$ is given by a morphism $\varphi_0$ on the complement
$\bar{Z}_0$ of a codimension two subscheme. For a locally free sheaf $\sF$ on $\bsM$ we will write 
$\varphi^* \sF$ for the maximal extension of the pullback $\varphi_0^* \sF$ from $\bar{Z}_0$ to $\bar{Z}$.
Correspondingly, If $B$ is a reduced divisor, $\varphi^* B$ will be the closure of $\varphi_0^* B$.

The inclusion $T_{\bar{Z}_0}(-\log S_{{\bar Z}_0})\to \varphi^*T_{\bsM}(-\log S_\bsM)|_{\bar{Z}_0}$ 
extends to $\bar Z$ and we define the logarithmic normal sheaf $\check{N}_{\bar{Z}/\bsM}$ by the exact sequence
\begin{equation}\label{taut}
0 \longrightarrow T_{\bar{Z}}(-\log S_{{\bar Z}})\longrightarrow\varphi^* T_{\bsM}(-\log S_\bsM)\longrightarrow \check{N}_{\bar{Z}/\bsM}\longrightarrow 0.
\end{equation}
Even if $\bar{Z}$ is a submanifold of $\bsM$ the logarithmic normal sheaf might differ from the usual
normal sheaf $\sN_{{\bar Z}/\bsM}$ defined as the cokernel of $T_{\bar{Z}} \to \varphi^* T_{\bsM}$.
Comparing the Chern classes one obtains for $\bar{Z}\subset \bsM$
\begin{equation}\label{tautchern}
\ch_1(\check{N}_{\bar{Z}/\bsM})= 
\ch_1(\sN_{{\bar Z}/\bsM}) - S_{\bsM}|_{\bar Z} + S_{\bar Z}.
\end{equation}
More generally, assume that $\varphi:Z \to \sM$ is \'etale over its image of degree
$\deg(\varphi)$ and that $\dim(\bsM)=n=\dim(\bar Z)+1$. Writing 
$\varphi(\bar{Z})$ for the closure of the image of $\bar{Z}$ in $\bsM$, one finds for all divisors $L$ on $\bsM$ 
\begin{equation}\label{tautchern2}
\ch_1(\check{N}_{\bar{Z}/\bsM}).(\varphi^*L)^{n-2} =
\deg(\varphi)\cdot \varphi(\bar{Z})^2.L^{n-2} - (S_{\bsM}|_{\varphi(\bar Z)} - (S_{\bsM}|_{\varphi(\bar Z)})_{\rm red}).L^{n-2}.
\end{equation}
In fact, both sides are compatible with blowing ups of $\bar{Z}$ with centers in $S_{\bar Z}$.
So we may assume that $\varphi$ is a morphism. The formula \eqref{tautchern2} holds, if
$\varphi$ is injective, and the general case follows from the projection formula.\vspace{.1cm}
Usually $\sM$ and $W$ will denote Shimura varieties, and $Z$ will map to in $\sM$, or $W$ will map to $Z$.
\vspace{.2cm}\end{notations}
{\bf Acknowledgements.}\\[.1cm]
We would like to thank Sheng-Li Tan and De-Qi Zhang for discussions on Lemma 4.1.

\section{Shimura varieties of type $\SO(m,2)$ and $\SU(m,1)$}\label{type}

Let us first recall some well known basic facts on connected Shimura varieties and their connected Shimura subvarieties (see also \cite{An01} or \cite{Mil04}). We consider
\begin{gather*}
H \mbox{ a connected semisimple group defined over } \mathbb{Q} \mbox{ and of Hermitian type,}\\
K\subset H(\mathbb R) \mbox{ a  maximal compact subgroup} \\
X^+=H(\mathbb R)/K \mbox{ a  bounded  symmetric  domain for } H.
\end{gather*}
Writing $H^+(\mathbb R)$ for the connected component of $1$ in $H(\mathbb R)$, one can consider $X^+$ 
as a conjugacy class of 1-parameter subgroups $\UU(1)\to H^+(\mathbb R)$.

Choose any $\mathbb Z$-structure $H_{\mathbb Z}$ on $H$ and let $\Gamma\subset H(\Q)$ be an arithmetic subgroup, i.e. a subgroup $\Gamma$ which is commensurable to $H_{\mathbb Z}(\mathbb Z)$. 
In addition we will always assume that $\Gamma$ is neat. By a theorem of Baily-Borel 
the analytic space $\mathcal M^{\rm an}:=\Gamma\setminus X^+$ admits the so-called Baily-Borel compactification 
$\bar {\mathcal M}^*=\mathcal M\cup \Delta^*$ by adding the cusps $\Delta^*$ at infinity in $\mathcal M$.
Since $\sM^*$ is projective, $\mathcal M^{\rm an}$ has the structure of an algebraic variety $\sM$ over $\C$, inducing the analytic space structure on $\mathcal M^{\rm an}$. Since $\Gamma$ is torsion-free
$\mathcal M$ is smooth.
\vspace{.1cm}

We will call $\mathcal M$ a connected Shimura variety, although one sometimes requires in addition that 
$\Gamma\subset H_{\mathbb Z}(\mathbb Z)$ is a congruence subgroup, i.e. that
$\Gamma$ contains the kernel of $H_{\mathbb Z}(\mathbb Z)\to H_{\mathbb Z}(\mathbb Z/N \mathbb Z)$ for some $N$. 

Now let $G$ be a connected reductive group over $\Q$, and such that
$H=G^{ad}=G/Z(G)$ is of Hermitian type. Then $X^+$ is a $G^+(\mathbb R)$-conjugacy class of 1-parameter subgroups.\vspace{.1cm}

If we have another group $G_1$ of Hermitian type, and a homomorphism $G_1\to G$ sending conjugacy classes $X_1^+\to X^+$, then 
the map $X_1^+\to X^+$ is holomorphic and totally geodesic by Satake.\vspace{.1cm} 

A Shimura subvariety of Hodge type (also called special subvariety) is a component of the image of some $X_1^+$ in $\mathcal M$.  

The following theorem gives a characterization of the quasi-projective subvarieties of $\mathcal M$ which are Shimura subvarieties of Hodge type or deformations of those. There is also the more general notion of Kuga fiber spaces (used in \cite{mvz07}) and of their bases. 
These are subvarieties of Hodge type if they contain a point corresponding to a CM abelian variety.

\begin{theorem}[Abdulali \cite{Abd94}, Moonen \cite{Mo98}, see also \cite{mvz07}, Section 1]\label{AbdMo} \ \\ 
Let $W \subset \sM$ be a closed algebraic totally geodesic embedding. Then $\sM$ contains a Shimura subvariety of Hodge type, isomorphic to $W\times W'$ (up to a finite \'etale cover). 
In particular, if $W$ is rigid, hence if $W'$ is a point, $W$ is a Shimura subvariety of Hodge type.
\end{theorem}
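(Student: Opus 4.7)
The plan is to combine the differential-geometric structure of totally geodesic subvarieties with the Hodge-theoretic machinery of Mumford-Tate groups. First, I would lift the inclusion to the universal cover: pick a component $\tilde{W}\subset X^+$ of the preimage of $W$. Being totally geodesic in the bounded symmetric domain $X^+$, the submanifold $\tilde W$ is automatically a complex, totally geodesic, Hermitian symmetric subdomain; by the classical theory (Lie triple systems / Mok's results on holomorphic isometries), there exists a real semisimple subgroup $H_1^{\mathbb R}\subset H(\mathbb R)$ such that $\tilde W=H_1^{\mathbb R}/K_1$ with $K_1=H_1^{\mathbb R}\cap K$ maximal compact in $H_1^{\mathbb R}$.

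The next and crucial step is to upgrade $H_1^{\mathbb R}$ to a $\mathbb Q$-algebraic subgroup. Since $W$ is closed \emph{algebraic} in $\sM$, the restriction of $\V$ to $W$ defines a sub-variation of Hodge structures, and I would let $G_1\subset G$ be the generic Mumford-Tate group at a Hodge-generic point of $W$. By Andr\'e's theorem on the algebraic monodromy group of a geometric variation of Hodge structures, $G_1$ is defined over $\mathbb Q$ and its derived group contains (in fact agrees with, after passing to a finite \'etale cover) the algebraic monodromy of $\V|_W$, which in turn is the $\mathbb Q$-descent of the Lie group cut out by the tangent directions of $\tilde W$. Thus $H_1^{\mathbb R}$ is the real locus of a $\mathbb Q$-algebraic subgroup $G_1^{\rm der}\subset G$, and the associated Shimura datum gives a Shimura subvariety $\sM_1 = \Gamma_1\backslash X_1^+\hookrightarrow \sM$ of Hodge type, with $\Gamma_1=\Gamma\cap G_1(\mathbb Q)$.

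To obtain the product decomposition, I would compare $\tilde W$ with the full Hermitian symmetric domain $X_1^+$ of $G_1$. A priori $\tilde W\subseteq X_1^+$, and $\tilde W$ may be a proper totally geodesic complex subdomain. Here I invoke the de Rham / Wolf decomposition for Hermitian symmetric domains together with the fact that $\tilde W$ is the orbit through a base point of a normal subgroup (by maximality of the Mumford-Tate choice): the noncompact factor of $G_1^{\rm der}$ then splits as a direct product $H_1\cdot H_1'$ of normal Hermitian factors, producing a product decomposition $X_1^+ \cong \tilde W\times \tilde W'$ of Hermitian symmetric domains. Taking the quotient by a suitable congruence subgroup (passing to a finite \'etale cover so that $\Gamma_1$ respects the product structure) yields $\sM_1\cong W\times W'$ as asserted. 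The rigidity assertion is then immediate: if $W$ has no nontrivial deformations in $\sM$ preserving the totally geodesic property, the factor $\tilde W'$ must be a point, whence $W=\sM_1$ is itself a Shimura subvariety of Hodge type.

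The main obstacle is the second step, namely forcing $H_1^{\mathbb R}$ to be defined over $\mathbb Q$: \emph{a priori} a totally geodesic real submanifold need not be cut out by a rational subgroup. It is exactly the algebraicity of the embedding $W\hookrightarrow\sM$, combined with the theorem of Andr\'e identifying algebraic monodromy with a normal subgroup of the generic Mumford-Tate group, that provides this $\mathbb Q$-structure; without this input one cannot distinguish Hodge-type subvarieties from merely totally geodesic analytic subvarieties. Once the $\mathbb Q$-structure is in place, the splitting into $W\times W'$ is a relatively formal consequence of the structure theory of Hermitian symmetric spaces.
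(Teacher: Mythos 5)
The paper does not actually prove Theorem \ref{AbdMo}: it is quoted from Abdulali \cite{Abd94} and Moonen \cite{Mo98} (see also \cite{mvz07}, Section 1), so there is no internal proof to compare against, and your sketch reconstructs the standard argument of those references --- lift $W$ to a totally geodesic Hermitian subdomain $\tilde W\subset X^+$, pass to the generic Mumford--Tate group $G_1$ of $\V|_W$, use normality of the connected algebraic monodromy group $M$ in $G_1^{\rm der}$ (Deligne--Andr\'e) to split the associated domain as $X_1^+\cong \tilde W\times \tilde W'$, and descend to $\sM$. Two points in your write-up deserve tightening. First, the $\Q$-structure on the group uniformizing $\tilde W$ is not what Andr\'e's theorem provides: it comes from the fact that $M$ is the Zariski closure of the image of $\pi_1(W)$ in $G(\Q)$, together with the fact that this image is a lattice in $H_1^{\mathbb R}$ --- this is exactly where the hypotheses that $W$ is \emph{closed} and \emph{algebraic} enter, giving finite covolume --- and the Borel density theorem, which identifies $M(\mathbb R)$ with $H_1^{\mathbb R}$ up to compact factors; Andr\'e's theorem is then the input giving $M\trianglelefteq G_1^{\rm der}$ and hence the almost-direct $\Q$-complement responsible for the factor $W'$. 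Second, your parenthetical claim that the algebraic monodromy ``agrees with'' $G_1^{\rm der}$ after a finite \'etale cover is false in general (a finite cover does not change the connected monodromy group, and if equality held then $W'$ would always be a point); fortunately your splitting argument only uses the normal inclusion of $M$ in $G_1^{\rm der}$, so nothing downstream depends on that remark.
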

\begin{notations}\label{def}
We will consider Shimura varieties \textit{up to \'etale coverings} so we will allow to replace $\Gamma$ by a 
subgroup of finite index, whenever necessary. By abuse of notations we will call $W$ a Shimura subvariety of Hodge type if $\sigma(W)$ has this property and if $\sigma:W\to \sigma(W)$ is \'etale.

A subvariety $\iota:W \hookrightarrow \sM$ will be called {\em a deformation of a Shimura subvariety of Hodge type,} if there exists a connected scheme $W'$, points $w'_1$, $w'_2 \in W'$ and a morphism $\Psi:W\times W' \to \sM$, such that $\iota=\Psi|_{W\times \{w'_1\}}$ and such that $\Psi|_{W\times \{w'_2\}}$ is an embedding
whose image is a Shimura  subvariety of Hodge type.
\end{notations}

Next, following \cite{Kud03}, we introduce Shimura varieties of orthogonal type determined by the following data:
\begin{gather*}
V, ( \ , \ ) \mbox{ an inner product space over } \Q \mbox{ of signature }  (n,2),\\
G=\SO(n,2)=\big\{g\in \SL_{n+2}|(g(x),g(y))=(x,y),\,\forall x, y\in V\big\}.
\end{gather*}
In number theory one often prefers to work with the isogenous group ${\rm GSpin}(V)$. One defines the $n$-dimensional complex space
$$
D=\big\{w\in V(\C)|(w,w)=0, (w,\bar w)<0\big\}/\C^*\subset \mathbb P(V(\C)),
$$
which is the union $D=D^+\cup D^-$ of two copies of the bounded symmetric domain 
$\SO(n,2)/\OO(n)\times \UU(1)$ of type IV, interchanged by complex conjugation.

Fixing a $\Z$--structure $G_\Z(\Z)$ on $G$ and again a neat arithmetic subgroup $\Gamma\subset G(\mathbb Q)$, the quotient $\mathcal M:=\Gamma\setminus D^+$ is called a connected Shimura variety of orthogonal type. 

The inner product space $V, ( \ , \ )$ together with the $\mathbb Z$-structure descends to a
polarized variation of Hodge structure $\V$ with a $\mathbb Z$-structure of weight two over $\mathcal M$. The one-dimensional vector spaces  
$V_{w}$ lying over $[w] \in D^+$ define the Hodge bundle $E^{2,0}$ and their complex conjugates 
${\bar V}_{w}$ define $E^{0,2}$. The orthogonal complements of the span $<V_w,{\bar V}_{w}>$, for $w\in D^+$ define $E^{1,1}$. 
It is also known that the Kodaira-Spencer (or dual Higgs field) $\theta: E^{2,0}\otimes T_{\mathcal M}\longrightarrow E^{1,1}$ is an isomorphism. 

Assuming that the local monodromies around the cusps are unipotent, and that $\Gamma$ is neat, Mumford studied in \cite{m77} smooth toroidal compactifications $\bar{\mathcal M}$ with $S_\bsM=\bsM\setminus\sM$ a normal crossing divisor, constructed in \cite{AMRT75}. The Higgs bundle $(E,\theta)$ extends to a unique  logarithmic Higgs bundle on $\bsM$, denoted again by $(E,\theta)$. In fact, as discussed in \cite[Section 2]{mvz07}, the bundle $(E,\theta)$ coincides with the one induced by the Deligne extension 
of $\V$ and the induced dual Higgs field
$$
E^{2,0}\otimes T_{\bar{\mathcal M}}(-\log S_\bsM)\longrightarrow E^{1,1}
$$
is still an isomorphism.\vspace{.1cm}

To define Shimura subvarieties of $\mathcal M$, as in \cite{Kud03}, one starts with a set of $\Q$-linearly independent vectors $x=\{x_1,\ldots , x_r\}\subset V(\Q)$
such that the intersection matrix $\big((x_i,x_j)\big)_{i,j}$ is positive definite.  We define
$V_x$ to be the orthogonal complement of the span $<x_1,\ldots,x_r>$, and $G_x$ to be the stabilizer of the span $<x_1,\ldots,x_r>$. The operation on $V_x$ defines an isomorphism $G_x\cong \SO(n-r,2)$.

The embedding $\SO(n-r,2)\hookrightarrow \SO(n,2)$ of groups 
induces the totally geodesic holomorphic embedding of the corresponding bounded symmetric domains 
$$
\xymatrix{
\SO(n-r,2)/\OO(n-r)\times \UU(1)\ar@{^{(}->}[r] &  \SO(n,2)/\OO(n)\times \UU(1).}
$$
The image of $\SO(n-r,2)/\OO(n-r)\times \UU(1)$ in $\mathcal M=\Gamma\setminus \SO(n,2)/\OO(n)\times \UU(1)$ 
is a Shimura subvariety $W$ of orthogonal type (see \cite{Kud03}, Page 4, (2.6) and (2.8)).\vspace{.1cm}

The pullback of the variation of Hodge structures $\V$ to $W$ decomposes as ${\mathbb W} \oplus\mathbb U$,
where $\mathbb U$ corresponds to a unitary variation of Hodge structures of bidegree $(1,1)$ with the $\mathbb Z$-structure arising from the $\Q$-subspace $<x_1,\ldots,x_r>$. Hence, after taking a finite \'etale base change we may assume $\mathbb U$ is trivial. Correspondingly, one obtains a decomposition of the Higgs bundle of the variation of Hodge structures
$$
(E^{2,0}_W\oplus E^{1,1}_W\oplus E^{0,2}_W,\theta)\oplus (\mathcal O_W^{\oplus r},0),
$$
where $E^{2,0}_W$ and $E^{0,2}_W$ are the restrictions of the invertible sheaves
$E^{2,0}$ and $E^{0,2}$ to $W$. Choosing as above a smooth Mumford compactification $\bar W$ of $W$
with $S_{\bar W}=\bar W\setminus W$ a strict normal crossing divisor, the logarithmic Higgs field defines an isomorphism 
$$
\theta: E^{2,0}_{\bar W} \otimes  T_{\bar W}(-\log S_{\bar W})\longrightarrow  E^{1,1}_{\bar W}.
$$
The Shimura subvariety $W$ is rigid in $\mathcal M$, since the Griffiths-Yukawa coupling does not vanish \cite{mvz07}. 
Hence by Theorem \ref{AbdMo} it is of Hodge type and of type $\SO(n-r,2)$, or as we will sometimes say,
of Hodge type for $\SO(n-r,2)$.\vspace{.1cm}

A Shimura variety is of Hodge type for $\SU(n,1)$, if the associated Hermitian symmetric space is the $n$-dimensional complex ball
$$
X^+=\SU(n,1)/\UU(n).
$$
\begin{remark}\label{weight2}
In this case the natural uniformizing variation of Hodge structures is of weight one and the Higgs bundle has the form $(H^{1,0}\oplus H^{0,1},\tau)$,
where $H^{1,0}$ is a line bundle and where $\tau: T_{\mathcal M}\otimes H^{1,0}\to H^{0,1}$ is an isomorphism.

However if $\Gamma\backslash \SU(n,1) \slash \UU(n)$ occurs as a Shimura subvariety of Hodge type in some Shimura variety $\sM$ of type $\SO(n,2)$, then the restriction $(E,\theta)$ of the uniformizing variation of Hodge structures on $\sM$ will be of weight two. The corresponding Higgs bundles are related by 
$$
E^{2,0}=H^{1,0}, \ \ E^{1,1}=H^{0,1}\oplus {H^{0,1}}^\vee \mbox{ \ and \ }E^{0,2}={H^{1,0}}^\vee.
$$ 
\end{remark}
\begin{example} Kondo \cite{kondo} has constructed a moduli embedding of a compact Shimura surface of type $\SU(2,1)$ (appearing in Deligne-Mostow's list as a component in the moduli space parameterizing Jacobian of genus $6$ admitting CM of $\Q(e^{\frac{2\pi i}{5}})$) into a Shimura variety of type $\SO(10,2)$  parameterizing a subfamily of K3 surfaces.
\end{example}

\begin{lemnot}\label{decomposition} 
Let $\sigma: W \to \mathcal M=\Gamma\backslash\SO(n,2) \slash \OO(n)\times \UU(1)$ be a generically finite morphism from a non-singular $m$-fold $W$ and let $\bar{W}$ be a projective compactification of $W$ with $S_{\bar W}=\bar{W}\setminus W$ a strict normal crossing divisor and with $\omega_{\bar{W}}(S_{\bar W})$ nef and big. Assume that the local monodromies around the components of $S_{\bar W}$ are unipotent, and  write $\sigma^*{\V}=\W\oplus \U$ where $\mathbb U$ is the largest unitary subvariation of Hodge structures of type $(1,1)$.

For the logarithmic Higgs bundle $(E_{\bar W},\theta_{\bar W})$ induced by the Deligne extension of $\W$ to $\bar{W}$, let
$E^{1,1}_\diamond$ denote the image of the Higgs map
$$
\theta: T_{\bar W}(-\log S_{\bar W})\otimes E^{2,0}_{\bar W}\longrightarrow E^{1,1}_{\bar W}.
$$
\begin{enumerate}
\item[i)] The following conditions are equivalent:
\begin{enumerate}
\item[a)] $E^{1,1}_{\diamond}=E^{1,1}_{\bar W}$.
\item[a')] For the Higgs bundle $(E_{W},\theta_{W})=(E_{\bar W},\theta_{\bar W})|_W$ one has
$E^{1,1}_{\diamond}|_W=E^{1,1}_{W}$.
\item[b)] $W$ is a Shimura subvariety of Hodge type for $\SO(m,2)$.
\item[c)] $W$ is a Shimura subvariety of Hodge type and the Griffiths-Yukawa coupling $\theta_{\bar W}^{(2)}$
of $(E_{\bar W},\theta_{\bar W})$ is non-zero.
\end{enumerate}
Moreover, if the conditions a), b) and c) hold true,
\begin{equation*}
\ch_1(\omega_{\bar W}(S_{\bar W}))= m \cdot \ch_1(E^{2,0}_{\bar W}).
\end{equation*}
\item[ii)]  The following conditions are equivalent:
\begin{enumerate}
\item[a)] The Higgs bundle $(E_{\bar W},\theta_{\bar W})$ decomposes as a direct sum 
$$
(E_{\bar W},\theta_{\bar W})= (E^{2,0}_{\bar W} \oplus E^{1,1}_{\diamond}, \theta_{\diamond}) \oplus (E'^{1,1}_{\diamond} \oplus E^{0,2}_{\bar W},\theta'_{\diamond}),
$$
where $E^{1,1}_{\bar W}=E^{1,1}_\diamond \oplus E'^{1,1}_{\diamond}$ and $E'^{1,1}_{\diamond}\neq 0$. 
\item[a')] The Higgs bundle $(E_{W},\theta_{W})=(E_{\bar W},\theta_{\bar W})|_W$ decomposes as a direct sum 
$$(E_{W},\theta_{W})= (E^{2,0}_{W} \oplus E^{1,1}_{\diamond}|_W, \theta_{\diamond}) \oplus (E'^{1,1}_{\diamond}|_W \oplus E^{0,2}_{W},\theta'_{\diamond}).$$ 
\item[b)] $W$ is a deformation of a Shimura subvariety of Hodge type for $\SU(m,1)$.
\item[c)] $W$ is a deformation of a Shimura subvariety of Hodge type and the Griffiths-Yukawa coupling $\theta_{\bar W}^{(2)}$
of $(E_{\bar W},\theta_{\bar W})$ is zero.
\end{enumerate}
Moreover, if the conditions a), b) and c) hold true,
\begin{equation*}
\ch_1(\omega_{\bar W}(S_{\bar W}))= (m+1) \cdot \ch_1(E^{2,0}_{\bar W}),
\end{equation*}
and if $\dim(W)>1$ then $W$ is rigid, hence of Hodge type. 
\end{enumerate}
\end{lemnot}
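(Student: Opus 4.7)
The plan is to prove both parts by the cyclic chain $(b)\Rightarrow(c)\Rightarrow(a)\Leftrightarrow(a')\Rightarrow(b)$, deferring the Chern class identities and the rigidity statement to the end. I first dispose of the easy direction $(b)\Rightarrow(a)$ and $(b)\Rightarrow(c)$. In part i), if $W$ is a Shimura subvariety of Hodge type for $\SO(m,2)$, the analysis of Section~\ref{type} shows that the uniformizing VHS of $W$ is exactly $\W$, and that the logarithmic Higgs field $\theta:T_{\bar W}(-\log S_{\bar W})\otimes E^{2,0}_{\bar W}\to E^{1,1}_{\bar W}$ is an isomorphism; in particular $\theta^{(2)}_{\bar W}\neq 0$. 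In part ii), Remark~\ref{weight2} provides the explicit direct-sum decomposition of $(E_{\bar W},\theta_{\bar W})$ for a Shimura subvariety of Hodge type for $\SU(m,1)$, together with $\theta^{(2)}_{\bar W}=0$, and this splitting propagates along any deformation since it is determined by the monodromy of the underlying local system.

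The main obstacle is the converse $(a)\Rightarrow(b)$. In part~i), the surjectivity of $\theta$ in condition (a), combined with the polarization, exhibits $(E_{\bar W},\theta_{\bar W})$ as having precisely the shape of the uniformizing Higgs bundle of an $\SO(m,2)$ Shimura variety of dimension $m$. By Simpson's correspondence this decomposition of Higgs bundles lifts to a direct sum decomposition of $\sigma^*\V$ compatible with its $\Q$-structure, forcing $\sigma:W\to\sM$ to be a closed totally geodesic embedding. Theorem~\ref{AbdMo} then yields a Shimura subvariety of Hodge type $W\times W'\subset \sM$; the non-vanishing of $\theta^{(2)}_{\bar W}$ implies $W$ is rigid, so $W'$ is a point and $W$ itself is of Hodge type. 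In part ii), the direct-sum splitting of $(E_{\bar W},\theta_{\bar W})$ given by (a) matches the shape of Remark~\ref{weight2}; again Simpson's correspondence upgrades it to a splitting of $\sigma^*\V$, whence $\sigma$ factors through a totally geodesic embedding of $\SU(m,1)$-type (up to deformation). Theorem~\ref{AbdMo} then gives the deformation-of-Hodge-type conclusion; rigidity for $\dim(W)>1$ follows because any nontrivial deformation would perturb the subsheaf $E^{1,1}_\diamond\subset E^{1,1}_{\bar W}$ in a way incompatible with (a), a perturbation not available in codimension $\geq 2$.

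The equivalences $(a)\Leftrightarrow(a')$ are immediate from the unipotent monodromy hypothesis: $(E_{\bar W},\theta_{\bar W})$ is the canonical Deligne extension of $(E_W,\theta_W)$, and a surjection or direct sum splitting of the Higgs map on $W$ extends uniquely across the boundary and vice versa. The remaining equivalences $(b)\Leftrightarrow(c)$ follow from the preceding: the dichotomy between the $\SO(m,2)$-case (Higgs field an isomorphism, Griffiths-Yukawa non-zero) and the $\SU(m,1)$-case (split Higgs bundle, Griffiths-Yukawa zero) exhausts the Hermitian-type possibilities for a rigid totally geodesic subvariety of $\sM$ when $\sM$ is of type $\SO(n,2)$.

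Finally, the Chern class formulas follow by taking determinants. In case i), the isomorphism $T_{\bar W}(-\log S_{\bar W})\otimes E^{2,0}_{\bar W}\cong E^{1,1}_{\bar W}$ together with the self-duality of $E^{1,1}_{\bar W}$ under the weight-two polarization forces $2\det E^{1,1}_{\bar W}=0$ modulo torsion, yielding $\ch_1(\omega_{\bar W}(S_{\bar W}))=m\cdot \ch_1(E^{2,0}_{\bar W})$. In case ii), I pass to the weight-one VHS on the ball quotient (which exists by Remark~\ref{weight2} once $W$ is known to be of $\SU(m,1)$-type up to deformation); the Higgs isomorphism $T_{\bar W}(-\log S_{\bar W})\otimes H^{1,0}\cong H^{0,1}$ combined with the weight-one polarization $\det H^{0,1}=(H^{1,0})^{-1}$ gives $\ch_1(\omega_{\bar W}(S_{\bar W}))=(m+1)\cdot \ch_1(E^{2,0}_{\bar W})$. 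I expect the main technical difficulty to lie in step $(a)\Rightarrow(b)$, where one must translate the algebraic condition on the logarithmic Higgs bundle into the existence of a totally geodesic embedding $W\hookrightarrow\sM$ in the differential-geometric sense, controlling the behavior across the boundary $S_{\bar W}$.
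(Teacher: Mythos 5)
The central step $(a)\Rightarrow(b)$ is not actually proved in your proposal. You assert that ``by Simpson's correspondence the decomposition of Higgs bundles lifts to a direct sum decomposition of $\sigma^*\V$ compatible with its $\Q$-structure, forcing $\sigma$ to be a closed totally geodesic embedding''. In case i) there is no decomposition to lift at all --- condition a) is the surjectivity of $\theta$ --- and in neither case does a splitting of the pulled back local system by itself force total geodesicity; moreover $\sigma$ is a priori only generically finite onto its image, so ``closed embedding'' is part of what has to be established. The paper's argument at exactly this point is the content you are missing: the restriction of $\theta$ to $W$ is identified with the differential $d\sigma:T_W\to \sigma^*T_{\sM}\simeq (E^{1,1}_W\oplus U^{1,1})\otimes E^{2,0\vee}_W$, condition a)/a') makes its image a holomorphic direct factor which is orthogonal with respect to the Hodge metric, whence $\sigma$ is \'etale onto a non-singular image; completeness of that image in the Hodge metric together with \cite[Claim 6.9]{mvz07} and Helgason's theorem then yields the totally geodesic embedding, after which Theorem \ref{AbdMo} (using rigidity from the non-vanishing Griffiths-Yukawa coupling) applies in case i). Your substitute argument for rigidity in case ii) when $\dim W>1$ (``a nontrivial deformation would perturb $E^{1,1}_\diamond$ in a way incompatible with (a)'') cannot work: a deformation inside $\sM$ leaves the pullback local system, hence condition a), unchanged --- this is precisely why non-rigid Shimura curves satisfying a) exist for $m=1$. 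The paper needs the cited result of Saito--Zucker \cite{sz} here, and your proposal offers no replacement for it.

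The boundary step is also glossed over. The compactification $\bar W$ is an arbitrary one with $\omega_{\bar W}(S_{\bar W})$ nef and big, not a Mumford compactification, and the equivalence a)$\Leftrightarrow$a') is not ``immediate from unipotent monodromy'': a surjection or splitting of the Higgs map over $W$ extends to a map of the Deligne extensions, but its cokernel, respectively the failure of the splitting, could a priori be supported on $S_{\bar W}$. The paper proves that b) (together with a')) implies a) by comparing $\bar W$ with a Mumford compactification $\bar W'$ on a common blowup $\hat W$, using the identity $\phi^*\omega_{\bar W}(S_{\bar W})=\phi'^*\omega_{\bar W'}(S_{\bar W'})$ to transport the splitting of $T_{\bar W}(-\log S_{\bar W})$ inside $E^{1,1}_{\bar W}\otimes E^{2,0\vee}_{\bar W}$ across compactifications; the implication a')$\Rightarrow$a) is then obtained through b), not directly. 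Your determinant computations for the Chern class identities are essentially sound (and close to the paper's, which uses self-duality of $(E_{\bar W},\theta_{\bar W})$ instead of passing to the auxiliary weight-one variation of Remark \ref{weight2}), but they too presuppose the isomorphism, respectively the splitting, on all of $\bar W$, i.e.\ the step you skipped.
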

\begin{proof} Assume first, that $\sigma(W)\subset \sM$ is a deformation of a Shimura subvariety of Hodge type. Then one has the isomorphism in i), a') or the decomposition in ii), a'). We will show, that
this extends to $\bar{W}$, in particular this will imply that in i) or ii) the conditions a) and a') are equivalent.\vspace{.2cm} 

Let ${\bar W}'$ be a Mumford compactification and $S_{{\bar W}'}={\bar W}'\setminus W'$. 
Then as discussed in \cite[Section 2]{mvz07} the image of the Higgs field 
$E^{1,1}_{\diamond,{\bar W}'}$ is a direct factor of $E^{1,1}_{{\bar W}'}$. Choose a third compactification
$\hat{W}$ of $W$ which allows morphisms $\phi:\hat{W} \to {\bar W}$ and $\phi':\hat{W} \to {\bar W}'$.
Since the Deligne extension is compatible with pullbacks, one has
$$
\phi^*T_{\bar W}(-\log S_{\bar W}) \hookrightarrow \phi^*E^{1,1}_{\bar W}\otimes {E^{2,0}_{\bar W}}^{-1}=
\phi'^*E^{1,1}_{{\bar W}'}\otimes {E^{2,0}_{{\bar W}'}}^{-1} \hookleftarrow \phi'^*T_{{\bar W}'}(-\log S_{{\bar W}'}).
$$
The inclusion on the right hand side splits, hence we obtain an inclusion
$$
\phi^*T_{{\bar W}}(-\log S_{{\bar W}}) \hookrightarrow \phi'^*T_{{\bar W}'}(-\log S_{{\bar W}'}).
$$
This must be an isomorphism, since as in the proof of \cite[Lemma 2.7]{mvz07} it is easy to see that
$$
\phi^*\omega_{\bar W}(S_{\bar W})=\phi'^*\omega_{{\bar W}'}(S_{{\bar W}'}).
$$
So $\phi^*T_{\bar W}(-\log S_{\bar W})$ is a direct factor of $\phi^*E^{1,1}_{{\bar W}}\otimes {E^{2,0}_{{\bar W}}}^{-1}$, and hence 
$$
T_{\bar W}(-\log S_{\bar W}) \hookrightarrow E^{1,1}_{{\bar W}}\otimes {E^{2,0}_{{\bar W}}}^{-1}
$$ 
splits, as claimed in i), a) and ii), a).\vspace{.2cm} 

Assuming the condition a) in i) or ii), we will write  
\begin{equation}\label{eqdecomp}
(E_{\bar W},\theta_{\bar W})= (E_\diamond,\theta_\diamond)\oplus (E'_\diamond,\theta'_\diamond), 
\end{equation}
where the first direct factor contains $E^{2,0}$, hence $T_{\bar W}(\log S_{\bar W})\otimes E^{2,0}$ as well.

We will show next, that the existence of this splitting of Higgs bundles forces
$W$ to be the deformation of a Shimura subvariety of Hodge type. The decompositions, corresponding to $\sigma^*{\V}=\W\oplus \U$ or to the one in part ii), are both orthogonal with respect to the Hodge metric on the universal covering $\mathcal M$. The restriction of the Higgs map
$$ 
\theta: T_{\bar W}(-\log S_{\bar W}) \longrightarrow E^{1,1}_{\diamond} \otimes E^{2,0\vee}_{\bar W}
$$
to $W$ is then an isomorphism, and it can be identified with the differential 
$$
d\sigma: T_W\longrightarrow d\sigma(T_W)\subset \sigma^*T_{\mathcal M}\simeq E^{1,1}_{\bar W}\otimes E^{2,0\vee}_{\bar W}\oplus U^{1,1} \otimes E^{2,0\vee}_{\bar W},
$$
where $U^{1,1}$ is the Higgs bundle associated to $\U$.
Hence the image of $d\sigma$ is a holomorphic direct factor, and orthogonal with respect to the Hodge metric.
Therefore $\sigma$ is \'etale over its image and the latter is a non-singular subvariety of $\sM$.
Since $\sigma(W) \subset \mathcal M$ is a complete submanifold with respect to the Hodge metric $h$ (i.e. every Cauchy sequence in the sub metric space $(W,h_W)\subset (\mathcal M, h)$ converges to a point in $W$), then \cite[Claim 6.9]{mvz07} together with \cite[Theorem I.14.5]{He62} show that $\sigma(W) \hookrightarrow \sM$ is a holomorphic totally geodesic  embedding. Then $W$ is either uniformized by $\SO(m,2)/\OO(m)\times \UU(1)$ or by $\SU(m,1)/\UU(m)$, where $\SO(m,2)$ respectively $\SU(m,1)$ is the non-compact factor of the Zariski closure of the monodromy group. By Remark \ref{weight2} the Griffiths-Yukawa coupling is zero for $W$ of type $\SU(m,1)$ and non-zero for $W$ of type $\SO(m,2)$.

The non-vanishing of the Griffiths-Yukawa coupling implies the rigidity of $W$, and by
Theorem \ref{AbdMo} $W$ is a Shimura variety of Hodge type in this case.

By \cite{sz} the same holds true if the Griffiths-Yukawa coupling vanishes, and if $\dim(W)>1$.

Returning to the splitting in \eqref{eqdecomp} one has 
$E^{0,2}_\diamond=E^{0,2}$ if and only if the Griffiths-Yukawa coupling does not vanish.
By the choice of $\W$ this is equivalent to $E'_\diamond=0$. So we verified the equivalence of the conditions a), b) and c) in i) and in ii).

It remains to verify the description of $\ch_1(\omega_{\bar W}(S_{\bar W}))$. In i)
we have seen already that $E^{0,2}_\diamond=E^{0,2}$, hence $E'_\diamond$ is concentrated in bidegree $(1,1)$ and $\theta'_\diamond=0$. By the choice of $\W$ this is only possible if
$E'_\diamond=0$. Then  
\begin{gather*}
\ch_1(E^{2,0}_{\bar W}\oplus E^{1,1}_\diamond\oplus E^{0,2}_{\bar W})=\ch_1( E^{1,1}_\diamond)=0\\
\mbox{and \ \ } 
\ch_1(T_{\bar W}(-\log S_{\bar W}))+m \cdot \ch_1(E^{2,0}_{\bar W}) =0.
\end{gather*}
In Case ii) the Griffiths-Yukawa coupling is zero. So the 
Higgs subbundle $(E_\diamond,\theta_\diamond)$ is concentrated in bidegrees $(2,0)$ and $(1,1)$.
Since $(E_{\bar W},\theta_{\bar W})$ is self dual, one finds that $E'^{1,1}_{\diamond}={E^{1,1}_{\diamond}}^\vee$. For the Chern classes this implies that 
\begin{gather*}
\ch_1(E^{2,0}_{\bar W}\oplus E^{1,1}_\diamond)=0 \mbox{ \ \ and hence}\\
\ch_1(T_{\bar W}(-\log S_{\bar W}))+ (m+1) \cdot \ch_1(E^{2,0}_{\bar W}) =0.
\end{gather*}\nopagebreak\end{proof}
\begin{remarks} \
\begin{enumerate}
\item[1.] The Shimura subvarieties in Lemma \ref{decomposition} include all rigid Shimura subvarieties of Shimura varieties of orthogonal type. 
\item[2.] For $n=19$ $\sM$ is the moduli scheme of polarized K3 surfaces \cite{ks67}. 
The Kummer construction identifies ${\mathcal A}_2$ with a Shimura subvariety of $\SO(3,2)$ type. 
For $n=1$ and $2$ one recovers modular curves, Hilbert modular surfaces and their quaternionic versions \cite{Kud03}.
\item[3.] If a Satake embedding $\mathcal M\to \mathcal A_g$ into a Shimura variety of $Sp(2g,\mathbb R)$-type (i.e. into the moduli space of polarized abelian varieties with a suitable level structure) is of Hodge type, then it maps Shimura subvarieties of Hodge type to Shimura subvarieties of Hodge type \cite{Abd94}.
The Kuga-Satake construction, see \cite{ks67} and \cite{vG00}, provides us with such an embedding.
\end{enumerate}
\end{remarks}

\section{Hirzebruch-H\"ofer's relative proportionality on Shimura varieties of type $\SU(n,1)$ or $\SO(n,2)$.}\label{HHPR}

In this section we will study subvarieties $Z$ of a Shimura variety $\sM$ of type $\SO(n,2)$ or
$\SU(n,1)$. We want to understand numerical conditions on natural sheaves on certain compactifications, generalizing the relative Hirzebruch-H\"ofer Proportionality stated in Theorem \ref{HHP0}.
\begin{assnot}\label{assSect2}
Let $\sM$ be a Shimura variety of type $\SO(n,2)$ or $\SU(n,1)$, and let $\bsM$ be a smooth Mumford compactification of $\sM$ with $S_\bsM=\bsM\setminus\sM$ a strict normal crossing divisor. We denote by $\V$ the uniformizing weight two variation of Hodge structures on $\sM$, and we will assume that the local monodromies around the components of $S_\bsM$ and $S_{\bar Z}$ are unipotent. We write again $\V=\W\oplus \U$ where $\U$ is the maximal
unitary subvariation of Hodge structures.

As in the Notations \ref{del} let $\bar Z$ be a smooth projective $d$-dimensional variety, $S_{\bar Z}$ a reduced strict normal crossing divisor on $\bar Z$ and write $Z={\bar Z}\setminus S_{\bar Z}$.
We consider a morphism $\varphi: Z \to \sM$ generically finite over its image and the induced rational map
${\bar Z} \to \bsM$ again denoted by $\varphi$.
 
The Higgs bundle of the weight two variation of Hodge structures $\W$ on $\sM$ will be denoted
by $(E_\bsM,\theta_\bsM)$, whereas the Higgs bundle of the pullback of $\W$ to $Z$ is written as $(E_Z,\theta_Z)$. Let $(E_{\bar{Z}},\theta_{\bar{Z}})$ be the Higgs bundle induced by the Deligne extension
of $\varphi^*\W$, so $(E_Z,\theta_Z)=(E_{\bar{Z}},\theta_{\bar{Z}})|_Z$.

We will assume that $\Omega^1_{\bar Z}(\log S_{\bar Z})$ nef and that $\omega_{\bar{Z}}(S_{\bar Z})$ is ample with respect to $Z$.
\end{assnot}

The assumption, that $\Omega^1_{\bar Z}(\log S_{\bar Z})$ is nef and that $\omega_{\bar{Z}}(S_{\bar Z})$ is ample with respect to $Z$, will allow to apply Yau's Uniformization Theorem (\cite{ya93}, see also \cite[Section 1]{vz05}) to $Z$. As discussed in
\cite[Lemma 4.1]{vz05} and \cite[\S 2]{mvz07} this assumption automatically holds true for compact submanifolds of Shimura varieties, and it holds if $Z$ is a Shimura subvariety of Hodge type, and $\bar{Z}$ a Mumford compactification.

In general, for a rational map between manifolds, the pullback of the logarithmic tangent sheaf will not be locally free. However, since in our situation $\bsM$ is a Mumford compactification of a Shimura variety this will be the case. 
\begin{lemma}\label{pullback} We keep the assumptions made in \ref{assSect2}. 
\begin{enumerate}
\item[a.] The sheaf $\varphi^* T_{\bsM}(-\log S_\bsM)$ is locally free and isomorphic to a direct factor $E^{1,1}_{\diamond \bar{Z}}\otimes {E^{2,0}_{\bar Z}}^{-1}$ of 
$E^{1,1}_{\bar Z}\otimes {E^{2,0}_{\bar Z}}^{-1}=
E^{1,1}_{\bar Z}\otimes {E^{0,2}_{\bar Z}}$.\vspace{.1cm}
\item[b.] If $\sM$ is of type $\SO(n,2)$, then $E^{1,1}_{\bar Z}=E^{1,1}_{\diamond \bar{Z}}$.\vspace{.1cm}
\item[c.] If $\sM$ is of type $\SU(n,1)$, then $E^{1,1}_{\bar Z}=E^{1,1}_{\diamond \bar Z}\oplus {E^{1,1}_{\diamond \bar Z}}^\vee$.\vspace{.1cm}
\item[d.] If $Z$ is the deformation of a Shimura subvariety of Hodge type, 
then $\check{N}_{\bar{Z}/\bsM}$ is locally free and 
$$
\varphi^* T_{\bsM}(-\log S_\bsM)\cong T_{\bar{Z}}(-\log S_{{\bar Z}})\oplus \check{N}_{\bar{Z}/\bsM}.
$$
\end{enumerate} 
\end{lemma}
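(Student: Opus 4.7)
The plan is to work first on the open subset $\bar Z_0\subset\bar Z$ on which $\varphi$ is a morphism and whose complement has codimension $\ge 2$, and then to extend the resulting direct-factor structure across $\bar Z\setminus\bar Z_0$ by Hartog. The starting input is the explicit shape of the Higgs map on $\bsM$: the dualised logarithmic Higgs field of $\W$ embeds $T_\bsM(-\log S_\bsM)\otimes E^{2,0}_\bsM$ as a locally free direct factor of $E^{1,1}_\bsM$, with image a subbundle $E^{1,1}_{\diamond\,\bsM}\subset E^{1,1}_\bsM$; in the $\SO(n,2)$ case this embedding is an isomorphism (the standard fact for orthogonal Shimura varieties recalled in Section~\ref{type}), while in the $\SU(n,1)$ case Remark~\ref{weight2} identifies $E^{1,1}_{\diamond\,\bsM}$ with the factor $H^{0,1}$ of $E^{1,1}_\bsM=H^{0,1}\oplus {H^{0,1}}^\vee$. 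Pulling back along $\varphi|_{\bar Z_0}$ preserves direct-factor inclusions of locally free sheaves, and yields $\varphi^*T_\bsM(-\log S_\bsM)\otimes E^{2,0}_{\bar Z_0}\cong \varphi^*E^{1,1}_{\diamond\,\bsM}$ as a direct factor of $E^{1,1}_{\bar Z_0}$. The corresponding idempotent endomorphism of $E^{1,1}_{\bar Z_0}$ then extends uniquely across the codimension $\ge 2$ locus to an endomorphism $p$ of the locally free sheaf $E^{1,1}_{\bar Z}$, and the relation $p^2=p$ persists by the uniqueness of the extension; its image $E^{1,1}_{\diamond\,\bar Z}$ is therefore a locally free direct factor of $E^{1,1}_{\bar Z}$. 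Comparing with the maximal extension from Notations~\ref{del} gives $\varphi^*T_\bsM(-\log S_\bsM)=E^{1,1}_{\diamond\,\bar Z}\otimes E^{0,2}_{\bar Z}$, proving (a). Parts (b) and (c) are immediate from the corresponding identities $E^{1,1}_\bsM=E^{1,1}_{\diamond\,\bsM}$ for $\SO(n,2)$ and $E^{1,1}_\bsM=E^{1,1}_{\diamond\,\bsM}\oplus {E^{1,1}_{\diamond\,\bsM}}^\vee$ for $\SU(n,1)$, pulled back and extended in the same way.

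For (d) I would appeal to Lemma~\ref{decomposition}. Assuming $Z$ is a deformation of a Shimura subvariety of Hodge type, that lemma supplies (after replacing $Z$ by a finite \'etale cover if necessary) a direct-sum decomposition of the Higgs bundle $(E_{\bar Z},\theta_{\bar Z})$ which in particular exhibits $T_{\bar Z}(-\log S_{\bar Z})\otimes E^{2,0}_{\bar Z}$ as a locally free direct factor of $E^{1,1}_{\diamond\,\bar Z}$. Tensoring with $E^{0,2}_{\bar Z}$ and using the identification $E^{1,1}_{\diamond\,\bar Z}\otimes E^{0,2}_{\bar Z}\cong \varphi^*T_\bsM(-\log S_\bsM)$ from (a), this splits the tautological sequence \eqref{taut}, forces $\check N_{\bar Z/\bsM}$ to be locally free and produces the desired decomposition $\varphi^*T_\bsM(-\log S_\bsM)\cong T_{\bar Z}(-\log S_{\bar Z})\oplus \check N_{\bar Z/\bsM}$.

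The hard part is the extension step in (a): although the direct-factor picture is formal on $\bar Z_0$, one needs to be sure that Hartog applied to the idempotent actually produces a locally free direct factor of $E^{1,1}_{\bar Z}$ (and not just a reflexive subsheaf), and that the resulting image genuinely coincides with the maximal extension of $\varphi^*T_\bsM(-\log S_\bsM)$ from Notations~\ref{del}. This is the point at which the Mumford nature of $\bsM$ enters essentially, via the unipotent-monodromy hypothesis and the Deligne extension, which guarantee the local freeness of $E^{1,1}_{\bar Z}$ and thereby legitimise the extension of~$p$.
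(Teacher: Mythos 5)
Your argument is correct and follows essentially the same route as the paper: reduce to the known description of the Higgs field on $\bsM$ (isomorphism in the $\SO(n,2)$ case, Remark \ref{weight2} in the $\SU(n,1)$ case), pull back to the open set $\bar{Z}_0$ where $\varphi$ is a morphism using the compatibility of the Deligne extension with pullbacks, and extend across the codimension $\geq 2$ locus using the local freeness of $(E_{\bar Z},\theta_{\bar Z})$; your idempotent-extension step is just a more explicit version of the paper's terse extension claim. Part d) is likewise the paper's argument, namely that $T_{\bar Z}(-\log S_{\bar Z})\otimes E^{2,0}_{\bar Z}$ (via Lemma \ref{decomposition}) and $\varphi^*T_{\bsM}(-\log S_\bsM)\otimes E^{2,0}_{\bar Z}$ (via part a)) are both direct factors of $E^{1,1}_{\bar Z}$, which splits the sequence \eqref{taut}.
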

\begin{proof}
If $\varphi$ is an isomorphism, hence if $\bar{Z}=\bsM$, the properties a), b) and c) have been verified in Lemma \ref{decomposition}. 

Let $\bar{Z}_0$ denote the largest open subscheme of $\bar Z$ for which  
$\varphi^{-1}(S_\bsM)|_{{\bar Z}_0}$ is a non-singular divisor and $\varphi|_{{\bar Z}_0}$ a morphism.
The Deligne extension is compatible with pullback under morphisms, and a), b) and c) hold true on $\bar{Z}_0$. Knowing this, and using the fact that the Higgs bundles induced by the Deligne extension of a variation of Hodge structures are locally free, one obtains a) and the description of the Higgs bundles in b) and c) extend to $\bar{Z}$.
  
The decomposition in Part d) follows, since in this case both,
$$
T_{\bar Z}(-\log S_{\bar Z})\mbox{ \ \ and \ \ }\varphi^* T_{\bsM}(-\log S_\bsM),
$$ 
are direct factors of $E^{1,1}_{\bar Z}\otimes {E^{2,0}_{\bar Z}}^{-1}.$
\end{proof}
We will need the Simpson correspondence, hence the notion of slopes of coherent sheaves.
Let $\sL$ be an invertible sheaf, nef and ample with respect to $Z$.
For any rank $r$ coherent sheaf $\sF$ on $\bar Z$ define the degree and the slope with respect to $\sL$ as
\begin{equation}\label{slope}
\deg_{\sL}(\sF) := \ch_1(\sF)\cdot \ch_1(\sL)^{d-1} \mbox{ \ \ and \ \ } 
\mu_{\sL}:=\frac{\deg_{\sL}(\sF)}{r}.
\end{equation}
As we will see in the next Theorem, the generalized Hirzebruch-H\"ofer inequality is an inequality of Arakelov type similar to those considered in \cite{stz03} and \cite{vz03} over curves and in \cite{vz05} and \cite{mvz07} for variations of Hodge structures of weight one.  
\begin{theorem} (Hirzebruch-H\"ofer's relative proportionality inequality)\label{HHP} \\
Keeping the assumptions and notations stated in \ref{assSect2}, one finds:
\begin{enumerate}
\item[ i)] If $\sM$ is of $\SO(n,2)$-type and if the Griffiths-Yukawa coupling $\theta_{\bar Z}^2\neq 0$ then
\begin{multline*}\hspace*{1cm}
d \cdot \deg_{\omega_{\bar Z}(S_{\bar Z})}(\check{N}_{\bar Z/\bsM})+ (n-d)\cdot \deg_{\omega_{\bar Z}(S_{\bar Z})}(\Omega^1_{\bar Z}(\log S_{\bar Z})) =\\ n\cdot\big( \deg_{\omega_{\bar Z}(S_{\bar Z})}(\Omega^1_{\bar Z}(\log S_{\bar Z})) - d\cdot \deg_{\omega_{\bar Z}(S_{\bar Z})}(E^{2,0}_{\bar Z})\big)\geq 0.
\end{multline*} 
The equality implies that $Z$ is a Shimura subvariety of $\sM$ of Hodge type for 
$\SO(d,2)$.\vspace{.1cm}
\item[ ii)] If $\sM$ is of type $\SO(n,2)$ and if the Griffiths-Yukawa coupling $\theta_{\bar Z}^2$ is zero then
\begin{multline*}\hspace*{1cm}
(d+1) \cdot \deg_{\omega_{\bar Z}(S_{\bar Z})}(\check{N}_{\bar Z/\bsM})+ (n-d-1)\cdot \deg_{\omega_{\bar Z}(S_{\bar Z})}(\Omega^1_{\bar Z}(\log S_{\bar Z})) =\\ n\cdot\big( \deg_{\omega_{\bar Z}(S_{\bar Z})}(\Omega^1_{\bar Z}(\log S_{\bar Z})) - (d+1)\cdot \deg_{\omega_{\bar Z}(S_{\bar Z})}(E^{2,0}_{\bar Z})\big)\geq 0.
\end{multline*} 
The equality implies that $Z$ is either the the deformation of a Shimura curve in $\sM$ or, if $\dim(Z)>1$, that $Z$ is a Shimura subvariety of $\sM$ of Hodge type for $\SU(d,1)$.
\vspace{.1cm}
\end{enumerate}
\end{theorem}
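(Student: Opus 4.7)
The proof has three ingredients: a Chern-class identity giving the leftmost equality, an Arakelov-type estimate for the inequality via a saturated sub-Higgs-sheaf, and a polystability argument for the equality statement. I first observe that the Higgs field on $\bsM$ gives an isomorphism $T_\bsM(-\log S_\bsM)\otimes E^{2,0}_\bsM\xrightarrow{\sim}E^{1,1}_\bsM$ of rank $n$, and self-duality of the polarized VHS together with $\ch_1(E_\bsM)=0$ forces $\ch_1(E^{1,1}_\bsM)=0$. Pulling back one obtains $\ch_1(\varphi^*T_\bsM(-\log S_\bsM))=-n\cdot\ch_1(E^{2,0}_{\bar Z})$, and inserting this into the tautological sequence \eqref{taut} yields
\[
\deg_\sL(\check N_{\bar Z/\bsM})=\deg_\sL(\Omega^1_{\bar Z}(\log S_{\bar Z}))-n\cdot\deg_\sL(E^{2,0}_{\bar Z}),
\]
where $\sL=\omega_{\bar Z}(S_{\bar Z})$. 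Direct algebra then produces the leftmost equal sign in both parts.

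For the inequality I construct a saturated sub-Higgs-sheaf $F\subset (E_{\bar Z},\theta_{\bar Z})$ generated by $E^{2,0}_{\bar Z}$: set $F^{2,0}:=E^{2,0}_{\bar Z}$, let $F^{1,1}$ be the saturation in $E^{1,1}_{\bar Z}$ of the image of $\theta_{\bar Z}:T_{\bar Z}(-\log S_{\bar Z})\otimes E^{2,0}_{\bar Z}\to E^{1,1}_{\bar Z}$, which is injective because $d\varphi$ is generically injective, and let $F^{0,2}$ be the saturation inside the line bundle $E^{0,2}_{\bar Z}$ of the image of $\theta_{\bar Z}(F^{1,1}\otimes T_{\bar Z}(-\log S_{\bar Z}))$. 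In Case i) the hypothesis $\theta^{(2)}_{\bar Z}\neq 0$ forces $F^{0,2}=E^{0,2}_{\bar Z}$ and $\rk(F)=d+2$, whereas in Case ii) one has $F^{0,2}=0$ and $\rk(F)=d+1$. Using $E^{0,2}_{\bar Z}=(E^{2,0}_{\bar Z})^{-1}$ and $\deg_\sL(F^{1,1})\geq -\deg_\sL(\Omega^1_{\bar Z}(\log S_{\bar Z}))+d\cdot\deg_\sL(E^{2,0}_{\bar Z})$ (since $F^{1,1}$ contains $T_{\bar Z}(-\log S_{\bar Z})\otimes E^{2,0}_{\bar Z}$ and saturation does not decrease $\sL$-degrees), one obtains
\[
\deg_\sL(F)\geq r\cdot\deg_\sL(E^{2,0}_{\bar Z})-\deg_\sL(\Omega^1_{\bar Z}(\log S_{\bar Z})),
\]
with $r=d$ in Case i) and $r=d+1$ in Case ii). Simpson's theorem, in the logarithmic form used in \cite{vz03} and \cite{vz05}, asserts that a logarithmic Higgs bundle coming from a polarized VHS with unipotent monodromies at infinity is $\mu_\sL$-polystable of slope zero for any $\sL$ nef and ample with respect to $Z$, so $\deg_\sL(F)\leq 0$. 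Combining the two bounds and multiplying by $n$ produces the stated inequality.

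For the equality, $\deg_\sL(F)=0$ together with the polystability of $(E_{\bar Z},\theta_{\bar Z})$ forces $F$ to be a direct Higgs-summand, and under the Simpson correspondence this lifts to a splitting $\varphi^*\W=\W_Z\oplus\U_Z$ in which $\U_Z$ is unitary and concentrated in bidegree $(1,1)$, since its Higgs field vanishes. Equality in the bound on $\deg_\sL(F^{1,1})$ further forces the inclusion $T_{\bar Z}(-\log S_{\bar Z})\otimes E^{2,0}_{\bar Z}\hookrightarrow F^{1,1}$ to be an isomorphism away from a subscheme of codimension at least two, so the Higgs bundle attached to $\W_Z$ on $Z$ satisfies condition a) of Lemma \ref{decomposition} i) in Case i), and of Lemma \ref{decomposition} ii) in Case ii). The equivalence a) $\Leftrightarrow$ b) of that lemma then identifies $Z$ as a Shimura subvariety of Hodge type for $\SO(d,2)$ in Case i), and as a deformation of a Shimura subvariety of type $\SU(d,1)$ in Case ii), which itself is of Hodge type once $d>1$. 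The main technical obstacle is the logarithmic polystability of $(E_{\bar Z},\theta_{\bar Z})$ for a polarization $\sL$ that is only ample with respect to $Z$; this is where the nefness of $\Omega^1_{\bar Z}(\log S_{\bar Z})$, the unipotent monodromies, the canonical Deligne extension and Yau's uniformization of $Z$ all play an essential role.
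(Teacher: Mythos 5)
Your argument is correct and follows essentially the same route as the paper: the same Chern-class computation of $\deg(\check N_{\bar Z/\bsM})$ via the tautological sequence \eqref{taut}, the same saturated sub-Higgs sheaf generated by $E^{2,0}_{\bar Z}$ bounded through Simpson polystability in the logarithmic, non-ample-polarization form of \cite[Proposition 2.4]{vz05}, and the same reduction of the equality case to Lemma \ref{decomposition} after splitting off a unitary $(1,1)$-complement. The only notable difference is cosmetic: in case i) the paper additionally invokes Yau's uniformization theorem (polystability of $S^2\Omega^1_{\bar Z}(\log S_{\bar Z})$) before obtaining the splitting, a detour your direct degree-zero plus polystability argument bypasses.
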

As in Remark \ref{weight2} on a Shimura variety $\sM$ of type $\SU(n,1)$ we consider the
weight two variation of Hodge structures with logarithmic Higgs bundle $(E_\bsM,\theta_\bsM)$ 
given as the direct sum of $(H,\tau)$ and its dual.
\begin{addendum}\label{HHPAD} \ 
\begin{enumerate}
\item[ iii)] If $\sM$ is of type $\SU(n,1)$, then the Griffiths-Yukawa coupling $\theta_{\bar Z}^2$ is zero and
\begin{multline*}\hspace*{1cm}
(d+1) \cdot \deg_{\omega_{\bar Z}(S_{\bar Z})}(\check{N}_{\bar Z/\bsM})+ (n-d)\cdot \deg_{\omega_{\bar Z}(S_{\bar Z})}(\Omega^1_{\bar Z}(\log S_{\bar Z})) =\\ (n+1)\cdot\big( \deg_{\omega_Z(S)}(\Omega^1_{\bar Z}(\log S_{\bar Z})) - (d+1)\cdot \deg_{\omega_{\bar Z}(S_{\bar Z})}(E^{2,0}_{\bar Z})\big)\geq 0.
\end{multline*} 
Again the equality implies that $Z$ is either the deformation of a Shimura curve in $\sM$ or, if $\dim(Z)>1$, that $Z$ is a Shimura subvariety of $\sM$ of Hodge type for $\SU(d,1)$.
\end{enumerate}
\end{addendum}
\begin{proof}[Proof of Theorem \ref{HHP} and Addendum \ref{HHPAD}]  All the arguments will concern $\bar Z$,
so for simplicity we will drop the lower index ${}_{\bar Z}$ for the Higgs bundles on $\bar Z$ and we will write
$\deg$ and $\mu$ instead of  $\deg_{\omega_{\bar Z}(S_{\bar Z})}$ and $\mu_{\omega_{\bar Z}(S_{\bar Z})}$.

Let us first show the equalities on the left hand sides. 
We know that 
$$
\varphi^*T_{\bsM}(-\log S_{\bsM})= E^{1,1}_\diamond \otimes E^{0,2}
$$ 
is a direct factor of $E^{1,1}\otimes E^{0,2}$. In Theorem \ref{HHP} both coincide and $\deg(E^{1,1})=0$. 
The exact sequence \eqref{taut} together with \ref{decomposition} gives then the equality
$$
-\deg(\Omega^1_{\bar Z}(\log S_{\bar Z})) +
\deg(\check{N}_{\bar{Z}/\bsM}) = n \cdot \deg(E^{0,2})= - n \cdot \deg(E^{2,0}),
$$
as claimed in i) and ii). 

For the Addendum we use the description of the Higgs bundle
of the weight two variation of Hodge structures $\W$ on $\sM$ in Remark \ref{weight2}. It is the direct sum
of two sub Higgs bundles, one in bidegree $(2,0)$ and $(1,1)$, the other in bidegrees $(1,1)$ and $(0,2)$.
So $(E,\theta)$ is the sum of $\varphi^*(H,\tau)$ and $\varphi^*(H^\vee,\tau^\vee)$, with $E^{2,0}=\varphi^*H^{1,0}$ invertible and with $E^{1,1}=\varphi^*H^{0,1}\oplus \varphi^*{H^{0,1}}^\vee$. Here 
$$
E^{1,1}_\diamond\otimes {E^{2,0}}^{-1} =\varphi^*(H^{0,1}\otimes {H^{1,0}}^{-1}) \cong \varphi^*T_{\bsM}(-\log S_{\bsM}).
$$
Since $(H,\tau)$ is the Higgs bundle of a local systems on $\sM$, its first Chern class is zero. 
The rank of $H^{0,1}$ is $n$ and therefore
\begin{multline*}
\deg(\varphi^* T_{\bsM}(-\log S_{\bsM})) = \deg(\varphi^*H^{0,1}) - n\cdot \deg(\varphi^*H^{1,0})=\\
-(n+1)\cdot \deg(\varphi^*H^{1,0})=
-(1+n) \cdot \deg(E^{2,0}).
\end{multline*}
The exact sequence \eqref{taut} together with \ref{decomposition} implies that
$$
-\deg(\Omega^1_{\bar Z}(\log S_{\bar Z})) + \deg(\check{N}_{\bar{Z}/\bsM})= -(n+1)\cdot \deg(E^{2,0}),
$$
hence the left hand equality in the Addendum \ref{HHPAD}.\\[.2cm]
The method to obtain the inequality and the interpretation of the extremal case is parallel to the one used in \cite{stz03} for the case $\dim(Z)=1$:\\[.2cm]
i) Consider the largest saturated Higgs subbundle $(F,\theta)$ of $(E,\theta)$ containing $E^{2,0}$. Hence writing as in 
\cite[Definition 1.7]{vz05} ${\rm Im'}$ for the saturated image, we get
$$
(F,\theta)=(E^{2,0}\oplus {\rm Im'}(E^{2,0}\otimes T_{\bar Z}(-\log S_{\bar Z}))\oplus{\rm Im'}(E^{2,0}\otimes S^2T_{\bar Z}(-\log S_{\bar Z})),\theta).
$$
The description of the Higgs bundle in Lemma \ref{pullback} b) implies that the saturated image 
${\rm Im'}(E^{2,0}\otimes S^2T_{\bar Z}(-\log S_{\bar Z}))$ is non-zero, hence it is isomorphic to
$E^{0,2}$. 

By Simpson \cite{sim92} $(E,\theta)$ is a $\mu$-polystable Higgs bundle and therefore
\begin{multline*}
\deg(E^{2,0}) +  \deg({\rm Im'}(E^{2,0}\otimes T_{\bar Z}(-\log S_{\bar Z})))\\
+ \deg({\rm Im'}(E^{2,0}\otimes S^2T_{\bar Z}(-\log S_{\bar Z})))=\
\deg(F) \leq 0.
\end{multline*}
Since $F^{2,0}=E^{2,0}$ and $F^{0,2}=E^{0,2}$ are dual to each other $\deg(F^{1,1})=\deg(F) \le 0$. 

The morphism $\varphi:Z\to \sM$ is generically finite over its image, hence the natural inclusion
$T_{\bar Z}(-\log S_{\bar Z})\to \varphi^* T_{\bsM}(-\log S_{\bsM})$ is injective and 
\begin{equation}\label{eqiso}
\xymatrix{
\theta: E^{2,0}\otimes T_{\bar Z}(-\log S_{\bar Z})\ar@{^{(}->}[r] & {\rm Im'}(E^{2,0}\otimes T_{\bar Z}(-\log S_{\bar Z}))}
\end{equation}
is an isomorphism over some open dense subscheme. Since $\omega_{\bar Z}(S_{\bar Z})$ is nef, this implies that $\deg( E^{2,0} \otimes T_{\bar Z}(-\log S_{\bar Z})) \le \deg F^{1,1} \le 0$. 
From this we obtain the Arakelov inequality 
$$
\deg(E^{2,0}) \le - \mu(T_{\bar Z}(-\log S_{\bar Z}))=\frac{\deg(\Omega^1_{\bar Z}(\log S_{\bar Z}))}{d}
$$ 
stated in i). Assume now, that this is an equality. Since $\omega_{\bar Z}(S_{\bar Z})$ is nef and ample with respect to $Z$, this forces the inclusion in \eqref{eqiso} to be an isomorphism on $Z$. In particular the two sheaves
$$
E^{2,0}\otimes T_{\bar Z}(-\log S_{\bar Z})\mbox{ \ \ and \ \ } F^{1,1}={\rm Im'}(E^{2,0}\otimes T_{\bar Z}(-\log S_{\bar Z}))
$$
are $\mu$-equivalent, as defined in \cite[Definition 1.7]{vz05}.

This  equality also implies that  
$$
\deg(E^{0,2})=\deg({\rm Im'}(E^{2,0}\otimes S^2T_{\bar Z}(-\log S_{\bar Z}))) = \deg(E^{2,0})+2\mu(T_{\bar Z}(-\log S_{\bar Z})).
$$
By Yau's Uniformization Theorem \cite{ya93} the sheaf $S^2 \Omega^1_{\bar Z}(\log S_{\bar Z})$ is $\mu$-polystable, and hence the saturated image of
\begin{equation}\label{image}
E^{2,0}\otimes S^2T_{\bar Z}(-\log S_{\bar Z})\longrightarrow E^{0,2}
\end{equation}
has to be $\mu$-equivalent to one of the direct factors. Again, the ampleness of $\omega_{\bar Z}(S_{\bar Z})$ with respect to $Z$ implies that the  morphism in \eqref{image} is surjective over $Z$. 

By \cite[Proposition 2.4]{vz05} we are allowed to apply Simpson's Higgs polystability, proven in
\cite{sim92}, although the slopes are taken with respect to a non-ample invertible sheaf. Since $F\subset E$
is $\mu$-equivalent to its saturated image and of degree zero, since $E^{2,0}=F^{2,0}$ and
$E^{0,2}=F^{0,2}$, one gets a direct sum decomposition
$$
(E,\theta)=(F,\theta)\oplus (U^{1,1},0)
$$ 
of Higgs bundles. The orthogonality of the splitting with respect to the Hodge metric implies that $(U^{1,1},0)$ comes from a unitary local system. By \ref{decomposition} $Z$ is  a subvariety of $\SO(d, 2)$ of Hodge type.\\[.2cm]
ii) The proof is similar. Here the saturated Higgs subsheaf $(F,\theta)$ generated by $E^{2,0}$ is given by
$$
(F, \theta)=(E^{2,0}\oplus {\rm Im'}(E^{2,0}\otimes T_{\bar Z}(-\log S_{\bar Z})),\theta).
$$ 
Then
$$
\deg(E^{2,0}) +  \deg({\rm Im'}(E^{2,0}\otimes T_{\bar Z}(-\log S_{\bar Z})))\leq 0
$$
and the corresponding Arakelov inequality says
$$ 
\deg E^{2,0}\leq \frac{\deg\Omega^1_{\bar Z}(\log S_{\bar Z})}{d+1}.
$$
The equality holds if and only there is a decomposition
$$ (E,\theta)=(F,\theta)\oplus (F,\theta)^\vee\oplus (U^{1,1},0),$$
such that 
$$\theta: E^{2,0}\otimes T_{\bar Z}(-\log S_{\bar Z})\to {\rm Im'}(E^{2,0}\otimes T_{\bar Z}(-\log S_{\bar Z}))$$
is an isomorphism over $Z$, hence a $\mu$-equivalence. Again $(U^{1,1},0)$ is the Higgs bundle of a unitary local system in this case, and by \ref{decomposition} $Z$ is a Shimura subvariety of Hodge type
for $\SU(d,1)$ if $d\geq 2$ or a deformation of such for $d=1$.\\[.2cm]
iii) Finally let $\sM$ be a Shimura variety of $\SU(n,1)$-type. Using the notation from Remark \ref{weight2}
the uniformizing Higgs bundle of weight one has the Higgs field
$$
\begin{CD}
\tau: T_{\sM}\otimes H^{1,0} @> \simeq >> H^{0,1}. 
\end{CD}
$$
In \cite{vz05} we proved the Arakelov inequality, saying that 
\begin{equation}\label{ara1}
(d+1)\cdot \deg{E^{2,0}}=(d+1)\cdot \deg(\varphi^*(H^{1,0})) \leq \deg(\Omega^1_{\bar Z}(\log S_{\bar Z})),
\end{equation}
and that the equality forces $Z$ to be a Shimura subvariety of Hodge type for $\SU(d,1)$.
In the present situation the proof is quite simple.
Let $(H_{\bar Z},\tau_{\bar Z})$ denote the Higgs field on $\bar Z$ induced by the Deligne extension.
Take the sub Higgs sheaf $(F,\theta)$  generated by $H^{1,0}_{\bar Z}$. Again Simpson shows that the degree of  
$$(F,\theta)=(E^{1,0}\oplus {\rm Im'}(T_{\bar Z}(-\log S_{\bar Z})\otimes E^{1,0\vee}),\theta)$$
is non-positive, hence that \eqref{ara1} holds. 

The equality implies that $Z\subset \sM$ is totally geodesic. Since $\sM$ is of type $\SU(n,1)$ the sheaf
$\Omega^1_{\bsM}(\log S_\bsM)$ is ample with respect to $\sM$. Then the subvariety $Z\subset \sM$ is rigid.
Hence by Theorem~\ref{AbdMo} $Z$ is a Shimura subvariety of Hodge type for $\SU(d,1)$.
\end{proof}

\begin{remark}\label{HHPRC}
We say that the Hirzebruch-H\"ofer proportionality (HHP) holds, if the inequalities\vspace{.1cm}
\begin{enumerate}
\item[ i)] $\mu_{\omega_{\bar Z}(S_{\bar Z})}(\check{N}_{\bar Z/\bsM}) \geq \mu_{\omega_{\bar Z}(S_{\bar Z})}(T_{\bar Z}(-\log S_{\bar Z}))$\vspace{.1cm}
\item[ ii)] 
$(d+1) \cdot \deg_{\omega_{\bar Z}(S_{\bar Z})}(\check{N}_{\bar Z/\bsM})\geq (n-d-1)\cdot \deg_{\omega_{\bar Z}(S_{\bar Z})}(T_{\bar Z}(-\log S_{\bar Z}))$\vspace{.1cm}
\item[ iii)] $
(d+1) \cdot \deg_{\omega_{\bar Z}(S_{\bar Z})}(\check{N}_{\bar Z/\bsM})\geq (n-d)\cdot \deg_{\omega_{\bar Z}(S_{\bar Z})}(T_{\bar Z}(-\log S_{\bar Z}))$\vspace{.1cm}
\end{enumerate}
in Theorem \ref{HHP} i), ii) and in the Addendum \ref{HHPAD} iii) are equalities. 

If $Z$ is a divisor in $\sM$, hence $n=d+1$, then the HHP in Theorem \ref{HHP}, ii), just says that
the degree of the logarithmic normal sheaf is non-negative, and that $Z$ is a Shimura subvariety of Hodge type, if and only if it is zero. 
\end{remark}
\begin{proof}[Proof of Theorem \ref{HHP0}]
For $\bar{Y}=\bsM$ and for a non-singular curve $\bar{C}=\bar{Z}\subset \bar{Y}$ the equality of Chern numbers \eqref{tautchern} gives $\deg(\check{N}_{{\bar C}/{\bar Y}}) = \bar{C}.\bar{C} + S_{\bar{C}}-S_{\bar{Y}}.\bar{C}$. 
Since $n=2$ and $d=1$ the inequality i) in \ref{HHPRC} says that
$$
\bar{C}.\bar{C}+\deg(S_{\bar{C}})-S_{\bar{Y}}.\bar{C}\geq 
\deg(-K_{\bar{C}}-S_{\bar{C}})=(-K_{\bar Y}-\bar{C}).\bar{C}-
\deg(S_{\bar{C}}),
$$
as stated in \eqref{eqp1}. The inequality iii) translates to
$$
2\cdot \bar{C}.\bar{C}+2\cdot \deg(S_{\bar{C}})-2\cdot S_{\bar{Y}}.\bar{C}\geq  
(-K_{\bar Y}-\bar{C}).\bar{C}-
\deg(S_{\bar{C}}),
$$
hence to \eqref{eqp2}. 
\end{proof}
The remaining inequality ii) is the additional inequality \eqref{eqp3}. However, as explained in the introduction, the assumptions made for ii) imply that $Y$ is the product of two curves and $C$ one of the fibres,
so $\bar{C}.\bar{C}=0$.

\section{Subvarieties of $\sM$  containing Special subvarieties}\label{SSV}

From now on $\sM$ will be a Shimura variety of type $\SO(n,2)$. We consider a closed subvariety $Z\subset \sM$, and we study subvarieties 
$W\subset Z$ which are Shimura subvarieties of $\sM$ of Hodge type. 
We hope that the existence of sufficiently many of them forces $Z$ to be itself a Shimura subvariety of Hodge type. In Section \ref{char} we will see, that this hope is fulfilled if their codimension in $Z$ is one.

\begin{assumptions}\label{assSect3}
Consider a projective manifold $\bar Z$ and the complement $Z$ of a strict normal crossing divisor $S_{\bar Z}$. Assume one has generically finite morphisms
$$
\begin{CD}
W @> \psi >>  Z @> \varphi >> \sM \mbox{ \ \ and \ \ } \sigma=\varphi\circ \psi,
\end{CD}
$$
such that $\sigma(W)$ is not contained in the singular locus of $\varphi(Z)$.
We write 
$$
n=\dim(\sM),  \ \ \ d=\dim(Z)\mbox{ \ \ and \ \ }m=\dim(W).
$$
Assume that $\sM$ is a Shimura variety of type $\SO(n,2)$, that $W$ is the deformation of
a Shimura subvariety of Hodge type, and that $\sigma$ is induced by a morphism of groups. In particular its image is non-singular and $\sigma$ is \'etale over the image.
We choose Mumford compactifications $\bsM=\sM \cup S_\bsM$ and $\bar W= W \cup S_{\bar W}$ and write
again 
$$
\begin{CD}
{\bar W} @> \psi >>  {\bar Z} @> \varphi >> \bsM
\end{CD}
$$
for the induced rational maps. We keep the assumption that the uniformizing variation of Hodge structures
$\V$ on $\sM$ has unipotent local monodromy at infinity, and we decompose $\V$ as a direct sum $\W\oplus \U$, where $\U$ is the largest unitary subvariation of Hodge structures.  
$$
(E_{\bar W},\theta_{\bar W}), \ \ (E_{\bar Z},\theta_{\bar Z})\mbox{ \ \ and \ \ }(E_{\bsM},\theta_{\bsM})
$$ 
denote the Higgs bundles induced by the Deligne extension of $\sigma^*\W$, $\varphi^*\W$ and $\W$.
\end{assumptions}
Recall that by \ref{del} the pullbacks under rational maps are just the reflexive hulls of the pullback to the largest open subscheme, where the morphisms are defined. In particular one has on $\bar W$ the 
natural maps
\begin{equation}\label{taut1} 
 T_{\bar W}(-\log S_{\bar W}) \longrightarrow \psi^*T_{\bar Z}(-\log S_{\bar Z}) \longrightarrow \sigma^*T_{\bsM}(-\log S_\bsM).
\end{equation}
By Lemma \ref{pullback} the sheaf $\sigma^*T_{\bsM}(-\log S_\bsM)$ is locally free, whereas $\psi^*T_{\bar Z}(-\log S_{\bar Z})$ is just torsion-free. Since $\sigma(W)$ meets the non-singular locus of $\varphi(Z)$
both morphisms in \eqref{taut1} are injective. We define again the logarithmic normal sheaf by the exact sequence
\begin{equation}\label{taut2}
0 \longrightarrow T_{\bar W}(-\log S_{\bar W}) \longrightarrow  \psi^*T_{\bar Z}(-\log S_{\bar Z})\longrightarrow \check{N}_{\bar W/\bar Z}\longrightarrow 0. 
\end{equation}
\begin{lemma}\label{torsionfree} \ 
\begin{enumerate}
\item[a.] The sheaf $\check{N}_{\bar W/\bar Z}$ is torsion-free and the exact sequence \eqref{taut2}
splits.
\item[b.] Let $\check{N}^\natural_{\bar W/\bar Z}$ be the saturated hull
of $\check{N}_{\bar W/\bar Z}$ in $\sigma^*T_{\bsM}(-\log S_\bsM)$, i.e. 
$$
\check{N}^\natural_{\bar W/\bar Z}={\rm Ker}\big[\sigma^*T_{\bsM}(-\log S_\bsM)\to \big(\sigma^*T_{\bsM}(-\log S_\bsM)/_{\check{N}_{\bar W/\bar Z}}\big)/_{\rm torsion}\big].
$$
Then $\mu_{\omega_{\bar W}(S_{\bar W})}(\sF)(\check{N}_{\bar W/ \bar Z})\leq \mu_{\omega_{\bar W}(S_{\bar W})}(\check{N}^\natural_{\bar W/ \bar Z})$.
\end{enumerate}
\end{lemma}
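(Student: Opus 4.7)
The plan for part (a) is to lift everything into the ambient locally free sheaf $\sigma^*T_{\bsM}(-\log S_\bsM)$ and exploit the direct sum decomposition coming from Lemma \ref{pullback}(d). Since $W$ is a deformation of a Shimura subvariety of Hodge type (by \ref{assSect3}), that lemma provides
$$
\sigma^* T_{\bsM}(-\log S_\bsM) \;\cong\; T_{\bar W}(-\log S_{\bar W}) \oplus \check{N}_{\bar W/\bsM},
$$
with $\check{N}_{\bar W/\bsM}$ locally free. Write $p$ and $q$ for the corresponding projections. The composition of the inclusion $i:\psi^*T_{\bar Z}(-\log S_{\bar Z}) \hookrightarrow \sigma^*T_{\bsM}(-\log S_\bsM)$ from \eqref{taut1} with $p$ restricts to the identity on the subsheaf $T_{\bar W}(-\log S_{\bar W})\subset\psi^*T_{\bar Z}(-\log S_{\bar Z})$, because the first arrow in \eqref{taut1} is the natural embedding. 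This $p\circ i$ is the retraction that splits \eqref{taut2}. Moreover, $q\circ i$ has kernel $T_{\bar W}(-\log S_{\bar W})$ and therefore identifies $\check{N}_{\bar W/\bar Z}$ with a subsheaf of the locally free sheaf $\check{N}_{\bar W/\bsM}$; subsheaves of locally free sheaves are torsion-free, which settles (a).

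For part (b), the map $q\circ i$ provided by (a) gives a canonical inclusion $\check{N}_{\bar W/\bar Z}\hookrightarrow \check{N}_{\bar W/\bsM}\subset \sigma^*T_{\bsM}(-\log S_\bsM)$; this is the embedding tacitly used in the definition of $\check{N}^\natural_{\bar W/\bar Z}$. Because $T_{\bar W}(-\log S_{\bar W})$ is torsion-free, a short computation with the decomposition from (a) shows that the torsion of $\sigma^*T_{\bsM}(-\log S_\bsM)/\check{N}_{\bar W/\bar Z}$ lies entirely in the $\check{N}_{\bar W/\bsM}$-summand, so $\check{N}^\natural_{\bar W/\bar Z}$ is identified with the saturation of $\check{N}_{\bar W/\bar Z}$ inside $\check{N}_{\bar W/\bsM}$. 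In particular $\check{N}^\natural_{\bar W/\bar Z}$ has the same rank as $\check{N}_{\bar W/\bar Z}$, and the quotient $\mathcal{T}:=\check{N}^\natural_{\bar W/\bar Z}/\check{N}_{\bar W/\bar Z}$ is a torsion sheaf on the smooth variety $\bar W$. Its first Chern class is then represented by an effective divisor supported on the codimension-one part of $\mathrm{Supp}(\mathcal{T})$, and since $\omega_{\bar W}(S_{\bar W})$ is nef one gets $\deg_{\omega_{\bar W}(S_{\bar W})}(\mathcal{T})\geq 0$. Dividing the resulting inequality $\deg(\check{N}^\natural_{\bar W/\bar Z})\geq \deg(\check{N}_{\bar W/\bar Z})$ by the common rank yields the slope inequality.

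The genuine content of the argument is concentrated in (a); the splitting depends crucially on $W$ being a deformation of a Shimura subvariety of Hodge type, since exactly that hypothesis makes $\sigma^*T_{\bsM}(-\log S_\bsM)$ locally free and allows one to split off $T_{\bar W}(-\log S_{\bar W})$ as a direct factor. Part (b) is then essentially formal, provided one is careful to use the embedding of $\check{N}_{\bar W/\bar Z}$ furnished by (a), so that the ambient saturation in the statement is well defined and coincides with the saturation inside $\check{N}_{\bar W/\bsM}$. I do not anticipate any serious obstacle once these identifications are in place.
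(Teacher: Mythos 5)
Your argument is correct and follows essentially the same route as the paper: both use Lemma \ref{pullback} d) to split off $T_{\bar W}(-\log S_{\bar W})$ from $\sigma^*T_{\bsM}(-\log S_\bsM)$, obtain the splitting of \eqref{taut2} by restricting the retraction, deduce torsion-freeness from the inclusion $\check{N}_{\bar W/\bar Z}\subset \check{N}_{\bar W/\bsM}$ into a locally free sheaf, and get b) formally from the nefness of $\omega_{\bar W}(S_{\bar W})$. Your write-up just makes explicit the identifications (kernel of $q\circ i$, saturation taken inside $\check{N}_{\bar W/\bsM}$) that the paper leaves implicit.
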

\begin{proof} Since $\sigma: W \to \sM$ maps to a Shimura subvariety of Hodge type, or to a deformation of such a variety, we are allowed to apply Lemma \ref{pullback} d). So there is a surjection
$\eta:\sigma^*T_{\bsM}(-\log S_\bsM)\to T_{\bar W}(-\log S_{\bar W})$ whose restriction to the
subsheaf $T_{\bar W}(-\log S_{\bar W})$ is an isomorphism. So the restriction of $\eta$ to $\psi^*T_{\bar Z}(-\log S_{\bar Z})$ defines 
a splitting of this sheaf as well.

The sheaf $\check{N}_{\bar W/\bar Z}$ is contained in $\check{N}_{\bar W/\bsM}$ and
by Lemma \ref{pullback} the latter is locally free. Part b) follows since $\omega_{\bar W}(S_{\bar W})$ is nef. 
\end{proof}
\begin{theorem}\label{1.6} Under the Assumptions made in \ref{assSect3} one has:
\begin{enumerate}
\item[ i)] If the Griffiths-Yukawa coupling is non-zero on $W$, then
\begin{equation*}
\mu_{\omega_{\bar W}(S_{\bar W})}(\check{N}^\natural_{\bar W/ \bar Z})\leq \mu_{\omega_{\bar W}(S_{\bar W})}(T_{\bar W}(-\log S_{\bar W}))<0.
\end{equation*}
\item[ ii)]  If the Griffiths-Yukawa coupling on $Z$ vanishes, then
\begin{equation*}
\frac{\deg_{\omega_{\bar W}(S_{\bar W})}(\check{N}^\natural_{\bar W/\bar Z})}{\rk \check{N}_{\bar W/\bar Z}} \leq \frac{\deg_{\omega_{\bar W}(S_{\bar W})}(T_{\bar W}(-\log S_{\bar W}))}{m+1}<0.
\end{equation*}
\item[iii)] Assume  that $W \to  \sM$ is the deformation of a Shimura curve of Hodge type, and that $Z\subset\sM$ is a quasi-projective surface.  Then
\begin{equation*}
\deg_{\omega_{\bar W}(S_{\bar W})} \check{N}^\natural_{\bar W/\bar Z}\leq 0.
\end{equation*}
If this is an equality, then the Griffiths-Yukawa coupling along $W$ vanishes and the Griffiths-Yukawa coupling on $Z$ does not vanish.
\end{enumerate}
\end{theorem}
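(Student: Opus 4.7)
The plan is to derive all three inequalities from a single Simpson-polystability estimate on $\bar W$ applied to the pullback of the Higgs subsheaf of $(E_{\bar Z},\theta_{\bar Z})$ generated by $E^{2,0}_{\bar Z}$, combined with the degree formulas of Lemma~\ref{decomposition}. The key preparatory step is to convert normal-sheaf information into Higgs-bundle information via Lemma~\ref{pullback}.

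Applied to $\sigma=\varphi\circ\psi$, parts a), b) and d) of Lemma~\ref{pullback} yield the identifications
\[
\sigma^{*}T_{\bsM}(-\log S_{\bsM})\;\cong\; E^{1,1}_{\bar W}\otimes E^{0,2}_{\bar W},\qquad T_{\bar W}(-\log S_{\bar W})\;\cong\; E^{1,1}_{W,\diamond}\otimes E^{0,2}_{\bar W},
\]
where $E^{1,1}_{W,\diamond}=\mathrm{Im}'\bigl(\theta_{\bar W}\colon E^{2,0}_{\bar W}\otimes T_{\bar W}(-\log S_{\bar W})\to E^{1,1}_{\bar W}\bigr)$ sits as a saturated direct summand of $E^{1,1}_{\bar W}$, and under which $\psi^{*}T_{\bar Z}(-\log S_{\bar Z})$ maps, modulo torsion, to $\psi^{*}E^{1,1}_{Z,\diamond}\otimes E^{0,2}_{\bar W}$, with $E^{1,1}_{Z,\diamond}$ the analogous intrinsic image on $\bar Z$. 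Since $E^{1,1}_{W,\diamond}$ is saturated, saturation commutes with the induced quotient, so
\[
\check{N}^{\natural}_{\bar W/\bar Z}\otimes E^{2,0}_{\bar W}\;\cong\;\bigl(\psi^{*}E^{1,1}_{Z,\diamond}\bigr)^{\mathrm{sat}}/E^{1,1}_{W,\diamond},
\]
the saturation on the right being taken inside $E^{1,1}_{\bar W}$.

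Next, as in the proof of Theorem~\ref{HHP}, the Higgs subsheaf of $(E_{\bar Z},\theta_{\bar Z})$ generated by $E^{2,0}_{\bar Z}$ reads $F_{\bar Z}=E^{2,0}_{\bar Z}\oplus E^{1,1}_{Z,\diamond}\oplus\mathrm{Im}'(E^{2,0}_{\bar Z}\otimes S^{2}T_{\bar Z}(-\log S_{\bar Z}))$, and its $(0,2)$-piece is $E^{0,2}_{\bar Z}$ or $0$ according as $\theta^{(2)}_{\bar Z}\neq 0$ or $=0$. Pulling back via $\psi$ and saturating inside $(E_{\bar W},\theta_{\bar W})$ produces a Higgs subsheaf $\hat F$ whose $(1,1)$-piece is $(\psi^{*}E^{1,1}_{Z,\diamond})^{\mathrm{sat}}$. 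Simpson's $\mu_{\omega_{\bar W}(S_{\bar W})}$-polystability of $(E_{\bar W},\theta_{\bar W})$ gives $\deg\hat F\leq 0$, which, together with $\deg E^{2,0}_{\bar W}+\deg E^{0,2}_{\bar W}=0$, reduces to
\[
\deg\bigl(\psi^{*}E^{1,1}_{Z,\diamond}\bigr)^{\mathrm{sat}}\leq 0\ \text{if}\ \theta^{(2)}_{\bar Z}\neq 0,\qquad \deg\bigl(\psi^{*}E^{1,1}_{Z,\diamond}\bigr)^{\mathrm{sat}}\leq-\deg E^{2,0}_{\bar W}\ \text{if}\ \theta^{(2)}_{\bar Z}=0.
\]

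For i) the hypothesis $\theta^{(2)}_{\bar W}\neq 0$ forces $\theta^{(2)}_{\bar Z}\neq 0$, and Lemma~\ref{decomposition} i) gives $\deg E^{1,1}_{W,\diamond}=0$ together with $\deg\omega_{\bar W}(S_{\bar W})=m\deg E^{2,0}_{\bar W}$; for ii) the hypothesis $\theta^{(2)}_{\bar Z}=0$ forces $\theta^{(2)}_{\bar W}=0$, and Lemma~\ref{decomposition} ii) gives $\deg E^{1,1}_{W,\diamond}=-\deg E^{2,0}_{\bar W}$ and $\deg\omega_{\bar W}(S_{\bar W})=(m+1)\deg E^{2,0}_{\bar W}$. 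In both cases, inserting the relevant Simpson bound into the degree formula above yields $\mu_{\omega_{\bar W}(S_{\bar W})}(\check N^{\natural}_{\bar W/\bar Z})\leq-\deg E^{2,0}_{\bar W}<0$, which coincides with $\mu_{\omega_{\bar W}(S_{\bar W})}(T_{\bar W}(-\log S_{\bar W}))$ in case i) and with $\deg_{\omega_{\bar W}(S_{\bar W})}(T_{\bar W}(-\log S_{\bar W}))/(m+1)$ in case ii). For iii) ($m=1$, $d=2$, $\rk\check N_{\bar W/\bar Z}=1$), the scenarios $\theta^{(2)}_{\bar W}\neq 0$ and $\theta^{(2)}_{\bar Z}=0$ are covered by i) and ii) respectively and each yields the strict inequality $\deg\check N^{\natural}_{\bar W/\bar Z}<0$; in the remaining scenario $\theta^{(2)}_{\bar W}=0$ and $\theta^{(2)}_{\bar Z}\neq 0$, Lemma~\ref{decomposition} ii) applied to the curve $W$ still gives $\deg E^{1,1}_{W,\diamond}=-\deg E^{2,0}_{\bar W}$, and the first Simpson bound combined with the degree formula yields $\deg\check N^{\natural}_{\bar W/\bar Z}\leq 0$ with equality iff $\deg(\psi^{*}E^{1,1}_{Z,\diamond})^{\mathrm{sat}}=0$, so that an equality in iii) forces exactly $\theta^{(2)}_{\bar W}=0$ and $\theta^{(2)}_{\bar Z}\neq 0$. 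The main technical obstacle I anticipate is verifying that pullback-then-saturation of a Higgs subsheaf is again Higgs and that saturation commutes with the quotient by $E^{1,1}_{W,\diamond}$; both reduce to $E^{1,1}_{W,\diamond}$ being a saturated direct summand of $E^{1,1}_{\bar W}$, which is granted by Lemma~\ref{pullback} d) under the standing hypothesis that $W$ deforms a Shimura subvariety of Hodge type.
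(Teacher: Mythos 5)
Your proposal is essentially correct, but it follows a noticeably different route from the paper's. The paper proves i)--iii) by exploiting the polystable decomposition of $(E_{\bar W},\theta_{\bar W})$ that comes from the Shimura (or deformation) structure of $W$: it writes $\sigma^*T_{\bsM}(-\log S_\bsM)\cong T_{\bar W}(-\log S_{\bar W})\oplus U^{1,1}\otimes E^{0,2}$ when $\theta^{(2)}_{\bar W}\neq 0$, respectively adds the summand ${E^{1,1}_\diamond}^\vee\otimes E^{0,2}$ when $\theta^{(2)}_{\bar W}=0$, embeds $\check{N}^\natural_{\bar W/\bar Z}$ into the complement of $T_{\bar W}(-\log S_{\bar W})$, and in case ii) uses the vanishing of the Griffiths--Yukawa coupling of $Z$ to force this image into the unitary summand $U^{1,1}\otimes E^{0,2}$, whose semistability of slope $\deg(E^{0,2})$ gives the bound; case iii) is read off from the same embedding because then ${E^{1,1}_\diamond}^\vee\otimes E^{0,2}\cong\sO_{\bar W}$. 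You instead apply Simpson polystability on $\bar W$ to the saturated pullback of the Higgs subsheaf of $(E_{\bar Z},\theta_{\bar Z})$ generated by $E^{2,0}_{\bar Z}$, use the Griffiths--Yukawa hypothesis only to decide whether its $(0,2)$-part is $E^{0,2}$ or $0$, and convert to a statement about $\check{N}^\natural_{\bar W/\bar Z}$ via the degree formulas of Lemma~\ref{decomposition}. This is closer to the bookkeeping the paper performs only in Addendum~\ref{3.1}; it treats i) and ii) uniformly and makes the case analysis in iii) transparent (equality is excluded in the scenarios covered by i) and ii), hence forces $\theta^{(2)}_{\bar W}=0$ and $\theta^{(2)}_{\bar Z}\neq 0$), at the price of not exhibiting where $\check{N}^\natural_{\bar W/\bar Z}$ sits inside $\sigma^*T_{\bsM}(-\log S_\bsM)$, which is what the paper's route delivers and then reuses for the Addendum and for Theorem~\ref{3.2}.

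One caveat: the identification $\check{N}^\natural_{\bar W/\bar Z}\otimes E^{2,0}_{\bar W}\cong(\psi^*F^{1,1})^{\rm sat}/E^{1,1}_{W,\diamond}$ is an overstatement. Saturation of $\check{N}_{\bar W/\bar Z}$ inside $\sigma^*T_{\bsM}(-\log S_\bsM)$ need not commute with the quotient by the direct summand $T_{\bar W}(-\log S_{\bar W})$, even though that summand is saturated (the saturation of the image in the quotient can be strictly larger than the image of the saturation). What is true, and is all your degree count needs, is that $\check{N}^\natural_{\bar W/\bar Z}\otimes E^{2,0}_{\bar W}$ and $E^{1,1}_{W,\diamond}$ are subsheaves of $(\psi^*F^{1,1})^{\rm sat}$ intersecting in zero and of complementary ranks, so that for the nef class $\ch_1(\omega_{\bar W}(S_{\bar W}))$ one gets $\deg(\check{N}^\natural_{\bar W/\bar Z}\otimes E^{2,0}_{\bar W})\leq \deg\big((\psi^*F^{1,1})^{\rm sat}\big)-\deg(E^{1,1}_{W,\diamond})$. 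With the isomorphism replaced by this inequality (and with the remark, as in the paper, that Simpson's polystability for the non-ample polarization $\omega_{\bar W}(S_{\bar W})$ is justified by \cite[Proposition 2.4]{vz05}), your argument goes through; the superfluous ``iff'' clause at the end of your case iii) should simply be dropped, since the scenario analysis already yields the stated conclusion.
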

In order to state what happens if the inequalities in Theorem \ref{1.6} are equalities, we need 
some more notations. Recall that the Higgs field
$$
\theta:  T_{\bsM}(-\log S_\bsM)\otimes  E^{2,0}_\bsM\longrightarrow E^{1,1}_\bsM
$$
is an isomorphism. Consider the tautological sequence for $\varphi:{\bar Z} \to \bsM$  
$$
0 \longrightarrow  T_{\bar Z}(-\log S_{\bar Z})\longrightarrow \varphi^*T_{\bsM}(-\log S_\bsM)\longrightarrow \check{N}_{{\bar Z}/\bsM} \longrightarrow 0.
$$
Via the identification $T_{\bsM}(-\log S_\bsM)\otimes  E^{2,0}_\bsM= E^{1,1}_\bsM$ the inclusion 
$$T_{\bar Z}(-\log S_{\bar Z})\longrightarrow \varphi^*T_{\bsM}(-\log S_\bsM)$$ 
tensorized with ${\rm id}_{E^{2,0}_{\bar Z}}$ is 
$\theta: T_{\bar Z}(-\log S_{\bar Z})\otimes E^{2,0}_{\bar Z} \longrightarrow E^{1,1}_{\bar Z}$.
 
We now consider the saturated Higgs subsheaf $(F,\theta) \subset (E_{\bar Z},\theta_{\bar Z})$, which is generated by $E^{2,0}_{\bar Z}$. So one has
\begin{gather}\label{FHiggs1}
F^{2,0}=E^{2,0}_{\bar Z}, \ \ \  F^{1,1}={\rm Im'}\big(\theta:T_{\bar Z}(-\log S_{\bar Z})\to E^{1,1}_{\bar Z}\big),\\
\label{FHiggs2}
\mbox{and \ \ } F^{0,2}={\rm Im'}\big(\theta^2: S^2T_{\bar Z}(-\log S_{\bar Z})\to E^{0,2}_{\bar Z}\big).
\end{gather}
In particular $F^{0,2}$ is zero if the Griffiths-Yukawa coupling is zero, and equal to $E^{0,2}$ otherwise. 
\begin{addendum}\label{3.1} \ 
In Theorem \ref{1.6} assume that:
\begin{gather}\label{eqi}
\hspace{2cm}\mu_{\omega_{\bar W}(S_{\bar W})}(\check{N}_{\bar W/ \bar Z})=\mu_{\omega_{\bar W}(S_{\bar W})}(T_{\bar W}(-\log S_{\bar W})) \hspace{1.3cm} \mbox{ in case i),}\\
\label{eqii}
\hspace{1.7cm}
\frac{\deg_{\omega_{\bar W}(S_{\bar W})}(\check{N}_{\bar W/\bar Z})}{\rk \check{N}_{\bar W/\bar Z}} = \frac{\deg_{\omega_{\bar W}(S_{\bar W})}(T_{\bar W}(-\log S_{\bar W}))}{m+1}\hspace{.8cm} \mbox{ in case ii),}\\
\label{eqiii}\hspace{2.2cm}
\deg_{\omega_{\bar W}(S_{\bar W})} \check{N}_{\bar W/\bar Z}= 0 \hspace{5.4cm} \mbox{ in case iii).}
\end{gather}
Then $\psi^*{\rm Im}\big(\theta:T_{\bar Z}(-\log S_{\bar Z})\to E^{1,1}_{\bar Z}\big)$ and $\psi^*F^{1,1}$ are torsionfree. The inclusions
$$
\psi^*{\rm Im}\big(\theta:T_{\bar Z}(-\log S_{\bar Z})\to E^{1,1}_{\bar Z}\big) \longrightarrow\psi^*F^{1,1} \longrightarrow (\psi^*F^{1,1})^\natural
$$ 
are isomorphisms on $W$ and $\mu_{\omega_{\bar W}(S_{\bar W})}$ equivalences.
Moreover $(\psi^* F)^\natural$ is a direct factor of
$E_{\bar W}$ and hence locally free and $\ch_1((\psi^*F)^\natural)=0$.
In particular on $W$ the sequence 
$$
0 \longrightarrow  \psi^*T_{Z}\longrightarrow \sigma^*T_{\sM}\longrightarrow \psi^* N_{{Z}/\sM} \longrightarrow 0
$$
remains exact and splits.
\end{addendum}

Recall that an inclusion $\sF \subset \sG$ is a $\mu_{\omega_{\bar W}(S_{\bar W})}$-equivalence, if
both sheaves have the same rank, and if $\mu_{\omega_{\bar W}(S_{\bar W})}(\sF)=\mu_{\omega_{\bar W}(S_{\bar W})}(\sG)$.
Since $\omega_{\bar W}(S_{\bar W})$ is nef and ample with respect to $W$ this implies that $\sF \to \sG$
is an isomorphism over $W$.

The statement of Addendum \ref{3.1} says that infinitesimally in a neighborhood of $W$ the subscheme
$Z$ of $\sM$ looks like a Shimura subvariety. In the next section we will show that such an information
for sufficiently many divisors forces $Z$ to be a Shimura variety.
\begin{proof}[Proof of Theorem \ref{1.6} and of the Addendum \ref{3.1}] \ \\[.1cm]
For simplicity from now on slopes and degrees will always be with respect to $\ch_1(\omega_{\bar W}(S_{\bar W}))$, so we drop the lower index and write $\deg(\sF)$ and $\mu(\sF)$ instead of $\deg_{\omega_{\bar W}(S_{\bar W})}(\sF)$ and $\mu_{\omega_{\bar W}(S_{\bar W})}(\sF)$. 

We will also write $(E,\theta)$ instead of $(E_{\bar W},\theta_{\bar W})$ for the Higgs field induced by the  Deligne extension of the variation of Hodge structures to $\bar W$. In order to prove
Theorem \ref{1.6} and Addendum \ref{3.1} we will consider each of the cases i), ii) and iii) separately.\\[.2cm]
Let us start with case i).
Since $W$ is a subvariety of Hodge type and since the Griffiths-Yukawa coupling is non-zero,
there is a decomposition of the form 
$$(E^{2,0}\oplus E^{1,1}\oplus E^{0,2},\theta)|_W=(E^{2,0}\oplus E^{1,1}_\diamond\oplus E^{0,2},\theta)|_W\oplus (U^{1,1},0)|_W,$$ 
such that the first component uniformizes $W$ and such that the second component is the Higgs field of a unitary variation of Hodge structures of bidegree $(1,1)$. The compatibility of the Deligne extension with pullbacks implies that this decomposition extends to $\bar W$. 
 
Since $E^{1,1}_\diamond\simeq T_{\bar W}(-\log S_{\bar W} )\otimes E^{2,0}$ is $\omega_{\bar W}(S_{\bar W})$-polystable of slope zero and since $U^{1,1}$ corresponds to a unitary local system over $\bar W$, we see that
$$\sigma^*T_{\bsM}(-\log S_\bsM)\simeq E^{1,1}\otimes E^{0,2}=(E^{1,1}_\diamond\oplus U^{1,1})\otimes E^{0,2}\simeq T_{\bar W}(-\log S_{\bar W})\oplus U^{1,1}\otimes E^{0,2}$$ is
$\omega_{\bar W}(S_{\bar W})$-polystable and that $T_{\bar W}(-\log S_{\bar W})$ is a direct factor of
$\sigma^*T_{\bsM}(-\log S_\bsM)$, hence of $\psi^*T_{\bar Z}(-\log S_{\bar Z})$.

By the exact sequence \eqref{taut2} the projection from $\sigma^*T_{\bsM}(-\log S_\bsM)$
to $U^{1,1}\otimes E^{0,2}$ induces an injection 
$$
\check{N}^\natural_{\bar W/\bar Z} \longrightarrow U^{1,1}\otimes E^{0,2}.
$$
So 
$$
\mu(\check{N}^\natural_{\bar W/\bar Z})\leq \mu(U^{1,1}\otimes E^{0,2})=
\mu(T_{\bar W}(-\log S_{\bar W})),
$$
as stated in Part i). 

By Lemma \ref{torsionfree} the sheaf $F^{1,1}$ is without torsion. Since
$$
\mu(\check{N}_{\bar W/\bar Z}) \leq 
\mu(\check{N}^\natural_{\bar W/\bar Z}) \leq 
\mu(U^{1,1}\otimes E^{0,2}),
$$
the equality \eqref{eqi} shows that $\check{N}_{\bar W/\bar Z}$ and $\check{N}^\natural_{\bar W/\bar Z}$ are $\mu$-equivalent. Moreover, as explained in \cite[Proposition 2.4]{vz05}
the Simpson correspondence \cite{sim92} implies that $\check{N}^\natural_{\bar W/\bar Z}\otimes E^{2,0}$ is a direct factor of $U^{1,1}$. 

The saturated hull $(\psi^*F^{1,1})^\natural$ is nothing but $E^{1,1}_\diamond \oplus \check{N}^\natural_{\bar W/\bar Z}\otimes E^{2,0}$, hence of slope zero. Since the Griffiths-Yukawa coupling is non-zero one has $(\psi^*F^{0,2})= E^{0,2}$, and since $(\psi^*F^{2,0})=E^{2,0}$ one finds that $(\psi^*F)^\natural$ is a direct factor of $E$. 

Obviously $\psi^*{\rm Im}\big(\theta:T_{\bar Z}(-\log S_{\bar Z})\to E^{1,1}_{\bar Z}\big)$,  
$\psi^*F^{1,1}$ and $(\psi^*F^{1,1})^\natural$ are $\mu$-equivalent, so we verified Addendum \ref{3.1} in case i), except of the strict inequality on the right hand side.\\[.2cm]
Before finishing i), let us consider the case where the Griffiths-Yukawa coupling on $W$ is zero. This holds in iii) by assumption, and in ii) since the Griffiths-Yukawa coupling on $Z$ is zero. 
Then one obtains on $\bar W$ a different type of decomposition,
$$ (E^{2,0}\oplus E^{1,1}\oplus E^{0,2},\theta)=(E^{2,0}\oplus E^{1,1}_\diamond,\theta)\oplus (U^{1,1},0) \oplus (E^{1,1\vee}_\diamond\oplus E^{0,2},\theta).$$ 
Here $(E^{2,0}\oplus E^{1,1}_\diamond,\theta)$ uniformizes $W$ as a ball quotient, $(U^{1,1},0)$ is the Higgs bundle of a unitary variation of Hodge structures of type $(1,1)$, and $ (E^{1,1\vee}_\diamond\oplus E^{0,2},\theta)$ is the dual of $(E^{2,0}\oplus E^{1,1}_\diamond,\theta)$.  

Note that the uniformization gives $T_{\bar W}(-\log S_{\bar W})\simeq E^{1,1}_\diamond\otimes E^{0,2}$.
Hence, one has
$$\sigma^*T_{\bsM}(-\log S_\bsM)\simeq E^{1,1}\otimes E^{0,2}=(E^{1,1}_\diamond\oplus U^{1,1}\oplus E_\diamond^{1,1\vee})\otimes E^{0,2}$$
$$\simeq T_{\bar W}(-\log S_{\bar W})\oplus U^{1,1}\otimes E^{0,2}\oplus E^{1,1\vee}_\diamond\otimes E^{0,2}$$
and contrary to the case i) $\sigma^*T_{\bsM}(-\log S_\bsM)$ is not polystable. Nevertheless the sheaf $T_{\bar W}(-\log S_{\bar W})$ is a direct factor of $\sigma^*T_{\bsM}(-\log S_\bsM)$ and of
$\psi^*T_{\bar Z}(-\log S_{\bar Z})$.
 
Dividing by $E^{1,1}_\diamond\otimes E^{0,2}$
the exact sequence \ref{taut2} defines an embedding
\begin{equation}\label{ii+iii}
\check{N}^\natural_{\bar W/\bar Z}\hookrightarrow  U^{1,1}\otimes E^{0,2}\oplus E^{1,1\vee}_\diamond\otimes E^{0,2}
\end{equation}
and
$(\psi^*F^{1,1})^\natural=E^{1,1}_\diamond \oplus \check{N}^\natural_{\bar W/\bar Z}\otimes E^{2,0}$.\\[.2cm]
In ii) we assumed that the Griffiths-Yukawa coupling vanishes on $Z$. 
So the image of $\check{N}^\natural_{\bar W/\bar Z}$ must lie in the kernel of 
$$
\theta\otimes {\rm id}_{E^{0,2}} : E^{1,1} \otimes E^{0,2} \longrightarrow {E^{0,2}}^{\otimes 2} \otimes \Omega^1_{\bar Z}(\log S_{\bar Z}),
$$ 
hence in $U^{1,1}\otimes E^{0,2}$. Since the sheaf $U^{1,1}\otimes E^{0,2}$ is $\omega_{\bar W}(S_{\bar W})$-polystable of slope $\deg(E^{0,2})$, we obtain
\begin{equation}\label{eqiib}
\mu(\check{N}_{\bar W/\bar Z})\leq \mu(U^{1,1}\otimes E^{0,2})=\deg(E^{0,2}).
\end{equation}
Applying Theorem \ref{HHP}, ii), to the pair $W\to \sM$ one obtains 
$$
\ch_1(T_{\bar W}(-\log S_{\bar W}))=(\dim(W)+1)\cdot \ch_1(E^{0,2}),
$$
and therefore
$$
\mu(\check{N}^\natural_{\bar W/\bar Z}) \leq\frac{\deg(T_{\bar W}(-\log S_{\bar W}))}{\dim (W)+1},
$$
as stated in Part ii).

The equality \eqref{eqiib} implies that this is an equality and that
$\check{N}_{\bar W/\bar Z}\to \check{N}^\natural_{\bar W/\bar Z}$ is a $\mu$-equivalence.
Moreover one finds 
\begin{multline*}
\deg((\psi^*F)^\natural) = \deg(E^{2,0} \oplus E^{1,1}_\diamond \oplus \check{N}^\natural_{\bar W/\bar Z}\otimes E^{2,0})=\\ (d+1)\cdot \deg(E^{2,0})+
\deg(T_{\bar W}(-\log S_{\bar W}))+ \deg(\check{N}^\natural_{\bar W/\bar Z})=\\
(d+1 - (m+1) - (d-m))\cdot \deg(E^{2,0})=0. 
\end{multline*}
By \cite[Proposition 2.4]{vz05} the Simpson correspondence implies that $(\psi^*F)^\natural \subset E$ 
is a direct factor. \\[.2cm]
For Theorem \ref{1.6}, i) and ii), it remains to verify the strict inequality on the right hand side.
In both cases the sheaf $\check{N}^\natural_{\bar W/\bar Z}$ is isomorphic to a subsheaf
of the semistable sheaf $U^{1,1}\otimes E^{0,2}$, hence $\mu(\check{N}^\natural_{\bar W/\bar Z}) \leq \mu(E^{0,2}) < 0$.\\[.2cm] 
In the remaining case iii) we have again the embedding in \eqref{ii+iii}. Since $W$ is a curve, using the notation introduced there, one has 
$$
E^{1,1\vee}_\diamond\otimes E^{0,2}=(E^{2,0}\otimes T_{\bar W}(-\log S_{\bar W}))^{\vee}\otimes E^{0,2}={\mathcal O}_{\bar W}
$$
and $\check{N}^\natural_{\bar W/\bar Z}$ is a subsheaf of $U^{1,1}\otimes T_{\bar W}(-\log S_{\bar W})^{1/2}\oplus {\mathcal O}_{\bar{W}}$. This inclusion implies that $\deg(\check{N}^\natural_{\bar W/\bar Z}) \leq 0$, as stated in Part iii).

If the equation \eqref{eqiii} holds one has $\check{N}_{\bar W/\bar Z}=\check{N}^\natural_{\bar W/\bar Z}$ and both are of degree zero. So the projection to the negative sheaf $U^{1,1}\otimes T_{\bar W}(-\log S_{\bar W})^{1/2}$ must be zero, hence $\check{N}_{\bar W/\bar Z} = {\mathcal O}_{\bar W}$. Then  
$$
\psi^*F^{1,1} = T_{\bar W}(-\log S_{\bar W})\otimes E^{2,0}\oplus E^{2,0}, 
$$ 
and since $F^{2,0}=E^{2,0}$ and $F^{0,2} = E^{0,2}$ one obtains
$\deg((\psi^*F)^\natural) = 0$. 
Again the Simpson correspondence implies that $(\psi^*F,\theta) \subset (E,\theta)$ 
is a direct factor.
\end{proof}

\section{A characterization of subvarieties of a Shimura variety $\sM$ of type $\SO(n,2)$.}\label{char}
 
In this section we start with an auxiliary result on a finite set of divisors $\{Y_i\}_{i\in I}$ on projective manifolds $X$. Later $X$ will be the compactification of a subscheme of $\sM$ and the $Y_i$ will be compactifications of Shimura subvarieties $W_i$ of $\sM$. 
 
\begin{lemma}\label{2.1}   Let $X$ be a smooth projective manifold of dimension $d$ and let $\{Y_i\}_{i\in I}$ be a set of pairwise distinct prime divisors. Let $\rho$ be the Picard number of $X$, let $A$ be a nef and big divisor on $X$ and assume that $Y_i^2.A^{d-2}<0$ for all $i\in I$.

If $\# I \geq \rho^2 + \rho + 1$ then there exists a linear combination $D=\sum_{i\in I} a_iY_i$ with 
$a_i \in \N$ and $D^2.A^{d-2} > 0$.
\end{lemma}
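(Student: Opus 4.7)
The plan is to set up the Gram matrix of the $Y_i$ against the quadratic form $q(v):=v\cdot v\cdot A^{d-2}$ on $\mathrm{NS}(X)_\mathbb{R}$, reduce the conclusion to a positivity question about that matrix, and derive a contradiction from the Hodge-theoretic signature if that positivity fails.

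By the Hodge index theorem applied to the nef and big class $A$, $q$ is non-degenerate of signature $(1,\rho-1)$, with associated bilinear form $b(\cdot,\cdot)$. Two preliminary observations: since the $Y_i$ are distinct prime divisors, $b(Y_i,Y_j)\geq 0$ for $i\neq j$ (the intersection cycle is effective and $A^{d-2}$ is nef); and the classes $[Y_i]$ are pairwise distinct in $\mathrm{NS}(X)_\mathbb{R}$, because $[Y_i]=[Y_j]$ with $i\neq j$ would force $b(Y_i,Y_j)=q(Y_i)<0$. Let $G=(b(Y_i,Y_j))_{i,j}$, a symmetric matrix with negative diagonal and nonnegative off-diagonal entries. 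Finding the required $D$ is equivalent (by rescaling and density of rationals, followed by a perturbation that makes all coordinates strictly positive) to finding $a\in\mathbb{R}_{\geq 0}^{|I|}$ with $a^T G a>0$. Assume no such $a$ exists, i.e., $-G$ is copositive. I would then invoke the short fact that a symmetric matrix with nonpositive off-diagonal entries that is copositive is automatically positive semidefinite: for any $c\in\mathbb{R}^{|I|}$ write $c=c^+-c^-$ with disjoint supports, expand $c^T(-G)c=(c^+)^T(-G)c^++(c^-)^T(-G)c^--2(c^+)^T(-G)c^-$, and note that the first two terms are $\geq 0$ by copositivity while the cross term involves only indices $i\neq j$, where $(-G)_{ij}\leq 0$. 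This forces $G$ to be negative semidefinite.

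Hence $q$ is negative semidefinite on $V':=\mathrm{span}\{[Y_i]\}\subseteq\mathrm{NS}(X)_\mathbb{R}$, so $\dim V'\leq\rho-1$ by Hodge index. Passing to the quotient $V'':=V'/\ker(q|_{V'})$, equipped with the positive definite inner product $\langle\cdot,\cdot\rangle:=-b(\cdot,\cdot)$ of dimension $\leq\rho-1$, the images $\bar Y_i$ are nonzero (as $q(Y_i)<0$), satisfy $\langle \bar Y_i,\bar Y_j\rangle\leq 0$ for $i\neq j$, and no two can be equal or positively proportional (either would produce $\langle\bar Y_i,\bar Y_j\rangle>0$, contradicting $b(Y_i,Y_j)\geq 0$). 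A classical fact bounds such configurations: in a Euclidean space of dimension $n$, any collection of nonzero vectors with pairwise nonpositive inner products and no two positively proportional has at most $2n$ elements (proved by induction on $n$ via orthogonal projection to a hyperplane). Therefore $|I|\leq 2(\rho-1)$, which contradicts the hypothesis since $\rho^2+\rho+1>2(\rho-1)$ for every $\rho\geq 1$.

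The contradiction yields some $a\in\mathbb{R}_{\geq 0}^{|I|}$ with $a^T G a>0$. To promote this to strictly positive integer coefficients, decompose $G$ along the connected components of its off-diagonal support graph, apply Perron-Frobenius on a block that carries the positive eigenvalue to extract a strictly positive eigenvector there, extend by zero, perturb slightly to make every coordinate strictly positive while preserving $a^T G a>0$, and finally approximate by rationals and clear denominators. The step I expect to be the main obstacle is the copositivity-to-PSD implication, which is where the sign condition $b(Y_i,Y_j)\geq 0$ really bites; once that step is in place, the Hodge-theoretic signature and the Euclidean geometry lemma close the argument cleanly.
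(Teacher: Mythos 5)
Your argument is correct, but it follows a genuinely different route from the paper. The paper's proof is a pigeonhole argument inside ${\rm NS}(X)_\Q$: any $\rho+1$ of the classes $[Y_i]$ are linearly dependent, and a dependence relation can be rewritten as $D_\iota\equiv D'_\iota$ with $D_\iota,D'_\iota$ effective and without common components, so $D_\iota^2.A^{d-2}\geq 0$; either this (possibly after adding a transversal $Y_j$) produces the desired $D$, or one extracts $\rho+1$ classes that are pairwise $A^{d-2}$-orthogonal with negative self-intersection, hence linearly independent, contradicting the Picard number --- this is what forces the quadratic threshold $\rho^2+\rho+1$. You instead work with the Gram matrix $G$ of $q(v)=v^2.A^{d-2}$: the failure of the conclusion means $-G$ is copositive, and your (correct) observation that a copositive symmetric matrix with nonpositive off-diagonal entries is positive semidefinite turns the sign condition $Y_i.Y_j.A^{d-2}\geq 0$ into negative semidefiniteness of $q$ on the span of the $[Y_i]$; quotienting by the radical and invoking the classical bound of $2n$ nonzero pairwise-obtuse vectors in an $n$-dimensional Euclidean space gives $\#I\leq 2(\rho-1)$, a contradiction. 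Notably your route proves a sharper statement (already $\#I\geq 2\rho-1$ suffices, linear rather than quadratic in $\rho$), at the price of two external inputs (the copositivity lemma and the obtuse-vector bound) where the paper uses only elementary intersection theory. Two small blemishes, neither fatal: the Hodge index theorem in the form ``non-degenerate of signature $(1,\rho-1)$'' can fail for $A$ merely nef and big (the form $v.w.A^{d-2}$ may be degenerate), but all you actually use is that $q(A)=A^d>0$, so the negative index is at most $\rho-1$ (even the trivial bound $\rho$ would do, since $\rho^2+\rho+1>2\rho$); and the Perron--Frobenius detour at the end is unnecessary --- once some $a\in\mathbb{R}^{\#I}_{\geq 0}$ with $a^TGa>0$ exists, adding a small $\epsilon>0$ to every coordinate and approximating by rationals already yields strictly positive integer coefficients by continuity.
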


\begin{proof}  Let ${\rm NS}(X)_\Q$ be the $\Q$-Neron-Severi group of $X$ and let $\equiv$ stand for ``numerical equivalence''.
Remark first that for effective divisors $D$ and $D'$ without common components, the intersection
$D.D'$ is a linear combination of codimension two subschemes with non-negative coefficients. Since $A$ is nef, one 
obtains $D.D'.A^{d-2} \geq 0$.

We start with any subset $I_1\subset I$ of cardinality $\rho+1$, say $I_1=\{1,\ldots,\rho+1\}$.
The images of the divisors $Y_1,\ldots, Y_{\rho+1}$ in ${\rm NS}(X)_\Q$ must be linearly dependent, hence there exist $a_1,\ldots, a_l, b_1,\ldots, b_m \in {\N}$ with
$$
D_1\equiv D_1' \mbox{ \ \ for \ \ } D_1=\sum _{i=1}^la_i Y_i \mbox{ \ \ and \ \ } D'_1=\sum_{j=1}^m b_j Y_j.
$$
Since $D_1$ and $D_1'$ are effective divisors without common components
one obtains $D_1^2.A^{d-2}=D_1'D_1.A^{d-2} \geq 0$. If $D_1^2.A^{d-2}>0$, we are done. 

If $D_1^2.A^{d-2}=0$ and if there exists a divisor $Y_j$ with $j> \rho+1$ and with $D.Y_j.A^{d-2} > 0$,
then for $m$ sufficiently large $(mD_1+Y_j)^2.A^{d-2}>0$, and again we found the divisor we are looking for.

Hence if the statement of Lemma \ref{2.1} is wrong, for any system of disjoint subsets
$I_1,\ldots,I_{\rho}\subset I$ with $\# I_\iota = \rho+1$, we can find effective non-zero divisors
$$
D_\iota=\sum _{i\in I_\iota} a_i\cdot Y_i 
$$ 
with $D_\iota^2.A^{d-2}=0$ and with $Y_j.D_\iota.A^{d-2}=0$ for all $j\in I\setminus I_\iota$.
In particular, choosing $\nu_\iota \in I_\iota$ with $a_{\nu_\iota}\neq 0$ the intersection $Y_{\nu_\iota}.Y_j.A^{d-2}=0$ for $j\in I\setminus I_\iota$. 

By assumption $I\setminus (I_1 \cup \cdots \cup I_\rho)$ still contains one element, say $\nu_{\rho+1}$, and
the intersection of $A^{d-2}$ with two different divisors in $\{ Y_{\nu_1}, \ldots , Y_{\nu_{\rho+1}}\}$ is zero. 
So given a linear combination
$$
0=\sum_{\iota=1}^{\rho+1} \alpha_{\iota}\cdot Y_{\nu_\iota} \mbox{ \ \ one finds \ \ }
0= \sum_{\iota=1}^{\rho+1}\alpha_{\iota}\cdot Y_{\nu_\iota}.Y_{\nu_k}.A^{d-2} = \alpha_{k}\cdot Y_{\nu_k}.Y_{\nu_k}.A^{d-2},
$$
for all $1 \le k \le \rho+1$. The assumption $Y_i^2.A^{d-2}<0$ implies 
$$\alpha_{1}= \cdots = \alpha_{\rho+1}=0,$$
and hence the images of $\{ Y_{\nu_1}, \ldots , Y_{\nu_{\rho+1}}\}$ in ${\rm NS}(X)_\Q$ are linear independent, a contradiction.
\end{proof}

From now on, as indicated in the title of this section, $\sM$ will again be a Shimura variety of type $\SO(n,2)$. 
\begin{assumptions}\label{assSect4}
Consider a projective manifold $\bar Z$ of dimension $d\geq 2$ and the complement $Z$ of a strict normal crossing divisor $S_{\bar Z}$ with $\Omega^1_{\bar Z}(\log S_{\bar Z})$ nef and with $\omega_{\bar Z}(S_{\bar Z})$ ample with respect to $Z$. Given an injection $\varphi: Z \to \sM$ and a finite index
set $I$, consider for $i\in I$ non-singular irreducible divisors $W_i$ on $Z$, hence $m=\dim(W_i)=d-1$.

The corresponding embeddings will be denoted by 
$$
\begin{CD}
W_i @> \psi_i >>  Z @> \varphi >> \sM \mbox{ \ \ and \ \ } \sigma_i=\varphi\circ \psi_i.
\end{CD}
$$
We assume that $\sM$ is a Shimura varieties of type $\SO(n,2)$, that the $W_i$ are deformations of Shimura subvarieties of Hodge type, and that $\sigma_i$ is induced by a morphism of groups. 
Choosing Mumford compactifications $\bsM=\sM \cup S_\bsM$ and $\bar W_i=W_i \cup S_{{\bar W}_i}$, we write again 
$$
\begin{CD}
{\bar W}_i @> \psi_i >>  {\bar Z} @> \varphi >> \bsM\mbox{ \ \ and \ \ } \sigma_i=\varphi\circ \psi_i
\end{CD}
$$
for the induced rational maps. Let $\V$ be the uniformizing variation of Hodge structures on $\sM$, let $\U$ be the largest unitary subvariation of Hodge structures and $\V=\W\oplus \U$. As usual $(E_{\bar Z},\theta_{\bar Z})$ will denote the Higgs bundle induced by the Deligne extension of $\varphi^*\W$.
\end{assumptions}
\begin{notations}
Let $\rho$ denote the Picard number of $\bar Z$ and let $\delta$ denote the number of non-empty intersections
$S_\ell\cap S_k$ of different components $S_\ell$ and $S_k$ of $S_{\bar Z}$.
We define $\varsigma(\bar{Z})=\rho^2 + \rho + 1$ if one of the following conditions holds:
\begin{enumerate}\label{3.2n}
\item $d=\dim(Z)\geq 4$.
\item For all $i\in I$ the divisor $S_{\bar Z}|_{\psi_i({\bar W}_i)} - (S_{\bar Z}|_{\psi_i({\bar W}_i)})_{\rm red}$, considered in \eqref{tautchern}, is zero.
\item For all $i,j\in I$ one has $\psi_i(\bar{W}_i) \cap \psi_j(\bar{W}_j)\neq \emptyset$.
\end{enumerate}
Otherwise we choose $\varsigma(\bar{Z})=(\rho+\delta)^2 + \rho + \delta + 1$.
\end{notations}
\begin{theorem}\label{3.2}  Under the assumptions made in \ref{assSect4} one has:
\begin{enumerate}
\item[i)] If the $W_i$ are of type  $\SO(d-1,2)$, for all $i\in I$, if they satisfy the HHP equality  
$$
\mu_{\omega_{{\bar W}_i}(S_{{\bar W}_i})}(\check{N}_{{\bar W}_i/{\bar Z}})=
\mu_{ \omega_{{\bar W}_i}(S_{{\bar W}_i})}(T_{{\bar W}_i}(-\log S_{{\bar W}_i})),
$$
and if $\# I \geq \varsigma(\bar{Z})$, then $Z\subset \sM$ is a Shimura subvariety of Hodge type for $\SO(d,2)$.
\item[ii)] Assume that the Griffiths-Yukawa coupling vanishes on ${\bar Z}$. If the $W_i$ are Shimura varieties of type  $\SU(d-1,1)$, if
$$
\frac{\deg_ {\omega_{{\bar W}_i}(S_{{\bar W}_i})}(\check{N}_{{\bar W}_i/{\bar Z}})}{\rk \check{N}_{{\bar W}_i/{\bar Z}}}= \frac{\deg_{\omega_{{\bar W}_i}(S_{{\bar W}_i})}(T_{{\bar W}_i}(-\log S_{{\bar W}_i}))}{d+1},
$$
and if $\# I \geq \varsigma(\bar{Z})$, then $Z\subset \sM$ is a Shimura subvariety of Hodge type for $\SU(d,1)$.
\item[iii)]  Assume that ${\bar Z}$ is a surface and that $I=\{1, \ 2\}$.
Assume that 
$$
\sigma_1(\bar{W}_1)\cap \sigma_2(\bar{W}_2)\neq \emptyset
$$ 
and that $\deg \check{N}_{\bar{W}_i/{\bar Z}}=0$.
Then $Z$ is the product of two Shimura curves of Hodge type.
\end{enumerate}
\end{theorem}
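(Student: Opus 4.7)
The strategy for parts (i) and (ii) is to show that the saturated Higgs subsheaf $(F,\theta) \subset (E_{\bar Z},\theta_{\bar Z})$ generated by $E^{2,0}_{\bar Z}$ and defined in \eqref{FHiggs1}--\eqref{FHiggs2} is a direct factor of $E_{\bar Z}$; by Lemma~\ref{decomposition} this will force $Z \subset \sM$ to be a Shimura subvariety of the required Hodge type. By polystability of $E_{\bar Z}$ (Simpson, in the form of Proposition~2.4 of \cite{vz05}), this reduces to proving $\deg_{\omega_{\bar Z}(S_{\bar Z})} F = 0$, since the upper bound $\leq 0$ is automatic from semistability.

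For each $i \in I$, the HHP-equality hypothesis combined with Addendum~\ref{3.1} gives that $(\psi_i^* F)^\natural$ is a direct factor of $E_{\bar W_i}$ with vanishing first Chern class, and that $\psi_i^* F \hookrightarrow (\psi_i^* F)^\natural$ is a $\mu$-equivalence. Unraveled into intersection numbers on $\bar Z$, this yields $\ch_1(F) \cdot [W_i] \cdot A^{d-2} = 0$ for each $i$, where $A := \omega_{\bar Z}(S_{\bar Z})$; boundary terms coming from non-reduced intersections $S_{\bar Z}|_{\bar W_i}$ are absorbed by the enlarged threshold $\varsigma(\bar Z)$ via a blow-up along the double points of $S_{\bar Z}$. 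At the same time, Theorem~\ref{1.6} applied to $W_i \subset Z \hookrightarrow \sM$ gives $\mu(\check{N}^\natural_{\bar W_i/\bar Z}) < 0$, whence $W_i^2 \cdot A^{d-2} < 0$ for every $i$.

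This places us in the hypotheses of Lemma~\ref{2.1}: since $\#I \geq \varsigma(\bar Z)$, we obtain a positive integral combination $D = \sum_i a_i W_i$ with $D^2 \cdot A^{d-2} > 0$, and consequently $\ch_1(F) \cdot D \cdot A^{d-2} = 0$. To close the gap between these restriction equalities and the global vanishing $\deg_A F = 0$, I would work in $NS(\bar Z)_{\mathbb{R}}$ equipped with the pairing $\langle \alpha, \beta \rangle = \alpha \cdot \beta \cdot A^{d-2}$, a quadratic form of signature $(1, \rho - 1)$ in which both $A$ and $D$ lie in the positive cone; combining $\langle \ch_1(F), D \rangle = 0$ with the scheme-theoretic splittings of Addendum~\ref{3.1} (not just their numerical shadows) and the semistability bound $\deg_A F \leq 0$ through a Hodge-index argument forces $\deg_A F = 0$. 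Simpson's polystability then upgrades $F$ to a Higgs direct summand, and Lemma~\ref{decomposition} identifies the Shimura type of $Z$. For (iii), working on the surface $\bar Z$, Theorem~\ref{1.6}(iii) combined with $\deg \check{N}_{\bar W_i / \bar Z} = 0$ forces the Griffiths-Yukawa coupling on $\bar Z$ to be nonzero while vanishing along each $W_i$, and Addendum~\ref{3.1} produces direct-factor Higgs subbundles $(\psi_i^* F)^\natural \subset E_{\bar W_i}$ for $i = 1, 2$; at any point of $\sigma_1(\bar W_1) \cap \sigma_2(\bar W_2) \neq \emptyset$ the tangent directions of $W_1$ and $W_2$ span $T_Z$, and the two infinitesimal splittings assemble by Simpson rigidity into a global decomposition $(E_{\bar Z}, \theta_{\bar Z}) \cong \pi_1^* (E_{C_1}, \theta_{C_1}) \oplus \pi_2^* (E_{C_2}, \theta_{C_2})$ coming from two projections, identifying $Z \cong C_1 \times C_2$ via Lemma~\ref{decomposition}.

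The main obstacle is the globalization step: passing from the collection of infinitesimal scheme-theoretic splittings of Addendum~\ref{3.1} along the divisors $W_i$ to a genuine Higgs-direct-summand decomposition on $\bar Z$. The numerical input from Lemma~\ref{2.1} by itself is not quite sufficient -- one must also exploit that $\psi_i^* F$ coincides sheaf-theoretically (and not merely numerically via slope equality) with its saturated hull on $W_i$, so that the Hodge-index argument can be coupled with Simpson polystability to promote a degree-zero subsheaf to a direct factor. The possibility that the $W_i$ meet the singular locus of $S_{\bar Z}$ is what forces the enlarged threshold $(\rho + \delta)^2 + \rho + \delta + 1$ in the non-generic case, handled by passing to a suitable blow-up of $\bar Z$ where the pulled-back boundary becomes reduced along each $W_i$.
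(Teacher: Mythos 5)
Your skeleton for i) and ii) (Addendum \ref{3.1} gives numerical triviality of $F$ along each $W_i$, Lemma \ref{2.1} produces a combination $D$ with positive self-intersection, Hodge index plus Simpson polystability, then Lemma \ref{decomposition}) is the paper's skeleton, but the step you yourself flag as ``the main obstacle'' is genuinely missing, and it is not closed by ``scheme-theoretic splittings'': from $\ch_1(F).D.(\mbox{pol})^{d-2}=0$ and $D^2.(\mbox{pol})^{d-2}>0$ the Hodge index theorem only gives $\ch_1(F)^2.(\mbox{pol})^{d-2}\leq 0$, and together with semistability ($\deg F\leq 0$) this does \emph{not} force $\deg F=0$, since the orthogonality is against $D$, not against the polarization. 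The missing ingredient is curvature-theoretic: $\det F$ carries the Hodge metric of a Higgs subsheaf of a VHS, so $\ch_1(F)$ is represented by a negative semi-definite form; hence on the surface $X$ cut out by general sections one has $(\det F|_X)^2\geq 0$, and Hodge index then forces $\ch_1(F|_X)\equiv 0$, whereupon semi-definiteness propagates this to $\ch_1(F)=0$ on all of $\bar Z$ --- which is what Simpson's polystability needs to make $(F,\theta)$ a direct factor. Two further technical points in your setup do not work as stated: you take intersection numbers against $A=\omega_{\bar Z}(S_{\bar Z})$, whereas the argument needs the semiample sheaf $\sL=\varphi^*\omega_{\bsM}(S_{\bsM})^\gamma$ pulled back from the Baily--Borel compactification, both to cut an honest smooth surface $X$ (global generation, not available for the merely nef and big $A$) and to kill boundary components mapping to points of $\bsM^*$; and your claim that $W_i^2.A^{d-2}<0$ for every $i$ does not follow from Theorem \ref{1.6}, because \eqref{tautchern2} carries the correction term $-(S_{\bar Z}|_{W_i}-(S_{\bar Z}|_{W_i})_{\rm red})$, whose sign allows $W_i^2$ against the polarization to be nonnegative even when $\deg\check{N}_{\bar W_i/\bar Z}<0$. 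This is exactly why the paper proves the self-intersection statement only for $d\geq 4$ (Claim \ref{large}), and otherwise runs the case analysis of Claims \ref{cases} and \ref{modification} together with Corollary \ref{good2} (components of the non-reduced boundary part either lie in double intersections $S_\ell\cap S_k$ or have zero $\sL$-degree), blowing up the double intersections --- your ``absorbed by the enlarged threshold'' gestures at this but supplies no argument.

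For iii) your proposal diverges from the paper and does not constitute a proof: asserting that the two splittings along $W_1$ and $W_2$ ``assemble by Simpson rigidity'' into a decomposition pulled back from two projections presupposes the product structure you are trying to establish, and the span of tangent directions at an intersection point plays no role in any available rigidity statement. The paper argues numerically again: Addendum \ref{3.1} iii) gives $\ch_1(F).(\psi_1(\bar W_1)+\psi_2(\bar W_2))=0$, while $\deg\check{N}_{\bar W_i/\bar Z}=0$ gives $(\psi_i(\bar W_i))^2=0$ and the nonempty intersection gives $(\psi_1(\bar W_1)+\psi_2(\bar W_2))^2>0$; Hodge index plus the semi-negativity of $\ch_1(F)$ yield $\ch_1(F)=0$, Simpson makes $(F,\theta)$ a direct factor, so $Z$ is a Shimura surface of Hodge type, and by Theorem \ref{1.6} iii) its Griffiths--Yukawa coupling does not vanish; since $Z$ contains Shimura curves with vanishing Griffiths--Yukawa coupling it cannot be a genuine Hilbert modular surface, leaving the product of two Shimura curves. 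You would need to replace your assembly argument by this (or an equivalent) chain.
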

Let us start with some preparations for the proof. First of all, 
by Lemma \ref{decomposition} for any coherent sheaf $\sF$ one has
$$
\deg_{E^{2,0}}(\sF)= (d-1)^{d-2} \cdot \deg_{\omega_{\bar W}(S_{\bar W})}(\sF)
\mbox{ \ \ or \ \ }
\mu_{E^{2,0}}(\sF)= d^{d-2} \cdot \mu_{\omega_{\bar W}(S_{\bar W})}(\sF),
$$
depending on the type of $W_i$. In both cases we
are allowed to replace the slope with respect to $\ch_1(\omega_{{\bar W}_i}(S_{{\bar W}_i}))$ by the one with respect to $\ch_1(E^{2,0})$ or $\ch_1(\varphi^*\omega_{\bsM}(S_{\bsM}))$. 

The Mumford compactification $\bsM$ maps to the Baily-Borel compactification $\bsM^*$, and for
$\gamma$ sufficiently large the sheaf $\varphi^*\omega_{\bsM}(S_{\bsM})^\gamma$ is the pullback of a very ample sheaf $\omega_{\bsM^*}$ on $\bsM^*$ (see \cite{m77}). Then the invertible sheaf 
$$
\sL=\det(E^{1,1}_{\bar Z}\otimes E^{2,0}_{\bar Z})^{-\gamma}=\varphi^*(\omega_\bsM(S_\bsM))^\gamma
$$
is semiample. In fact, if $\phi:\hat{Z} \to \bar{Z}$ is a morphism such that
$\bar{Z} \to \bsM^*$ extends to a morphism $\hat{\varphi}:\hat{Z}\to \bsM^*$ the unicity of the Deligne extension implies that
$$
\phi^*\varphi^*\sL=\phi^*\det(E^{1,1}_{\bar Z}\otimes E^{2,0}_{\bar Z})^{-\gamma}=\hat{\varphi}^*\omega_{\bsM^*}^\gamma,
$$  
and hence that $\sL$ is generated by global sections. In the same way, one sees that
$\psi_i^*\sL$ is an invertible sheaf on $\bar{W}_i$ which is generated by global sections.\vspace{.2cm}

The dimension of $\bsM^*\setminus \sM$ is at most one (see \cite{Lo03}, for example). Hence given any component $\Delta_{\bar{Z}}$ of the boundary $S_{\bar{Z}}$ (or of $S_{\bar{W}_i}$) one finds
\begin{gather}\label{boundary}
\Delta.\ch_1(\sL)^2=\ch_1(\sL|_\Delta)^2\equiv 0 \mbox{ \ \ (or \ \ }
\Delta.\ch_1(\psi_i^*\sL)^2\equiv 0\mbox{)}, \mbox{ \ \ and}\\ \notag
\Delta.\ch_1(\sL)=\ch_1(\sL|_\Delta)\equiv 0 \mbox{ \ \ (or \ \ }
\Delta.\ch_1(\psi_i^*\sL)\equiv 0\mbox{)},
\end{gather}
if the dimension of the image of $\Delta_{\bar{Z}}$ or $\Delta_{\bar{W}_i}$ in $\bsM^*$ is a point.\vspace{.1cm}

We will need blowing ups of the  Mumford compactification $\sM$ such that the proper transform $W_i$
meets the boundary transversally outside of codimension two:
\begin{proposition}\label{good}
For some $i\in I$  let $\Delta$ be a component of $S_{\bar{W}_i}$ such that the morphism
$\Delta \to \bsM^*$ is finite (hence $d\leq 3$).
Then there exists a blowing up $\Psi:\bsM_\Delta \to \bsM$, with centers in $S_{\bsM}$, such that:
\begin{enumerate}
\item $S_{\bsM_\Delta}=\bsM_\Delta\setminus \sM$ is a normal crossing divisor and
 $\Omega^1_{\bsM_\Delta}(\log S_{\bsM_\Delta})$ is nef.
\item In a neighborhood of the general point 
of $\Delta$ the rational map $\sigma_i:W_i\to \sM$ extends to an embedding
$\sigma_\Delta:\bar{W}_i\to \bsM_\Delta$ 
whose image intersects $S_{{\bsM}_\Delta}$ transversally.
\end{enumerate}
\end{proposition}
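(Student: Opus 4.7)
My plan is to resolve the indeterminacies of the rational map $\sigma_i: \bar W_i \dashrightarrow \bsM$ near $\Delta$ by a toroidal refinement of the Mumford compactification. Since $\Delta \to \bsM^*$ is finite, the image $C^* := \sigma_i(\Delta) \subset \bsM^* \setminus \sM$ is at most one-dimensional, but in either case its generic point has a neighborhood in $\bsM$ with the standard local toroidal structure of a Mumford compactification. In such a chart $S_\bsM$ is a normal crossing divisor obtained as a union of orbit closures of a torus action, and $\bsM$ looks étale-locally like $T/F$, where $T$ is a smooth toric variety and $F$ a finite group acting freely.

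Next I use Hironaka's resolution of indeterminacy, or equivalently a combinatorial refinement of the fan defining the local toroidal chart, to build $\Psi: \bsM_\Delta \to \bsM$ as a sequence of blowups with smooth centers in closed strata of $S_\bsM$. Since $\sigma_i$ is already a morphism on $W_i$, all indeterminacies lie in $S_\bsM$, so the centers can all be taken inside $S_\bsM$, and by refining further one arranges that the strict transform $\sigma_\Delta: \bar W_i \dashrightarrow \bsM_\Delta$ is a morphism on a Zariski open dense subset of $\Delta$. Moreover, by taking a refinement of the fan of $\bsM$ that is subordinate to (a refinement of) the fan of $\bar W_i$, one can ensure that the image of $\sigma_\Delta$ meets $S_{\bsM_\Delta}$ transversally along a Zariski open dense subset of $\Delta$; this uses the compatibility theory of toroidal compactifications from \cite{AMRT75} and \cite{m77}.

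Finally, I have to check that $\Omega^1_{\bsM_\Delta}(\log S_{\bsM_\Delta})$ remains nef. The key point is that $\Psi$ is a log smooth (toroidal) blowup, for which one has a canonical isomorphism
$$
\Psi^* \Omega^1_\bsM(\log S_\bsM) \xrightarrow{\ \sim\ } \Omega^1_{\bsM_\Delta}(\log S_{\bsM_\Delta}).
$$
Since $\Omega^1_\bsM(\log S_\bsM)$ is nef by Mumford's extension of the Hodge bundle, and since nefness is preserved under pullback, this yields assertion (1). The main obstacle will be the transversality half of assertion (2): ensuring that a single refinement $\Psi$ can simultaneously resolve the indeterminacy \emph{and} make $\sigma_\Delta(\bar W_i)$ meet the full new boundary $S_{\bsM_\Delta}$ transversally at the generic point of $\Delta$. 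Both goals are classical in the toroidal setting, but combining them in a log smooth way requires careful management of the fan compatibility between the Mumford compactifications of $W_i$ and $\sM$.
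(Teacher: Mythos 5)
Your overall strategy (work in the toroidal charts of the Mumford compactification, blow up only closed strata of $S_\bsM$, and preserve nefness via $\Psi^*\Omega^1_\bsM(\log S_\bsM)\cong\Omega^1_{\bsM_\Delta}(\log S_{\bsM_\Delta})$) is the same as the paper's, and your treatment of assertion (1) is correct: the paper likewise observes that nefness survives precisely because the centers are boundary strata, along which $\bsM$ looks locally like a product. But for assertion (2) — which is the actual content of the proposition — your argument stops at the point where the work begins. You frame the problem as resolving indeterminacy of $\sigma_i$ and then ``refining the fan of $\bsM$ subordinate to the fan of $\bar W_i$'' to get transversality, and you yourself flag the combination as ``the main obstacle''; that obstacle is never overcome, so the key claim (an embedding near the general point of $\Delta$ meeting $S_{\bsM_\Delta}$ transversally) is asserted rather than proved. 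Note also a conceptual slip: indeterminacy is not the issue. Since $\bar W_i$ is smooth and $\bsM_\Delta$ projective, the indeterminacy locus of any lift has codimension $\geq 2$ and so cannot contain the divisor $\Delta$; moreover, blowing up the \emph{target} cannot remove indeterminacy in any case. The genuine problem is that the already-existing extension may meet $S_\bsM$ with high multiplicity along several branches of the boundary.

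What the paper does, and what is missing from your proposal, is the explicit local description of $\sigma_i$ at the general point of $\Delta$: one compares the rational boundary component $F'$ of $D'$ with its image $F$ in $D$, notes that $U(F')$ is one-dimensional (so the cone $C(F')$ has a single generator $\xi'$ and, after replacing $\bar W_i$ locally by the compactification induced from $\bsM$, the map is a morphism, indeed an embedding, near the general point of $\Delta$), writes $\xi'=\sum_i a_i\xi_i$ with $a_i\in\N$ and $\gcd\{a_i:a_i\neq 0\}=1$, and reads off in Mumford's coordinates that the image is locally the monomial germ $z\mapsto(z^{a_1},\ldots,z^{a_{\mu'}},1,\ldots,1)$. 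Transversality then fails exactly when $\mu'>1$, and it is restored by finitely many blowups of the corresponding boundary strata (equivalently, subdividing the cone so that the ray through $\xi'$ becomes a ray of the smooth refined fan); the condition $\gcd=1$ is what guarantees that at the end the proper transform meets a single boundary component with multiplicity one and that the extension is an embedding, not just a finite cover of its image. Your appeal to ``fan compatibility between the Mumford compactifications of $W_i$ and $\sM$'' gestures at this, but without the one-dimensionality of $C(F')$, the explicit presentation $\xi'=\sum a_i\xi_i$, and the resulting monomial normal form, there is no argument that stratum blowups suffice, nor any reason offered for the embedding statement. As it stands the proposal proves (1) but leaves (2) as a restatement of the problem.
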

\begin{proof}
Since the first condition holds on $\bsM$ it will hold on $\bsM_\Delta$ if (and only if)
we only blow up strata of the boundary divisors. In fact, this is an easy exercise if the center 
is a point. Since locally along any stratum of $S_{\bsM_\Delta}$ the manifold
$\bsM$ looks like a product one obtains the general case.

For the proof one has to compare the toroidal compactifications, constructed in \cite{AMRT75}, for two local symmetric domains. Fortunately we will only need this in smooth points of boundary components.
A by far more extensive description will be given in Section 2 of the forthcoming article \cite{And07}, and we use this as an excuse, just to sketch the arguments.

Let us fix $i$ and $\Delta$ and drop the lower indices. Recall that $\sM=\Gamma \backslash \SO(n,2) \slash K$ with $\Gamma$ a neat arithmetic group and that $W= \Gamma' \backslash G' \slash K'$ is a local symmetric domain. The inclusion $W\to \sM$ is induced by a homomorphism of groups $G'\to G:=\SO(n,2)$ with a finite kernel. To define the Mumford compactification one needs several data, which we list for $G$. Adding a ${}'$ gives the corresponding notations for $G'$.

First of all, let $D=G\slash K \to D^\vee$ be the embedding of $D$ (we drop the ${}^+$, used in the first section) in its compact dual $D^\vee$. The maximal analytic submanifolds $F$ of $D^\vee \setminus D$ are called the boundary components of $D$. One defines (see \cite[\S 3]{m77}):
\begin{itemize}
\item $N(F):=\{g\in G; \ gF=F\}$.
\item $F$ is rational if $\Gamma\cap N(F)$ is an arithmetic subgroup of $N(F)$.
\end{itemize}
Recall that the boundary of the Baily-Borel compactification $\bsM^*\setminus \sM$ is the disjoint union of finitely many subspaces of the form $ (\Gamma\cap N(F))\backslash F$ for rational boundary components $F$. Next we need
\begin{itemize}
\item $U(F)={}$the center of the unipotent radical $W(F)$ of $N(F)$, as a vector space $\approx \C^k$.
\end{itemize}
The homomorphism $\tau:G'\to G$ extends to a homomorphism $D'^\vee \to D^\vee$ (see \cite{AMRT75} and it induces a map from the set of rational boundary components of $D'$ to the one of $D$. Moreover, the inclusion
$W \subset \sM$ extends to a map $\bar{W}^* \to \bsM^*$ of the Baily-Borel compactifications, compatible with 
the map between boundary components. Fixing some boundary component $F'$ of $D'$ with image $F$,
the characterization of boundary components in \cite[III, \S 3]{AMRT75} shows that $F$ is rational if the same holds for $F'$. Moreover $\tau$ induces compatible morphisms $N(F') \to N(F)$, $W(F')\to W(F)$ and $U(F')\to U(F)$. Furthermore one needs a self-adjoint open convex cone $C(F)\subset U(F)$,
homogeneous under $G$. Again the latter is compatible with $\tau$. The toroidal compactification depends on certain compatible decompositions of the cones $\overline{C(F)}$, for all boundary components. Or, if one uses coordinates, as Mumford does in \cite[\S 3]{m77}, it is given by a certain basis $\{\xi_1,\ldots,\xi_k\}$ of the $\Z$-module $\Gamma \cap U(F)$, with $\xi_1,\ldots \xi_\mu\in C(F)$ and with $\xi_{\mu+1},\ldots,\xi_k\in \overline{C(F)}\setminus C(F)$, for some $\mu\geq 1$.
As we will recall in a moment, each point $q$ in $\bsM$, lying on $S_\bsM$ and with image in
$(\Gamma\cap N(F))\backslash F$, has an analytic neighborhood isomorphic an open subset of $\C^k\times \C^\ell \times F$ with coordinates $(z_1,\ldots,z_k)$ on the first factor. Here the intersection with $\sM$ corresponds to the intersection with ${\C^*}^k\times \C^\ell \times F$, and the different boundary components of $S_\bsM$ map to the zero sets of $z_\iota$ for
some $1\leq \iota  \leq \mu$ (see \cite[page 256, 5)]{m77}). 

The pullback of the cone decomposition defining $\bsM$ gives a cone decomposition for $F'$, hence a second toroidal compactification $\bar{W}'$. In a neighborhood of a general point of $\Delta$ we have a morphism $\bar{W} \to \bar{W}'$, and since both map to the Baily-Borel compactification, this morphism will be an embedding. So we may replace $\bar{W}$ by $\bar{W}'$. As usual we drop the upper index ${}'$, and assume that there is a morphism $\sigma:\bar{W}\to \sM$ of toroidal embeddings.

By assumption, the dimension of $\Delta$ is equal to the one of the rational boundary component $F'$, hence in the description given above one has $k'=1$ and $\ell'=0$, and $U(F')$ is one dimensional.
Hence there is exactly one generator $\xi'$ in the cone $C(F')$. 
Its image in $C(F)$, again denoted by $\xi'$ can be written
as a linear combination
$$
\xi'=\sum_{i=1}^{k} a_i \xi_i \mbox{ \ \ with \ \ } a_i \in \N \mbox{ \ \ and \ \ }
{\rm ggT}\{a_i; \ a_i\neq 0\}=1.
$$
Let us take up the description of local charts, given in \cite[page 256]{m77}:\vspace{.1cm}
$$
\xymatrix{
D'\ar@/^1.5pc/[rr]^{\subset} \ar[r] \ar[d] & (U(F')_\C \times F') \ar[d]^{\gamma'}  \ar@/^1.5pc/[rr]^\alpha & 
D \ar[r]\ar[d] & (U(F)_\C \times \C^{\ell} \times F) \ar[d]^{\gamma} \\ 
\Gamma'\backslash D' \cap U(F')\ar@{^{(}->}[r] \ar[d]& ({\C^*}\times F')\ar@/_1.5pc/[rr]_{\beta \ \ \ \ \ \ \ \ \ }&
\Gamma\backslash D \cap U(F) \ar@{^{(}->}[r] \ar[d]& ({\C^*}^{k}\times \C^{\ell}\times F)\\
W\ar[rr]^\subset & & \sM 
}
$$ 
Here using the basis $\{\xi_i\}$ the vectorspace $U(F)_\C$ is identified with $\C^k$ and
$$
\gamma(x_1,\ldots, x_k)=(e^{2\pi\sqrt{-1}\cdot x_1},\ldots,e^{2\pi\sqrt{-1}\cdot x_k}),
$$
and the same description holds on the left hand side.

The morphisms $\alpha$ and $\beta$ respect the product decomposition, and 
on the first component $\alpha(x)=(a_1\cdot x,\ldots,a_k\cdot x)$. So writing the coordinates 
on ${\C^*}$ and ${\C^*}^k$ as $z$ and $(z_1,\ldots,z_{k})$ one finds
$\beta(z)=(z^{a_1},\ldots,z^{a_k})$, again neglecting the other components.

As above, local neighborhoods of boundary points $q$ of $\bsM$ are given by certain tuples $(F,\{\xi_i\})$,  and the components of the boundary corresponds to the zero set of some of the first $\mu$ components.
We are only interested in those charts, containing the image $p$ of a general point of $\Delta$.
So in the description of $\xi'$ as a linear combination of the $\xi_i$ we can assume that
$$
\ a_1\geq a_2 \geq \cdots \geq a_{\mu'} > a_{\mu'+1}=\cdots=a_k=0,
$$
for some $1\leq \mu' \leq \mu$. Then the image of each branch of $\bar{W}$ in a neighborhood of $p$ is
parameterized by  $\beta(z)=(z^{a_1},\ldots,z^{a_{\mu'}}, 1, \ldots , 1)$, with
${\rm ggT}\{a_1, \cdots, a_{\mu'}\}=1$. 

If $\mu'=1$ we are done. If $\mu'>1$ we blow up the corresponding stratum of $S_\bsM$. After finitely many steps one finds an embedded resolution such that the proper transform meets the new boundary transversally in the smooth locus.
\end{proof}
\begin{corollary}\label{good2}
Let $\Delta$ be an irreducible component of the divisor 
$$
S_{\bar Z}|_{\psi_i({\bar W}_i)} - (S_{\bar Z}|_{\psi_i({\bar W}_i)})_{\rm red}\in {\rm Div}(\psi_i({\bar W}_i))
$$ 
considered in \eqref{tautchern2}.
Then either $\psi_i(\Delta)$ is contained in the intersection $S_\ell \cap S_k$ of two different components $S_\ell$ and $S_k$ of $S_{\bar Z}$ or $\psi_i(\Delta).\ch_1(\sL)^{d-2}=0$ for the invertible sheaf
$\sL$ introduced above. 
\end{corollary}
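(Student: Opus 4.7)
The plan is to combine a dimension count with Proposition \ref{good}. Recall that, after resolving the indeterminacy of the rational map $\varphi:\bar Z\to \bsM^*$ by some proper birational $\phi:\hat Z\to \bar Z$ admitting a morphism $\hat\varphi:\hat Z\to \bsM^*$, one has $\phi^*\sL=\hat\varphi^*\omega_{\bsM^*}^\gamma$. Since $\omega_{\bsM^*}$ is very ample, the projection formula yields
$$
\psi_i(\Delta).\ch_1(\sL)^{d-2}=\gamma^{d-2}\cdot \hat\varphi_*(\tilde\Delta).\ch_1(\omega_{\bsM^*})^{d-2},
$$
where $\tilde\Delta\subset\hat Z$ denotes the strict transform of $\psi_i(\Delta)$. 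This expression vanishes as soon as $\dim\hat\varphi(\tilde\Delta)<d-2$. Moreover $\psi_i(\Delta)\subset S_{\bar Z}$ (since $\Delta$ appears in the support of $S_{\bar Z}|_{\psi_i(\bar W_i)}$), so $\hat\varphi(\tilde\Delta)\subset \bsM^*\setminus\sM$, a set of dimension at most one.

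For $d\geq 4$ one immediately has $\dim\hat\varphi(\tilde\Delta)\leq 1<d-2$, and the vanishing of the intersection number follows unconditionally. The substance of the corollary is therefore concentrated in the low-dimensional cases $d\in\{2,3\}$. Assume $\psi_i(\Delta)$ is contained in a unique component $S_\ell$ of $S_{\bar Z}$; the goal is to show $\dim\hat\varphi(\tilde\Delta)<d-2$. If not, the corresponding preimage $\Delta'\subset \bar W_i$ maps finitely onto its image in $\bsM^*$, so Proposition \ref{good} applies and produces a blow-up $\bsM_\Delta\to\bsM$ with centers in $S_\bsM$, together with an embedding $\sigma_\Delta:\bar W_i\to \bsM_\Delta$ meeting $S_{\bsM_\Delta}$ transversally at the generic point of $\Delta'$. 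Transversality forces multiplicity one in the scheme-theoretic pullback of the boundary to $\bar W_i$; hence the assumed multiplicity $\geq 2$ at $\Delta$ must arise because $\psi_i(\Delta)$ lies at the crossing of two distinct components of the boundary in the compactification induced by $\bsM_\Delta$. Since Proposition \ref{good}'s centers lie entirely in $S_\bsM$ and leave $\sM$ untouched, these two components lift to two distinct components of $S_{\bar Z}$ passing through $\psi_i(\Delta)$, contradicting the hypothesis.

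The main obstacle is separating, inside Proposition \ref{good}'s blow-up, the case where the two branches providing multiplicity two genuinely lift to two distinct components of $S_{\bar Z}$ from the case where one of them is a purely exceptional divisor contracting to a lower-dimensional center of $\bsM$. In the latter situation $\dim \hat\varphi(\tilde\Delta)$ automatically drops strictly below $d-2$, so the intersection number vanishes directly from the dimension count; thus the desired dichotomy holds in either scenario.
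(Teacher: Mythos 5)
Your opening reduction is fine and agrees with the paper: since $\psi_i(\Delta)$ lies in the boundary and $\dim(\bsM^*\setminus\sM)\leq 1$, the number $\psi_i(\Delta).\ch_1(\sL)^{d-2}$ vanishes whenever the image of $\Delta$ in $\bsM^*$ has dimension $<d-2$, which disposes of $d\geq 4$ and of positive dimensional fibres. The gap is in the remaining cases $d=2,3$. The multiplicity $\geq 2$ you must contradict is the multiplicity of $\Delta$ in $\psi_i^*S_{\bar Z}$, i.e.\ in the pullback of the boundary of $\bar Z$, whereas Proposition \ref{good} only gives transversality of $\sigma_\Delta(\bar W_i)$ with $S_{\bsM_\Delta}$, hence multiplicity one of $\Delta$ in $\sigma_\Delta^*S_{\bsM_\Delta}$. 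To pass from one to the other you need the comparison $\varphi^*S_{\bsM_\Delta}\geq S_{\bar Z}$, which is only available where the rational map $\varphi:\bar Z\dashrightarrow \bsM_\Delta$ is an honest morphism; if $\psi_i(\Delta)$ is a point of indeterminacy, the inequality obtained on a resolution $\Phi:\bar Z'\to \bar Z$ only bounds the multiplicity of $\Delta$ in $\psi_i'^*S_{\bar Z'}$, and $\Phi^*S_{\bar Z}$ can exceed $S_{\bar Z'}$ along the exceptional locus, so no contradiction with multiplicity $\geq 2$ in $\psi_i^*S_{\bar Z}$ follows. In particular your assertion that the multiplicity $\geq 2$ ``must arise because $\psi_i(\Delta)$ lies at a crossing of two boundary components'' ignores the tangency scenario---$\bar W_i$ touching a single smooth branch of $S_{\bar Z}$ to order two---which is exactly what has to be excluded, and which is excluded only once one knows $\varphi$ is a morphism near $\psi_i(\Delta)$.

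Your escape from the exceptional case---``then $\dim\hat\varphi(\tilde\Delta)$ automatically drops strictly below $d-2$''---does not work. For $d=2$ the quantity $\psi_i(\Delta).\ch_1(\sL)^{d-2}$ is the degree of a nonempty zero-cycle and can never vanish, so there is no dimension count to fall back on; for $d=3$, the fact that $\psi'_i(\Delta)$ lies on a $\Phi$-exceptional divisor (or that $\sigma_i(\Delta)$ lies on an exceptional divisor of $\Psi:\bsM_\Delta\to\bsM$) does not force the image of $\Delta$ in $\bsM^*$ to be a point: the hypothesis you are arguing against is precisely that $\Delta\to\bsM^*$ is finite. This is where the paper does its real work. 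It first cuts $\bar Z$ with $d-2$ general members of $|\sL|$ to reduce to a surface and a curve (the case where the cut misses $\Delta$ being covered by your vanishing), and then shows there is in fact \emph{no} indeterminacy over $\psi(\Delta)$: taking the minimal $\Phi$ resolving $\varphi$, the transversality from Proposition \ref{good} makes $\varphi'^*\Omega^1_{\bsM}(\log S_{\bsM})\to\Omega^1_{\bar Z'}(\log S_{\bar Z'})$ surjective near $\psi'(\Delta)$; nefness of the left-hand sheaf forces its image into $\Phi^*\Omega^1_{\bar Z}(\log S_{\bar Z})$, while smoothness of $S_{\bar Z}$ at $\psi(\Delta)$ makes the cokernel of $\Phi^*\Omega^1_{\bar Z}(\log S_{\bar Z})\subset\Omega^1_{\bar Z'}(\log S_{\bar Z'})$ supported on the whole exceptional locus; hence $\Phi$ is trivial there, $\varphi$ is a morphism at $\psi(\Delta)$, and $\varphi^*S_{\bsM_\Delta}\geq S_{\bar Z}$ gives multiplicity one, the desired contradiction. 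Without a replacement for this step your argument does not close.
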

\begin{proof} We will assume that $\psi_i(\Delta)$ is just contained in one component $S_\ell$, and we will show, that its multiplicity in $S_{\bar Z}|_{\psi_i({\bar W}_i)}$ is at most one. To this aim, we use 
Proposition \ref{good} to choose the blowing up $\bsM_\Delta$ of the given Mumford compactification. By abuse of notations we drop the indices ${}_i$ and ${}_\Delta$. 

In order to verify the Corollary \ref{good2}, we also may replace 
$Z$ by the intersection with $d-2$ general divisors $L_1,\ldots,L_{d-2}$ of the invertible sheaf
$\sL$, introduced above, and correspondingly $\bar{W}$ by the intersection of
$\psi^*L_1,\ldots,\psi^*L_{d-2}$. In fact, if $\Delta$ does not meet this intersection, there is nothing to show. In particular, as remarked in \eqref{boundary} this intersection will be trivial if the fibres of $\Delta \to \bsM^*$ are positive dimensional. As stated in \eqref{boundary} this will always be the case for $d\geq 4$, hence $\dim(\Delta)\geq 2$. Remark that the local transversality of the intersection of $\sigma(\bar{W})$ with $S_{\bsM}$ will be preserved under intersection with general $L_i$.

So let us restrict ourselves to the case where $\bar Z$ is a surface and $\bar W$ a curve.
We have rational maps
$$
\begin{CD}
\bar{W} @> \psi >> \bar{Z} @> \varphi >> \bsM, 
\end{CD}
$$
where $\psi$ and $\sigma=\varphi\circ \psi$ are morphisms and where $\Delta$ is a reduced point of
$\sigma^*S_\bsM$. Choose a minimal blowing up $\Phi:\bar{Z}' \to \bar{Z}$ such that the composite 
$\varphi'=\varphi\circ \Phi$ is a morphism. Of course $\psi$ lifts to a morphism $\psi':\bar{W}\to \bar{Z}'$ near $\Delta$. Since 
$$
\sigma^*S_{\bsM} = \psi'^* \varphi'^*S_{\bsM} \geq \psi'^* S_{\bar{Z}'}, 
$$
the multiplicity of $\Delta$ in $\psi'^* S_{\bar{Z}'}$ is again one and so $\varphi'^*S_{\bsM}$ is reduced 
and non singular in a neighborhood of $\psi'(\Delta)$. Since $\varphi'$ is injective away from the boundary there is a neighborhood of $\psi'(\Delta)$ on which the morphism $\varphi'$
is an embedding whose image meets $S_\bsM$ transversally. So the natural map
$$
\varphi'^*\Omega^1_\bsM(\log S_{\bsM})\longrightarrow \Omega^1_{\bar{Z}'}(\log S_{\bar{Z}'})
$$
will be surjective over this neighborhood. On the other hand, the sheaf on the left hand side is nef,
and the same holds true for its image in $\Omega^1_{\bar{Z}'}(\log S_{\bar{Z}'})$.
So the image has to lie in $\Phi^*\Omega^1_{\bar{Z}}(\log S_{\bar Z})$. Since 
we assumed that $\psi(\Delta)$ is a smooth point of the boundary $S_{\bar{Z}}$, the support of the 
cokernel of
$$
\Phi^*\Omega^1_{\bar{Z}}(\log S_{\bar Z}) \subset
\Omega^1_{\bar{Z}'}(\log S_{\bar{Z}'})
$$
contains the whole exceptional locus. So there is no blowing up, $\varphi$ is a morphism, and
$\Delta$ is reduced in $\psi^* S_{\bar{Z}}$.
\end{proof}
\begin{proof}[Proof of Theorem \ref{3.2}] 
We start with parts i) and ii). 

Let us choose morphisms $\phi_i:\hat{W}_i \to \bar{W}_i$ such that the rational map
$\psi_i$ lifts to a morphism $\hat{\psi}_i:\hat{W}_i \to \bar{Z}$.
Since $\check{N}_{{\bar W}_i/{\bar Z}}\hookrightarrow{\phi_i}_*\phi_i^*\check{N}_{{\bar W}_i/{\bar Z}}
\hookrightarrow \check{N}_{{\bar W}_i/{\bar Z}}^\natural$ one obtains
$$
\deg_{\hat{\psi}_i^*\sL}(\phi_i^* \check{N}_{{\bar W}_i/{\bar Z}}/_{\rm torsion})=
\deg_{\psi_i^*\sL}({\phi_i}_*\phi_i^*\check{N}_{{\bar W}_i/{\bar Z}})=\deg(\check{N}_{\bar{W}_i/\bar{Z}})..
$$
As $\varphi|_Z$ is an embedding, the Assumptions made in \ref{assSect3} hold and Theorem \ref{1.6}, i) and ii) and the projection formula imply that for all $i\in I$ 
$$
\deg_{\hat{\psi}_i^*\sL}(\phi_i^* \check{N}_{{\bar W}_i/{\bar Z}}/_{\rm torsion})<0.
$$
\begin{claim}\label{large}
If $\dim(Z)=d\geq 4$ then $\deg_{\hat{\psi}_i^*\sL}(\phi_i^* \check{N}_{{\bar W}_i/{\bar Z}}/_{\rm torsion})
=(\hat{\psi}_i({\hat W}_i))^2.\ch_1(\sL)^{d-2}$. In particular $(\hat{\psi}_i({\hat W}_i))^2.\ch_1(\sL)^{d-2}<0$.
\end{claim}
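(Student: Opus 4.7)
The plan is to apply formula \eqref{tautchern2} to the codimension-one embedding $\psi_i \colon \bar{W}_i \hookrightarrow \bar{Z}$, with the invertible sheaf $\sL$ playing the role of $L$, and then to show that the boundary correction appearing on its right-hand side vanishes when intersected with $\ch_1(\sL)^{d-2}$, provided $d \geq 4$. First I would reduce from $\hat{W}_i$ back down to $\bar{W}_i$: as already noted just before the claim,
$$
\deg_{\hat{\psi}_i^*\sL}(\phi_i^* \check{N}_{\bar{W}_i/\bar{Z}}/_{\rm torsion}) = \deg_{\psi_i^*\sL}(\check{N}_{\bar{W}_i/\bar{Z}}),
$$
and since $\psi_i$ is an injective morphism of smooth varieties with $\dim \bar{Z} = \dim \bar{W}_i + 1$, the hypotheses of \eqref{tautchern2} are met with $\deg(\psi_i)=1$.

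Plugging $L=\sL$ into \eqref{tautchern2} yields
$$
\ch_1(\check{N}_{\bar{W}_i/\bar{Z}}).(\psi_i^*\sL)^{d-2} = \psi_i(\bar{W}_i)^2.\sL^{d-2} - \bigl(S_{\bar{Z}}|_{\psi_i(\bar{W}_i)} - (S_{\bar{Z}}|_{\psi_i(\bar{W}_i)})_{\rm red}\bigr).\sL^{d-2}.
$$
Since $\hat{\psi}_i(\hat{W}_i)=\psi_i(\bar{W}_i)$ as cycles in $\bar{Z}$, the main term is already exactly the right-hand side of the claim. So everything reduces to showing that the correction term vanishes.

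For this I would apply Corollary \ref{good2} to each component $\Delta$ of $S_{\bar{Z}}|_{\psi_i(\bar{W}_i)} - (S_{\bar{Z}}|_{\psi_i(\bar{W}_i)})_{\rm red}$. The corollary supplies a dichotomy: either $\psi_i(\Delta).\ch_1(\sL)^{d-2}=0$ directly, or else $\psi_i(\Delta)$ lies in the intersection $S_\ell \cap S_k$ of two distinct irreducible components of $S_{\bar{Z}}$. In the second case $\varphi(\psi_i(\Delta))\subset \varphi(S_\ell \cap S_k)$ is contained in the boundary $\bsM^*\setminus\sM$ of the Baily-Borel compactification, whose dimension is at most one by \cite{Lo03}. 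Because $\sL = \varphi^*\omega_{\bsM^*}^\gamma$, the restriction $\sL|_{\psi_i(\Delta)}$ is pulled back from a subscheme of dimension $\leq 1$, so its $(d-2)$-fold self-intersection on the $(d-2)$-dimensional variety $\psi_i(\Delta)$ vanishes as soon as $d-2>1$, i.e.\ $d\geq 4$. This kills the correction term and yields the claim.

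The only genuinely nontrivial input is Corollary \ref{good2}, whose proof rests on the toroidal construction in Proposition \ref{good}; granting that, the argument is a clean dimension count. The precise role of the hypothesis $d \geq 4$ becomes transparent: it is exactly the strict inequality $d-2 > \dim(\bsM^*\setminus \sM)$ needed to force the top self-intersection of a pulled-back line bundle to vanish. This is also why the alternative larger constant $\varsigma(\bar{Z})=(\rho+\delta)^2+\rho+\delta+1$ is required in dimensions $d=2,3$ when the $\Delta$'s are not already excluded by conditions (2) or (3) in the definition of $\varsigma(\bar{Z})$: in those lower dimensions the correction term need not vanish and must be absorbed into the boundary count $\delta$ before Lemma \ref{2.1} can be applied.
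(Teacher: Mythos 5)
Your argument is correct and is essentially the paper's own proof: you apply \eqref{tautchern2} to the codimension-one embedding $\psi_i\colon \bar{W}_i\to \bar{Z}$ with $L=\sL$ and kill the excess term $\bigl(S_{\bar Z}|_{\psi_i(\bar W_i)}-(S_{\bar Z}|_{\psi_i(\bar W_i)})_{\rm red}\bigr).\ch_1(\sL)^{d-2}$ by the fact that $\sL$ restricted to any boundary component is pulled back from the at most one-dimensional Baily--Borel boundary, which is precisely \eqref{boundary} and forces the vanishing once $d-2\geq 2$. The only (harmless) detour is your appeal to Corollary \ref{good2}: its dichotomy is not needed here, since the dimension count you use in its second branch applies verbatim to every component of the excess divisor, which is how the paper argues, with Corollary \ref{good2} reserved for the low-dimensional cases $d\leq 3$ treated in Claim \ref{modification}.
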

\begin{proof}
By \eqref{boundary} for any component $\Delta_j$ of $S_{{\bar W}_i}$ one has $\Delta_j.\ch_1(\sL)^{d-2}=0$, and the Claim \ref{large} follows from
\eqref{tautchern2}. 
\end{proof}
Unfortunately, for $d\leq 3$ the corresponding equality is only guaranteed under the additional assumption made in the Notations \ref{3.2n}, (2).
\begin{claim}\label{cases}
Assume that $\# I \geq \rho^2 + \rho + 1$. Then one of the following conditions hold:
\begin{enumerate}
\item[a.] There exists a linear combination 
$$
D=\sum_{i\in I} a_i\cdot \hat{\psi}_i({\hat W}_i),
$$
with $a_i\geq 0$, such that $D^2.\ch_1(\sL)^{d-2}>0$.
\item[b.] For all $i\in I$ the intersection number $(\hat{\psi}_i({\hat W}_i))^2.\ch_1(\sL)^{d-2}\leq 0$, and
for some $\iota\in I$ and all $i\in I$ the intersection numbers $(\hat{\psi}_\iota({\hat W}_\iota)).(\hat{\psi}_i({\hat W}_i)) .\ch_1(\sL)^{d-2}= 0$. 
\end{enumerate}
\end{claim}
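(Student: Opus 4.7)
The plan is to apply Lemma \ref{2.1} with $X = \bar Z$ and with the prime divisors $Y_i := \hat\psi_i(\hat W_i)$ for $i \in I$, letting the nef class playing the role of $A^{d-2}$ be $\ch_1(\sL)^{d-2}$. The crucial numerical input is that $\sL = \varphi^*\omega_\bsM(S_\bsM)^\gamma$ is both nef and big on $\bar Z$: nefness is the global generation recorded in the paragraph preceding Claim \ref{large}, while bigness follows because $\varphi|_Z\colon Z\hookrightarrow\sM$ is an embedding, so $\sL|_Z$ is ample and hence big on the dense open $Z\subset \bar Z$. Note also that the $Y_i$ are pairwise distinct irreducible divisors on $\bar Z$.

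Now I would argue by contradiction. Assuming (a) fails, the choice $D=Y_i$ already forces $Y_i^2.\ch_1(\sL)^{d-2}\le 0$ for every $i\in I$, which is the first half of (b). If these inequalities were all strict, Lemma \ref{2.1} combined with the hypothesis $\#I\ge \rho^2+\rho+1$ would yield a non-negative integral combination $D=\sum a_iY_i$ with $D^2.\ch_1(\sL)^{d-2}>0$, contradicting the failure of (a). Hence there is some $\iota\in I$ with $Y_\iota^2.\ch_1(\sL)^{d-2}=0$.

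It remains to show that this particular $\iota$ is orthogonal to every $Y_i$. For $i\neq\iota$ the divisors $Y_\iota$ and $Y_i$ are distinct primes, so $Y_\iota.Y_i$ is an effective codimension-two cycle and $Y_\iota.Y_i.\ch_1(\sL)^{d-2}\ge 0$ by the nefness of $\sL$. If the inequality were strict, then the combination $D_m=mY_\iota+Y_i$ would satisfy
$$
D_m^2.\ch_1(\sL)^{d-2}=2m\,Y_\iota.Y_i.\ch_1(\sL)^{d-2}+Y_i^2.\ch_1(\sL)^{d-2},
$$
which is positive for $m$ large enough, again placing us in case (a). This contradicts the failure of (a), so $Y_\iota.Y_i.\ch_1(\sL)^{d-2}=0$ for all $i\in I$, giving (b).

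The main obstacle I foresee is the verification that $\sL$ is nef and big on all of $\bar Z$ rather than merely on $Z$, since $\varphi$ is only rational and $\sL$ is defined by the maximal-extension convention of \ref{del}. One has to check that pulling back the very ample sheaf on the Baily--Borel compactification along any resolution of $\varphi$ provides enough global sections on $\bar Z$ to conclude semi-ampleness, and that bigness survives the contractions of the exceptional locus; both are handled by Mumford's argument invoked in the paragraph before Claim \ref{large}.
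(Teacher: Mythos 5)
Your proof is correct and follows essentially the same route as the paper: the cases (some self-intersection positive, a zero self-intersection with a positive cross term handled by $\alpha Y_\iota + Y_i$, all self-intersections negative handled by Lemma \ref{2.1}, and the residual case giving b)) are exactly the paper's case analysis, merely recast as an argument by contradiction. The only extra material is your discussion of nefness and bigness of $\sL$, which the paper has already settled in the paragraph preceding the claim.
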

\begin{proof}
If $(\hat{\psi}_i({\hat W}_i))^2.\ch_1(\sL)^{d-2}>0$ for some $i\in I$ the condition a) obviously holds true. 
If there are two indices $i$ and $j$ with 
$$
(\hat{\psi}_\iota({\hat W}_\iota)).(\hat{\psi}_i({\hat W}_i)) .\ch_1(\sL)^{d-2}> 0\mbox{ \ \  and \ \ } (\hat{\psi}_\iota ({\hat W}_\iota ))^2.\ch_1(\sL)^{d-2}= 0
$$ 
one can choose $D=\alpha\cdot \hat{\psi}_\iota({\hat W}_\iota) + \hat{\psi}_i({\hat W}_i)$ for $\alpha \gg1$, and again a) holds. 

If $(\hat{\psi}_i({\hat W}_i))^2.\ch_1(\sL)^{d-2} < 0$ for all $i \in I$, one can use Theorem
\ref{2.1} to verify a).

All the remaining cases are covered by b).
\end{proof}
\begin{claim}\label{modification}
Assume that we are in case b) in Claim \ref{cases}. Then there exists a blowing up
$\Phi:\bar{Z}' \to \bar{Z}$, which satisfies again the assumptions made in \ref{assSect4}, and 
for which one is in case a).
\end{claim}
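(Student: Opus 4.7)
The plan is to take $\Phi:\bar Z'\to \bar Z$ to be the successive blowing-up along the codimension-two strata $S_\ell\cap S_k$ of the snc boundary divisor $S_{\bar Z}$; by definition there are exactly $\delta=\delta(S_{\bar Z})$ such strata, so the Picard number satisfies $\rho(\bar Z')=\rho(\bar Z)+\delta$. First I would check that $\bar Z'$ still satisfies the assumptions of \ref{assSect4}. The centres are smooth strata of an snc divisor, so the reduced total transform $S_{\bar Z'}$ is again snc, and the standard identity $\Phi^*\Omega^1_{\bar Z}(\log S_{\bar Z})\simeq \Omega^1_{\bar Z'}(\log S_{\bar Z'})$ carries over both the nefness of the log cotangent sheaf and the ampleness of $\omega_{\bar Z'}(S_{\bar Z'})$ relative to $Z'=Z$. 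Since the Higgs data along each $W_i$ and the log normal sheaf $\check N_{\bar W_i/\bar Z}$ are unchanged on the common open part $Z$, the HHP equalities assumed in Theorem \ref{3.2} persist for the proper transforms $\bar W_i'\subset \bar Z'$, and $\#I\geq \varsigma(\bar Z)=(\rho+\delta)^2+\rho+\delta+1=\rho(\bar Z')^2+\rho(\bar Z')+1$.

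Next I would establish strict negativity of the $\ch_1(\sL)^{d-2}$-self-intersection of each $\hat\psi_i'(\hat W_i')$ on $\bar Z'$. By formula \eqref{tautchern2},
\begin{equation*}
\deg_{\hat\psi_i^{\prime *}\sL}\bigl(\phi_i^*\check N_{\bar W_i'/\bar Z'}/_{\rm torsion}\bigr)
=\deg(\hat\psi_i')\cdot \hat\psi_i'(\hat W_i')^2.\ch_1(\sL)^{d-2}-R_i.\ch_1(\sL)^{d-2},
\end{equation*}
where $R_i=S_{\bar Z'}|_{\psi_i'(\bar W_i')}-(S_{\bar Z'}|_{\psi_i'(\bar W_i')})_{\rm red}$. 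After the blowup no component of the proper transform $\psi_i'(\bar W_i')$ lies inside a pairwise intersection of components of $S_{\bar Z'}$ (any such coincidence on $\bar Z$ has been separated by blowing up the corresponding double stratum, and the new exceptional divisors are $\mathbb P^1$-bundles over lower-dimensional boundary strata, hence their pairwise intersections contract under the map to $\bsM^*$). By Corollary \ref{good2} each component $\Delta$ of $R_i$ must therefore satisfy $\Delta.\ch_1(\sL)^{d-2}=0$, so the correction term vanishes and, exactly as in the proof of Claim \ref{large},
\begin{equation*}
\hat\psi_i'(\hat W_i')^2.\ch_1(\sL)^{d-2}=\frac{1}{\deg(\hat\psi_i')}\deg_{\hat\psi_i^{\prime *}\sL}\bigl(\phi_i^*\check N_{\bar W_i'/\bar Z'}/_{\rm torsion}\bigr)<0
\end{equation*}
by Theorem \ref{1.6}.

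With these negative self-intersections and $\#I\geq \rho(\bar Z')^2+\rho(\bar Z')+1$, Lemma \ref{2.1} applied to $\bar Z'$ with $A=\ch_1(\sL)$ produces a positive linear combination $D=\sum_{i\in I}a_i\,\hat\psi_i'(\hat W_i')$ with $D^2.\ch_1(\sL)^{d-2}>0$, which is precisely case a) of Claim \ref{cases} for the new compactification.

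The main obstacle will be the bookkeeping in the middle step: one must show that after blowing up the $\delta$ double-point strata, every component $\Delta$ of the excess divisor $R_i$ truly falls into the second alternative of Corollary \ref{good2}. This amounts to checking that the proper transform $\bar W_i'$ no longer has any boundary component mapping into a codimension-two stratum of $S_{\bar Z'}$, and that components of $R_i$ introduced by the blowup (coming from tangencies of $\bar W_i$ with the blown-up centres) contract to points in the Baily–Borel compactification $\bsM^*$, so that $\ch_1(\sL)|_\Delta\equiv 0$ by \eqref{boundary}. Everything else in the argument is a direct combination of the preparatory results already established.
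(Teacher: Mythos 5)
Your overall skeleton (blow up the double strata $S_\ell\cap S_k$, use $\Omega^1_{\bar Z'}(\log S_{\bar Z'})=\Phi^*\Omega^1_{\bar Z}(\log S_{\bar Z})$ to see that the assumptions of \ref{assSect4} persist, invoke the enlarged bound $\#I\geq(\rho+\delta)^2+\rho+\delta+1$, and feed strictly negative self-intersections into Lemma \ref{2.1}) is the same as the paper's. But the middle step --- exactly the one you flag as ``the main obstacle'' --- is a genuine gap, not bookkeeping. You claim that after this single round of blow-ups no component of the excess divisor $R_i=S_{\bar Z'}|_{\psi_i'(\bar W_i')}-(S_{\bar Z'}|_{\psi_i'(\bar W_i')})_{\rm red}$ lies in a pairwise intersection of components of $S_{\bar Z'}$, hence $R_i.\ch_1(\sL)^{d-2}=0$ and the self-intersection equals (up to the degree of the map) $\deg\check N_{\bar W_i/\bar Z}<0$. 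This is not true in general: blowing up $S_\ell\cap S_k$ once does not separate $\bar W_i$ from the double locus of the new boundary, since the exceptional divisor meets the proper transforms of $S_\ell$ and $S_k$ along new double strata, and a $W_i$ meeting $S_\ell\cap S_k$ with higher-order contact (say tangent to one branch) has proper transform still passing through such a stratum. Forcing transversality would need an iterated embedded resolution as in Proposition \ref{good}, and those extra blow-ups would raise $\rho(\bar Z')$ beyond $\rho+\delta$ and destroy the count $\#I\geq\rho(\bar Z')^2+\rho(\bar Z')+1$ needed for Lemma \ref{2.1}. Your fallback --- that the new double strata contract in $\bsM^*$, so $\Delta.\ch_1(\sL)^{d-2}=0$ --- cannot work when $d=2$, a case that does occur here (case b) forces $d\leq 3$): then $\Delta$ is a zero-cycle and $\Delta.\ch_1(\sL)^{d-2}=\deg\Delta>0$ whenever $R_i\neq 0$.

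The paper proves something weaker and never claims the correction term vanishes on $\bar Z'$. It restricts to a general surface $X$ cut out by $d-2$ sections of $\sL$ and uses that self-intersections can only decrease under the blow-up, so divisors with strictly negative self-intersection stay strictly negative; and for a $W_i$ with $(\hat\psi_i(\hat W_i))^2.\ch_1(\sL)^{d-2}=0$, formula \eqref{tautchern2} together with $\deg\check N_{\bar W_i/\bar Z}<0$ (Theorem \ref{1.6}) forces some component $\Delta$ of the excess divisor to satisfy $\hat\psi_i(\Delta).\ch_1(\sL)^{d-2}>0$; Corollary \ref{good2} then places $\hat\psi_i(\Delta)$ inside one of the double strata being blown up, so a point of $\hat\psi_i(\hat W_i)|_X$ lies on a blown-up center and the self-intersection strictly drops below zero. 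This yields strict negativity for all $i$ on $\bar Z'$ with no statement about $R_i$ on $\bar Z'$, after which Lemma \ref{2.1} applies exactly as in your last step. If you replace your vanishing claim by this monotonicity-plus-strict-drop argument, the rest of your write-up goes through.
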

\begin{proof}
We choose $\Phi:\bar{Z}' \to \bar{Z}$ to be the successive blowing up of the non-empty intersections
$S_\ell\cap S_k$ of different components $S_\ell$ and $S_k$ of $S_{\bar Z}$.

By Claim \ref{large} the condition (1) in the Notations \ref{3.2n} excludes the case b) in Claim 
\ref{cases}. For the other two conditions (2), and (3) stated there, the same is obvious.
So the definition of $\varsigma(\bar{Z})$ and the assumptions made in Theorem \ref{3.2}, i) and ii)
say that 
$$
\# I \geq (\rho+\delta)^2 + \rho + \delta + 1=\rho(\bar{Z}')^2 + \rho(\bar{Z})' + 1.
$$
Of course we may assume that $\phi_i:\hat{W}_i \to \bar{W}_i$ is chosen such that $\hat{\psi}_i:\hat{W}_i\to \bar{Z}$ factors through $\hat{\psi}'_i:\hat{W}_i\to \bar{Z}'$.

As we had seen already in the proof of Proposition \ref{good} the divisor $S_{{\bar Z}'}={\bar Z}'\setminus Z=\Phi^{-1}(S_{\bar Z})$ is still a normal crossing divisor and
\begin{equation*}
\Omega^1_{{\bar Z}'}(\log S_{{\bar Z}'})=\Phi^*\Omega^1_{{\bar Z}}(\log S_{{\bar Z}}).
\end{equation*}
So $\bar{Z}$ again satisfies the assumptions made in \ref{assSect4}.

Let $X \subset {\bar Z}$ be the smooth surface obtained by intersecting $d-2$ zero-divisors of general sections of $\sL$ and let $X'$ be its preimage in $\bar{Z}'$. Then for all $i\in I$
$$
(\hat{\psi}_i({\hat W}_i))^2.\ch_1(\sL)^{d-2}=(\hat{\psi}_i({\hat W}_i)|_X)^2  \geq 
(\hat{\psi}'_i({\hat W}_i)|_{X'})^2.
$$
If this is an equality, none of the points lying on $\hat{\psi}_i({\hat W}_i)$ is blown up.

On the other hand, if $\bar{W}_i$ is one of the divisors with 
$(\hat{\psi}_i({\hat W}_i))^2.\ch_1(\sL)^{d-2} = 0$, hence with $(\hat{\psi}_i({\hat W}_i)|_X)^2=0$, 
then for some component $\Delta$ of $\hat{\psi}_i^* S_{\bar{Z}} - S_{{\hat W}_i}$ one has
$\hat{\psi}_i(\Delta).\ch_1(\sL)^{d-2} > 0$. By Corollary \ref{good2} $\hat{\psi}_i(\Delta)$ contains at least one of the intersections $S_{\ell,k}$, hence its restriction to $X$ is blown up.

Then for all $i\in I$ one finds $(\hat{\psi}'_i({\hat W}_i))^2.\ch_1(\Phi^*\sL)^{d-2}<0$ and the last condition in Claim \ref{cases}, b) is violated for $i=\iota$. 
\end{proof}
From now on, we will replace $\bar{Z}'$ by $\bar{Z}$ and assume by abuse of notations that
we are in case a) in Claim \ref{cases}, hence for some effective linear combination $D$ of the
$\psi_i(\bar{W}_i)$ we have $D^2.\ch_1(\sL)^{d-2}>0$.

Consider the saturated Higgs subsheaf $(F,\theta) \subset (E_{\bar Z},\theta_{\bar Z})$, which is generated by $E^{2,0}_{\bar Z}$, as described in \eqref{FHiggs1} and \eqref{FHiggs2}. 
Since $(F,\theta) \subset (E_{{\bar Z}},\theta_{{\bar Z}})$ is a Higgs subbundle of a Higgs bundle arising from a variation of Hodge structures with logarithmic singularity along $S_{{\bar Z}}$, the sheaf $\det(F)$ is negative semi-definite in the sense that the curvature of the Hodge metric is negative semi-definite. 

Then by the projection formula
$$
\deg_{\hat{\psi}_i^*\sL}(\phi_i^*\psi_i^*F/_{\rm torsion})=
\deg_{\psi_i^*\sL}({\phi_i}_*\phi_i^*(\psi_i^*F/_{\rm torsion})).
$$
Since $\psi_i^*F\hookrightarrow{\phi_i}_*\phi_i^*(\psi_i^*F/_{\rm torsion}) \hookrightarrow
(\psi_i^*F)^\natural$, it follows from Addendum \ref{3.1} that this degree is zero. The projection formula
implies that 
$$
\ch_1(F).\hat{\psi}_i(\hat{W}_i).\ch_1(\sL)^{d-2}=0 \mbox{ \ \ hence \ \ }
\ch_1(F).D.\ch_1(\sL)^{d-2}=0. 
$$
Let again $X \subset {\bar Z}$ be the smooth surface obtained by intersecting $d-2$ zero-divisors of general sections of $\sL$. Then  $\ch_1(F|_X) . D\cap X =0$. Since 
$$
(D\cap X)^2 > 0 \mbox{ \ \ and \ \ } (\det F|_X)^2 \geq 0,
$$
the Hodge index theorem implies that $\ch_1(\det F|_X)=0$.

Since $\ch_1(F)$ is represented by a negative semi-definite Chern form the latter implies that
$\ch_1(F)=0$. By Simpson's poly-stability the Higgs subbundle $(F,\theta)\subset (E,\theta)$ is a direct factor. Its complement has a trivial Higgs bundle, hence it is induced by a unitary local subsystem.
By Lemma \ref{decomposition} we obtain i) and ii).\\[.2cm]
In Case iii) $\psi_i:\bar{W}_i \to  \bar{Z}$ are morphisms. By assumption
$$
(\psi_i \bar{W}_i)^2 = \deg(\check{N}_{\bar{W}_i/{\bar Z}})=0 \mbox{ \ \ and \ \ }
\psi_1(\bar{W}_1) . \psi_2(\bar{W}_2)>0.
$$
Part iii) of the Addendum \ref{3.1} implies that 
$$
\ch_1( F). (\psi_1(\bar{W}_1)+\psi_2(\bar{W}_2))=\ch_1(F|_{\psi_1(\bar{W}_1)})+\ch_1(F|_{\psi_2(\bar{W}_2)})=0.
$$
Since $\ch_1( F)$ is negative semi-definite and since $(\psi_1(\bar{W}_1)+\psi_2(\bar{W}_2))^2>0$ the Hodge index theorem tells us that $\ch_1(F)=0$. As before this implies that $(F,\theta)$ is a direct factor of 
$(E_{\bar Z},\theta_{\bar Z})$ and $Z\subset \sM$ is a Shimura surface of Hodge type. 

By Theorem \ref{1.6}, iii), the Griffiths-Yukawa coupling on ${\bar Z}$ does not vanish.
Thus, $Z$ is a generalized Hilbert modular surface, necessarily rigid. 
$Z$ can not be a genuine Hilbert modular surface, since $Z$ contains Shimura curves with vanishing Griffiths-Yukawa coupling. \end{proof}

\begin{proof}[Proof of Theorems \ref{HHP0R} and \ref{CharShimura}]
As we have seen in the proof of Theorem \ref{HHP0} at the end of Section \ref{HHPR}, 
on a surface $\bar{Y}=\bar{Z}$ the equality of Chern numbers
\eqref{tautchern} implies that the inequalities i), ii) and iii) in Theorem \ref{1.6}
coincide with the inequalities \eqref{eqp4}, \eqref{eqp5} and \eqref{eqp6} in Theorem
\ref{HHP0R}. For the same reason, Theorem  \ref{CharShimura} is just a special case of
Theorem \ref{3.2}.
\end{proof}

\end{document}